\newcommand{\K}{\mathbf{K}}
\newcommand{\LS}{\operatorname{LS}}
\newcommand{\lea}{\leap{\K}}
\newcommand{\Ii}{\mathbb{I}}
\theoremstyle{definition}
\newtheorem{mydef}{Definition}[section]
\newtheorem{lem}[mydef]{Lemma}
\newtheorem{thm}[mydef]{Theorem}
\newtheorem{cor}[mydef]{Corollary}
\newtheorem{question}[mydef]{Question}
\newtheorem{hypothesis}[mydef]{Hypothesis}
\newtheorem{defin}[mydef]{Definition}
\newtheorem{remark}[mydef]{Remark}
\newtheorem{fact}[mydef]{Fact}
\newtheorem{prob}[mydef]{Problem}
\newtheorem*{thm1}{Theorem \ref{++}}
\newtheorem*{thm2}{Theorem \ref{main-2}}
\newtheorem*{thm3}{Theorem \ref{*}}
\newtheorem*{thm4}{Theorem \ref{trans}}
\newtheorem*{sec-1}{Shelah's categoricity conjecture}
\newcommand{\leap}[1]{\le_{#1}}
\newcommand{\fct}[2]{{}^{#1}#2}
\newcommand{\gS}{\mathbf{S}}
\newcommand{\rest}{\upharpoonright}
\newcommand{\concat}{%
  \mathord{
    \mathchoice
    {\raisebox{1ex}{\scalebox{.7}{$\frown$}}}
    {\raisebox{1ex}{\scalebox{.7}{$\frown$}}}
    {\raisebox{.5ex}{\scalebox{.5}{$\frown$}}}
    {\raisebox{.5ex}{\scalebox{.5}{$\frown$}}}
  }
}
\newcommand{\s}{\mathfrak{s}}
\newbox\noforkbox \newdimen\forklinewidth
\noforkbox\hbox{\lower 2pt\box1\lower
2pt\box0\relax}
\def\unionstick{\mathop{\copy\noforkbox}\limits}
\def\nf{\unionstick}
\newbox\doesforkbox
\doesforkbox\hbox{\lower 0pt\box1 \lower
2pt\box2\lower2pt\box0\relax}
\def\1nf{\unionstick^{(1)}}
\def\2nf{\unionstick^{(2)}}
\def\3nf{\unionstick^{(3)}}
\newcommand{\dnf}{\unionstick}
\def\forkindep{\mathrel{\raise0.2ex\hbox{\ooalign{\hidewidth$\vert$\hidewidth\cr\raise-0.9ex\hbox{$\smile$}}}}}
\newcommand{\indep}[4]{#2 \overset{#4}{\underset{#1}{\forkindep}}  #3}
\newcommand{\type}{\mathbf{gtp}}
\newcommand{\gtp}{\mathbf{gtp}}
\title{On Stability and Existence of Models in Abstract Elementary Classes}
\date{\today\\
AMS 2020 Subject Classification: Primary:  03C48. Secondary:  03C45, 03C52, 03C55.} 
\keywords{Abstract elementary classes; Classification theory; Stability; Good frames; Splitting}
\author{Marcos Mazari-Armida}
\email{marcos\_mazari@baylor.edu}
\urladdr{https://sites.baylor.edu/marcos\_mazari/}
\address{Department of Mathematics \\ Baylor University \\ Waco, Texas, USA}
\thanks{The first author's research  was partially supported by an NSF grant DMS-2348881, a grant from the Simons Foundation (Travel Support for Mathematicians) and an AMS-Simons Travel Grant 2022--2024.}
\author{Sebastien Vasey}
\email{vasey.sebastien@gmail.com}
\urladdr{https://svasey.com}
\author{Wentao Yang}
\email{ndwyang@imu.edu.cn}
\urladdr{https://wen-tao-y.github.io}
\address{School of Mathematical Sciences \\ Inner Mongolia University \\ Hohhot, Inner Mongolia, China}
\begin{document}

\begin{abstract} 
For an abstract elementary class $\K$ and a cardinal $\lambda \geq \LS(\K)$, we prove under mild cardinal arithmetic assumptions, categoricity in two succesive cardinals, almost stability for $\lambda^+$-minimal types and continuity of splitting in $\lambda$,  that stability in $\lambda$ is equivalent to the existence of a model in $\lambda^{++}$. The forward direction holds without any cardinal or categoricity  assumptions, this result improves both \cite[12.1]{vaseyt} and \cite[3.14]{may}. 

Moreover, we prove a categoricity theorem for abstract elementary classes with weak amalgamation and tameness under mild structural assumptions in $\lambda$. A key feature of this result is that we do not assume amalgamation or arbitrarily large models. 


\end{abstract}

\maketitle
 
\tableofcontents
\section{Introduction}
Abstract elementary classes (AECs for short) were introduced by Shelah \cite{sh87a} as a model theoretic framework to study non-elementary classes. The two most important test problems in Shelah's program of \emph{classification theory for abstract elementary classes} \cite{shelahaecbook, grossberg2002} are: Grossberg's question on existence \cite[Problem (5), p. 34]{sh576} and Shelah's categoricity conjecture \cite{sh87a}, \cite[6.13.(3)]{sh704}.

Grossberg's question on existence, which generalizes Baldwin's question \cite{friedman}\footnote{Solved by Shelah for $PC_{\aleph_0}$ classes in\cite{Sh:88}.} to AECs, is the following:

\begin{prob}
Let $\mathbf K$ be an AEC and $\lambda\geq LS(\mathbf K)$ be an infinite cardinal. If $\mathbf K$ is categorical in $\lambda$ and $\lambda^+$, must $\mathbf K$ have a model  of cardinality $\lambda^{++}$?
\end{prob}

There are many partial solutions to this problem. An updated account of the state of Grossberg's question on existence can be consulted in the introduction of \cite{may}. Recently, there are many results where stability or superstability-like independence notions, with fewer or no categoricity assumptions, have been used to show the existence of larger models  \cite[\S II.4.13.3]{shelahaecbook}, \cite[3.1.9]{JaSh875},  \cite[8.9]{vasey-independence},  \cite[12.1]{vaseyt}, \cite[4.2]{m1}, \cite[3.11, 3.14]{may}. In this paper, we show that stability is enough to construct a model of cardinality $\lambda^{++}$ assuming no maximal models in $\lambda$, amalgamation in $\lambda$ and continuity of splitting in $\lambda$. \emph{Continuity of splitting}\footnote{Continuity of splitting has also been called \emph{continuity of non-splitting} in for example\cite{bv} and \cite{leu2}. In this paper, we use \emph{continuity of splitting}   following  \cite{vaseyt}.} (see Definition \ref{d-con}) is a mild assumption on  Galois types which follows from a weak locality condition on Galois types (see Lemma \ref{loc->con} and Remark \ref{exam}) or from superstability (see Lemma \ref{good-split} and Remark \ref{r-g-s}). In particular, continuity of splitting holds for $(<\aleph_0)$-tame AECs which extend elementary classes.

\begin{thm1}
Let $\K$ be an AEC and let $\lambda \ge \LS(\K)$. If  $\K$ has amalgamation in $\lambda$,  no maximal model in $\lambda$ and is stable in $\lambda$, and  splitting is continuous in $\lambda$, then $\K$ has a model in $\lambda^{++}$. 

\end{thm1}

Theorem \ref{++} improves both  \cite[12.1]{vaseyt} and \cite[3.14]{may}. Theorem \ref{++} improves  Vasey's result \cite[12.1]{vaseyt}, regarding existence of a model in $\lambda^{++}$, as we replace the categoricity and tameness assumptions by the weaker assumption of continuity of splitting.\footnote{Observe that $\lambda$-superstability in the sense of \cite[4.23]{vaseyt} implies continuity of splitting in $\lambda$ (see Remark \ref{r-g-s}).} Theorem \ref{++} improves Mazari-Armida's and Yang's result \cite[3.14]{may} as we replace the locality assumption  on Galois types by the weaker assumption of continuity of splitting and we drop the assumption that $\lambda < 2^{\aleph_0}$.  Furthermore, Theorem \ref{++} significantly improves \cite[\S II.4.13.3]{shelahaecbook} for AECs with the additional assumption of continuity of splitting in $\lambda$. We replace the existence of a good $\lambda$-frame which is a superstability-like independence notion by the weaker assumption of stability in $\lambda$.\footnote{Observe that the existence of good $\lambda$-frame implies stability in $\lambda$ by \cite[\S II.4.2]{shelahaecbook}.} The additional assumption of continuity of splitting in $\lambda$ is a minor assumption in the context of \cite[\S II.4.13.3]{shelahaecbook} as it follows from the existence of a type-full good $\lambda$-frame (see Lemma \ref{good-split}).

Theorem \ref{++} reduces to showing that the class has no maximal model in $\lambda^+$. We achieve this by building  a coherent sequence of types which does not split over a base model. We use continuity of splitting  to carry out the construction at limit stages. The limit of the sequence is a non-algebraic type and hence the AEC has no maximal models in $\lambda^+$.

In this paper, we further show that stability is necessary to construct a model of cardinality $\lambda^{++}$ under mild cardinal arithmetic assumptions, categoricity in two succesive cardinals and almost stability for $\lambda^+$-minimal types. Observe that this result does not assume continuity of splitting.

\begin{thm2} Suppose that $\lambda^+ < 2^{\lambda} < 2^{\lambda^+}$. Let $\K$ be an AEC and let $\lambda \ge \LS(\K)$.  Assume $\K$ is categorical in $\lambda$ and $\lambda^+$, $\K_{\lambda^{++}} \neq \emptyset$ and $|\gS^{\neg\lambda^+-min}(M)|\leq \lambda^+$ for the unique model $M\in \K_\lambda$. Then $\K$ is stable in $\lambda$.
\end{thm2}

When $\K$ is an elementary elementary class, Morley, in an intermediate step of proving his categoricity theorem \cite{morley}, showed that categoricity in an uncountable cardinal implies stability in $\aleph_0$. There are similar results for AECs assuming arbitrarily large models, see for example \cite{sh394}, \cite{vasey-independence} and \cite{BGVV}.  Theorem \ref{main-2} is a local version of these results for abstract elementary classes. A key feature of our result is that we do not assume arbitrarily large models.

 There are not many results which obtain stability without the assumption of arbitrarily large models. One is \cite[1.1]{bls}, where the authors prove stability in $\aleph_0$ from categoricity in $\aleph_1$ and existence of a model of cardinality $(2^{\aleph_0})^+$ for atomic classes. Comparing our result to \cite{bls}, note that our result does not need $\lambda$ to be $\aleph_0$ and we work in the more general context of abstract elementary classes. We do not assume the existence of a model in $(2^{\aleph_0})^+$, but we have extra assumptions on the number of types and more importantly we have a cardinal arithmetic assumption beyond ZFC. 
 
 The proof of Theorem \ref{main-2} is done in two steps. First we use \cite[ VI.2.11]{shelahaecbook} to prove that the abstract elementary class is almost stable in $\lambda$ (see Theorem \ref{almost-stability}). Then, we show that there is a minimal type and hence the AEC is stable in $\lambda$ by \cite[VI.5.3.(1)]{shelahaecbook} (see Fact \ref{5.3} of this paper).
 

Combining Theorem \ref{++} and Theorem \ref{main-2}, one obtains the result mentioned in the abstract.

\begin{thm3} Suppose $\lambda^+ < 2^{\lambda} < 2^{\lambda^+}$. Let $\K$ be an AEC and let $\lambda \ge \LS(\K)$.
 Assume $\K$ is categorical in $\lambda$ and $\lambda^+$, $\K$ is almost stable for non-$\lambda^+$-minimal types in $\lambda$ and splitting is continuous in $\lambda$. The following are equivalent.
 
 \begin{enumerate}
 \item  $\K$ has a model in $\lambda^{++}$.
 \item $\K$ is stable in $\lambda$.
 \end{enumerate}
 \end{thm3}

 \emph{Shelah's categoricity conjecture}, was posed by  Shelah in \cite{sh87a}, \cite[6.13.(3)]{sh704}. An updated account of the state of the conjecture can be consulted in the introductions of \cite{vasey18} and \cite{mazcat}. In this paper, we obtain the following categoricity transfer result. 

\begin{thm4}
Let $\K$ be an AEC with weak amalgamation and let $\lambda \ge \LS(\K)$ be such that $\K$ is $\lambda$-tame. Assume  $\K$ has amalgamation in $\lambda$, $\K$ is stable in $\lambda$, and  splitting is continuous in $\lambda$. If $\K$ is categorical in $\lambda$ and $\lambda^+$, then $\K$ is categorical in all $\mu \ge \lambda$.
\end{thm4}

Theorem \ref{trans} can be seen as a generalization of the classical result of Grossberg and VanDieren \cite[4.3]{tamenesstwo}. We replace the assumption that the AEC has arbitrarily large models and amalgamation by the weaker assumption of weak amalgamation (see Definition \ref{w-ap}). The rest of our other assumptions,  stability in $\lambda$, amalgamation in $\lambda$, and continuity of splitting in $\lambda$, are easily derivable in the Grossberg-VanDieren context.  In \cite{sebastien-successive-categ}, it was already observed that the Grossberg-VanDieren result carries through if only weak amalgamation is assumed, but here we do not even assume arbitrarily large models. 



 Theorem \ref{trans} reduces to showing arbitrarily large models and amalgamation as then one can apply Grossberg's and VanDieren's results \cite[5.2, 6.3]{tamenessthree}. We show arbitrarily large models and amalgamation by  first building a w-good $\lambda^+$-frame with density  and then extending it to a w-good $[\lambda^+,\infty)$-frame with density (see Lemma \ref{m-lemma}). W-good frames were introduced in \cite{m1} as weakening of Shelah's key notion of a good frame  \cite[\S II.2, p. 259-263]{shelahaecbook}.

The paper is organized as follows. Section 2 presents necessary background. Section 3 presents a partial answer to Grossberg's question on existence assuming stability in $\lambda$ and continuity of splitting in $\lambda$. Moreover, Section 3 presents the aforementioned categoricity transfer result (Theorem \ref{trans}). Section 4 presents the argument that stability in $\lambda$ follows from the existence of a model of cardinality $\lambda^{++}$ under mild cardinal arithmetic assumptions, categoricity in two succesive cardinals and almost stability for $\lambda^+$-minimal types. It is worth emphasising that Section 3 and 4 are basically independent of each other. The reader only interested in  the results of Section 4 can go directly to Section 4.

This paper was partially written while the third author was working on a Ph.D. thesis under the direction of Rami Grossberg at Carnegie Mellon University, and the third author would like to thank Professor Grossberg for his guidance and assistance in his research in general and in this work
specifically. A version of this paper is Chapter 4 of Yang's PhD thesis \cite{wyt}. We would  like to thank Jeremy Beard for discussions that helped improve the paper. We would like to thank an anonymous referee for comments that help improve the presentation of the paper.

\section{Prelimianries}

\subsection{Basic notions}
We assume that the reader is already familiar with the basic definitions of abstract elementary classes, but we quickly review the basic notions used in this paper.  These are further studied in \cite[\S 4 - 8]{baldwinbook09} and  \cite[\S 2, \S 4.4]{ramibook}. 

An abstract elementary class is a pair $\K=(K, \lea)$ where $K$ is a class of structures in a fixed language and $\lea$ is a partial order on $K$ extending the substructure relation such that $\K$ is closed under isomorphisms and satisfies the  coherence property, the L\"{o}wenheim-Skolem-Tarski axiom and the Tarski-Vaught chain axioms. The L\"{o}wenheim-Skolem-Tarski axiom states that the class satisfies an instance of the Downward L\"{o}wenheim-Skolem theorem and the Tarski-Vaught chain axioms state  that the
class is closed under directed colimits. The reader can consult the definition in \cite[4.1]{baldwinbook09}. We denote the L\"owenheim-Skolem-Tarski number of $\K$ by $LS(\K)$.  

For a cardinal $\mu$, let $\K_\mu$ denote the set of structures in $\K$ with cardinality $\mu$. For a structure $M\in \K$ denote its universe by $|M|$ and its cardinality $\|M\|$. For an AEC $\K$ and  $\lambda \geq \LS(\K)$, we denote by $\Ii(\K, \lambda)$ the number of models in $\K_\lambda$ up to isomorphism. If  $\Ii(\K, \lambda)=1$, we say that $\K$ is categorical in $\lambda$. 

For an AEC $\K$, $\K$ has the amalgamation if for every $M_0\lea M_l$ for $\ell=1,2$, there is $N\in \K$ and $\K$-embeddings $f_\ell:M_\ell\to N$ for $\ell=1,2$ such that $f_1 \restriction {M_0}=f_2 \restriction {M_0}$. We say that $\K$ has amalgamation in $\lambda$, or that $\K_\lambda$ has amalgamation when the condition holds only for models of size $\lambda$.

\begin{hypothesis}
We fix an abstract elementary class $(\K,\lea)$ throughout the paper and $\lambda\geq LS(\K)$ an infinite cardinal. .
\end{hypothesis}

\begin{fact}[{\cite[3.5]{Sh:88}, \cite[4.3]{grossberg2002}}]\label{few_models->ap}
 Suppose $2^\lambda<2^{\lambda^+}$. If $\Ii(\K, \lambda)=1\leq \Ii(\K, \lambda^+)<2^{\lambda^+}$, then $\K$ has amalgamation in $\lambda$.
\end{fact}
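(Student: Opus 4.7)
The plan is to establish the contrapositive: assuming amalgamation fails in $\K_\lambda$, I would construct $2^{\lambda^+}$ pairwise non-isomorphic models in $\K_{\lambda^+}$, contradicting $\Ii(\K,\lambda^+) < 2^{\lambda^+}$. The hypothesis $\Ii(\K,\lambda)=1$ is used to ensure that a single ``bad configuration'' can be reproduced over every $\lambda$-sized node, while $2^\lambda < 2^{\lambda^+}$ is brought in only during the final counting step.

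First, extract a bad triple: since amalgamation fails in $\K_\lambda$, fix $M_0 \lea M_1, M_2$ in $\K_\lambda$ that admit no common amalgam over $M_0$. By categoricity in $\lambda$, the same bad triple can be realized, up to isomorphism, over any $\K_\lambda$-model we encounter. Then build a tree $\langle M_\eta : \eta \in \fct{<\lambda^+}{2}\rangle$ in $\K_\lambda$, $\lea$-increasing along branches and continuous at limits, such that at every splitting node $\eta$ the two children $M_{\eta \concat \langle 0\rangle}, M_{\eta \concat \langle 1\rangle}$ together with $M_\eta$ are isomorphic to $(M_1, M_2)$ over $M_0$. For each $\eta \in \fct{\lambda^+}{2}$ set $M_\eta = \bigcup_{\alpha < \lambda^+} M_{\eta \rest \alpha}$, producing a family of $2^{\lambda^+}$ models in $\K_{\lambda^+}$.

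The main obstacle is the non-structure counting step: showing that for each fixed $\eta$, the set $\{\nu \in \fct{\lambda^+}{2} : M_\nu \cong M_\eta\}$ has cardinality at most $2^\lambda$. The intuitive obstruction is clear: an isomorphism $M_\eta \cong M_\nu$ with $\eta \neq \nu$, applied at the first level $\alpha$ where the branches disagree, would produce a common amalgam of $M_{\eta \rest (\alpha+1)}$ and $M_{\nu \rest (\alpha+1)}$ over $M_{\eta \rest \alpha}$, contradicting the choice of bad triple. Turning this into a sharp $2^\lambda$ bound on each isomorphism class requires an invariant-coding argument in the style of Shelah's weak-diamond / non-structure arguments (the content of \cite{Sh:88}, with a more transparent presentation in \cite{grossberg2002}), so that every potential isomorphism is determined by a $\lambda$-sized parameter. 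Once this bound is in hand, the counting is immediate: partitioning $2^{\lambda^+}$ branches into classes of size at most $2^\lambda$ leaves $2^{\lambda^+}$ classes, using $2^\lambda < 2^{\lambda^+}$, contradicting $\Ii(\K,\lambda^+) < 2^{\lambda^+}$.
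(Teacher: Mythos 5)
The paper does not prove this fact; it is quoted from \cite[3.5]{Sh:88} with a cleaner presentation in \cite[4.3]{grossberg2002}, so there is no in-paper argument to compare against. Your overall framework --- use the failure of amalgamation to build a binary tree $\langle M_\eta : \eta \in \fct{<\lambda^+}{2}\rangle$ of $\lambda$-sized models (reproducing the bad triple over each node via $\Ii(\K,\lambda)=1$, and using that unions of $<\lambda^+$-chains stay in $\K_\lambda$ at limits), and then count branches --- is exactly the non-structure strategy of those references, so you have identified the right route.

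The gap is in the counting step, and it is a genuine one. You propose to show that each isomorphism class $\{\nu : M_\nu \cong M_\eta\}$ has size at most $2^\lambda$ and then divide. That bound is not what the cited arguments prove, and I see no way to establish it directly. Your heuristic --- ``apply the isomorphism at the first level $\alpha$ where $\eta,\nu$ disagree to amalgamate $M_{\eta\rest(\alpha+1)}$ and $M_{\nu\rest(\alpha+1)}$ over $M_{\eta\rest\alpha}$'' --- fails as stated because an arbitrary isomorphism $h : M_\eta \to M_\nu$ need not respect the filtrations at level $\alpha$: there is no reason $h$ should carry $M_{\eta\rest\alpha}$ into $M_{\nu\rest\alpha}$, so you do not get an amalgam over $M_{\eta\rest\alpha}$. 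What the weak diamond $\Phi^2_{\lambda^+}$ (equivalent to $2^\lambda < 2^{\lambda^+}$, via Fact \ref{Ulam}) actually buys you is a stationary set of levels at which a \emph{given} isomorphism \emph{does} cohere with the filtration, after arranging $|M_\eta| = \ell(\eta)$ so that the club $\{\delta : h\rest\delta : \delta \to \delta\}$ makes sense. Moreover, to get $2^{\lambda^+}$ pairwise non-isomorphic models (not merely $>2^\lambda$), one invokes the Devlin--Shelah refinement giving $\lambda^+$ pairwise disjoint stationary sets $S_\gamma$ on each of which $\Phi^2_{\lambda^+}(S_\gamma)$ holds, defines branches $\eta_X$ for $X \subseteq \lambda^+$ by patching the guesses $g_\gamma$ for $\gamma \in X$, and shows directly that $M_{\eta_X} \not\cong M_{\eta_Y}$ for $X \neq Y$. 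This is a direct diagonalization, not a bound on class sizes; the proof of Theorem \ref{almost-stability} in Section 4 of this very paper is a worked example of that machinery and would be the model to follow.
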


For an AEC $\K$, $\K$ has no maximal models if every model has a proper extension and has joint embedding if any two models can be $\K$-embedded into a third model. 

\subsection{Galois types and related properties}
We also assume that the reader is familiar with Galois types, which are sometimes called orbital types in the literature. For $M<_\K N\in \K$ and $a\in |N|$, the Galois type of $a$ over $M$ is denoted by $\type(a/M,N)$. Let $\gS(M)$ denote the set of all Galois types over $M$. When $a\notin |M|$, we say that $\type(a/M,N)$ is non-algebraic and write $\type(a/M,N)\in \gS^{na}(M)$.

\begin{mydef}\
\begin{enumerate}
    \item (\cite{JaSh875})
    $\K$ is \emph{almost stable in $\lambda$} if $|\gS(M)| \le \lambda^+$ for all $M \in \K_\lambda$.
    \item $\K$ is \emph{stable in $\lambda$} if $|\gS(M)| \le \lambda$ for all $M \in \K_\lambda$.
\end{enumerate}
\end{mydef}

\begin{mydef}
$\gS_*$ is  \emph{$\leq_{\K_\lambda}$-type-kind} when:
\begin{enumerate}
    \item $\gS_*$ is a function with domain $\K_\lambda$.
    \item $\gS_*(M)\subseteq \gS^{na}(M)$ for every $M\in \K_\lambda$.
    \item $\gS_*(M)$ commutes with isomorphisms for every $M\in \K_\lambda$.
\end{enumerate}
\end{mydef}
\begin{mydef}[{\cite[VI.1.12(2)]{shelahaecbook}}] 
$\gS_1$ is \emph{hereditarily} when: for $M\lea N \in \K_\lambda$ and $p\in \gS^{na}(N)$ we have that if $p\restriction M\in \gS_1(M)$ then $p\in \gS_1(N)$. 
\end{mydef}

\begin{mydef}[{\cite[VI.2.12]{shelahaecbook}}]
    For $M\in \K_\lambda$ and $p\in \gS^{na}(M)$, we say $p$ is \emph{$\mu$-minimal} if for all $M\lea N\in \K_\lambda$, $|\{q\in \gS^{na}(N): q\restriction M=p\}|\leq \mu$. We denote the set of $\mu$-minimal types by $\gS^{\mu-min}(M)$ and those that are not by $\gS^{\neg \mu-min}(M)$. We say $p$ is minimal when $p$ is $1$-minimal.
\end{mydef}

\begin{remark}
    $\gS^{\mu-min}$ is a $\lea$-type-kind  and hereditary for any cardinal $\mu$.
\end{remark}

\begin{mydef}\
\begin{enumerate}

    \item $\K$ is \emph{$(< \lambda^+, \lambda)$-local} if for every $M \in \K_\lambda$, every $\kappa < \lambda^+$, every increasing continuous chain $\langle M_i : i<\kappa \rangle$ such that  $M=\bigcup_{i<\kappa}M_i$ and every $p,q\in \gS(M)$, if $p\restriction {M_i}=q\restriction {M_i}$ for all $i < \kappa$ then $p=q$.
        \item $\K$ is \emph{$\lambda$-tame} if for every $M \in \K$ and and every $p,q\in \gS(M)$, if $p\neq q$, then there is $N \lea M$ of cardinality  $\lambda$ such that $p\restriction_N \neq q\restriction_N$.
\end{enumerate}

\end{mydef}

 \begin{remark}\label{coh} We will often use that given a coherent sequence of types $\langle p_i\in \gS^{na}(M_i):  i<\delta\rangle$ there is $p\in \gS^{na}(M_\delta)$ such that $p \geq p_i$ for every $i < \delta$ and $\langle p_i\in \gS^{na}(M_i) : i < \delta + 1\rangle$ is coherent. See for example \cite[3.14]{m1}.
 \end{remark}

\begin{mydef}
    Let $\kappa$ be a cardinal and $M\in \K_\lambda$. Let $\gS_*$ be a $\lea$-type kind. Suppose $\Gamma \subseteq \gS^{na}(M)$. We say $\Gamma$ is \emph{$\gS_*$-inevitable} if for all $M\lea N\in\K_\lambda$, if there is $a\in |N|-|M|$ with $\type(a/M,N)\in \gS_*(M)$, then there is $b\in | N | -|M|$ such that $\type(b/M,N)\in \Gamma$.
\end{mydef}

\begin{mydef}
Let $\gS_*$ be  \emph{$\leq_{\K_\lambda}$-type-kind}.  $M$ saturated for $\gS_*$-types in $\lambda^+$ above $\lambda$ if $M \in \K_{\lambda^+}$ and for every  $M_0<_\K M$ with $M_0 \in \K_\lambda$ and $p \in \gS_*(M_0)$, $p$ is realized in $M$. $M$ is saturated  in $\lambda^+$ above $\lambda$ if it is saturated for $\gS^{na}$-types in $\lambda^+$ above $\lambda$.
\end{mydef}

 $M$ is  $\lambda^+$-model-homogeneous above $\lambda$ if for every  $M_0<_\K M$ with $M_0 \in \K_\lambda$  and $M_0 \lea M_1 \in \K_\lambda$, there is $f: M_1 \xrightarrow[M_0]{} M$ a $\K$-embedding, i.e., $f\restriction M_0 = id_{M_0}$. For $\lambda > \LS(K)$,  $M$ is  saturated  in $\lambda^+$ above $\lambda$ if and only if $M$ is $\lambda^+$-model-homogeneous above $\lambda$. See for example \cite[\S II.1.4]{shelahaecbook}.

We finish by introducing universal models.

\begin{mydef}
$M$ is \emph{universal over} $N$ if and only if $\|M\| = \|N\| = \mu$, $N \leq_\K M$
 and for any $N^* \in \K_{\mu}$ such that
$N \leq_\K N^*$, there is $f: N^* \xrightarrow[N]{} M$ a $\K$-embedding
\end{mydef}

We will often use in Section 3 that if $\K$ is an AEC  with amalgamation, no maximal models and joint embedding and $\K$ is stable in $\lambda$, then for any $M \in \K_\lambda$, there is $N$ universal over $M$.   See \cite[\S II]{shelahaecbook}, \cite[2.9]{tamenessone}.

\subsection{Good frames}  Good frames were introduced by Shelah in \cite[\S II.2, p. 259-263]{shelahaecbook}. We will work with  w-good frames in this paper. This a a weakening of the notion of a good frame which was introduced in \cite{m1}. W-good frames are only used in Section 3 of the paper.

\begin{mydef}\label{preframe} 
Let $\lambda<\mu$, where $\lambda$ is a cardinal, and $\mu$ is a cardinal or $\infty$. A \emph{w-good $[\lambda,\mu)$-frame} is a triple $\mathfrak s=(\K,\nf{},\gS^{bs})$ such that:
\begin{enumerate}
    \item $\K$ is an AEC with $\lambda\geq LS(\K)$ and $\K_\lambda\neq \emptyset$.
    \item $\K_{[\lambda,\mu)}$ has amalgamation, the joint embedding property and no maximal models.
    \item $\gS^{bs}\subseteq \bigcup_{M\in \K_{[\lambda,\mu)}}\gS^{na}(M)$. Let $\gS^{bs}(M):=\gS(M)\cap \gS^{bs}$. Types in this family are called \emph{basic types}.
    \item $\nf$ is a relation on quadruples $(M_0,M_1,a,N)$, where $M_0\lea M_1\lea N$, $a\in |N|$ and $M_0,M_1,N\in \K_{[\lambda,\mu)}$. We write $\indep{M_0}{a}{M_1}{N}$, or we say $\type(a/M_1,N)$ does not fork over $M_0$ when the relation $\nf$ holds for $(M_0,M_1,a,N)$.
    \item (Invariance) If $f:N\cong N'$ and $\indep{M_0}{a}{M_1}{N}$, then $\indep{f[M_0]}{f(a)}{f[M_1]}{N'}$. If $\type(a/M_1,N)\in \gS^{bs}(M_1)$, then $\type(f(a)/f[M_1],N')\in \gS^{bs}(f[M_1])$.
    \item (Monotonicity) If $\indep{M_0}{a}{M_1}{N}$ and $M_0\lea M_0'\leq_K M_1'\lea M_1\lea N'\lea N\lea N''$ with $N''\in \K_{[\lambda,\mu)}$ and $a\in |N'|$, then $\indep{M_0'}{a}{M_1'}{N'}$ and $\indep{M_0'}{a}{M_1'}{N''}$.
    \item (Non-forking Types are Basic) If $\indep{M}{a}{M}{N}$ then $\type(a/M,N)\in \gS^{bs}(M).$
    \item (Weak Density) For all $M<_\K N\in \K_\lambda$, there is $a\in |N|-|M|$ and $M'\lea N'\in \K_{[\lambda,\mu)}$ such that $(a,M,N)\leq (a,M',N')$ and $\type(a/M',N')\in \gS^{bs}(M')$.
    \item (Existence of Non-Forking Extension) If $p\in \gS^{bs}(M)$ and $M\leq_K N$, then there is $q\in \gS^{bs}(N)$ extending $p$ which does not fork over $M$.
    \item (Uniqueness) If $M\lea N$ both in $\K_{[\lambda,\mu)}$, $p,q\in \gS^{bs}(N)$ both do not fork over $M$, and $p\restriction M=q\restriction M$, then $p=q$.
    \item (Continuity) If $\delta<\mu$ a limit ordinal, $\langle M_i\mid i\leq \delta\rangle$ increasing and continuous, $\langle p_i\in \gS^{bs}(M_i)\mid i<\delta \rangle$, and $i<j<\delta$ implies $p_j\restriction M_i=p_i$, and $p_\delta\in \gS(M_\delta)$ is an upper bound for $\langle p_i\mid i<\delta\rangle$, then $p\in \gS^{bs}(M_\delta)$. Moreover, if each $p_i$ does not fork over $M_0$ then neither does $p_\delta$.
\end{enumerate}
\end{mydef}

We say that a  w-good $[\lambda,\mu)$-frame has \emph{density} if for every $M <_\K N$  both in $\K_{[\lambda, \mu)}$, there is an $a \in |N|$ such that $\gtp(a/M , N) \in \gS^{bs}(M)$.
 \begin{remark}
     Note that density and inevitability, both introduced in \cite[\S III, \S VI]{shelahaecbook}, are the same notion when $\gS_*=\gS^{bs}$. The former is usually used in the context of frames.
 \end{remark}

\subsection{Splitting} We introduce the basic properties of splitting we will use in this paper. Recall that splitting for AECs was introduced in \cite[Definition 3.2]{sh394}. Splitting is only used in Section 3 of the paper.

\begin{mydef} Let $M \in \K_\lambda$, $M \lea N$ and  $p \in \gS(N)$ . $p$ ($\lambda$-)splits over $M$ if there are $N_1, N_2 \in \K_\lambda$ and $h: N_1 \cong_ M N_2$ such that $M \lea N_1, N_2 \lea N$ and $h(p\rest N_1) \neq p\rest N_2$. 
\end{mydef} 

We will use the following properties of non-splitting often in this section.
\begin{fact}
Assume  $\K$ has amalgamation in $\lambda$ and no maximal model in $\lambda$. 
\begin{enumerate} 
\item (\cite[3.3]{sebastien-s-first-good-frame-construction-paper}) Monotonicity: If $M_0 \lea M_1 \lea M_2 \lea M_3$, $p \in \gS(M_3)$ does not split over $M_0$ and $M_0, M_1, M_2 \in \K_\lambda$, then $p\rest M_2$ does not split over $M_1$. 
\item Let $M_0 \lea M_1 \lea M_2$ all in $\K_\lambda$ and $M_1$ is universal over $M_0$.
\begin{itemize}
\item  (\cite[I.4.10]{van06})  Weak extension: If $p \in \gS^{na}(M_1)$ does not split over $M_0$, then there is $q \in \gS^{na}(M_2)$ such that $q$ extends $p$ and $q$ does not split over $M_0$. 
\item (\cite[I.4.12]{van06}) Weak uniqueness: If  $p, q \in \gS(M_2)$,  $p\rest M_1 = q\rest M_1$, and  $p, q$ do not split over $M_0$, then $p =q$.
\end{itemize}

\item (\cite[3.7]{sebastien-s-first-good-frame-construction-paper}) Weak transitivity: If $M_0 \lea M_1 \lea M_1' \lea M_2$ all in $\K_\lambda$,  $M_1'$ universal over $M_1$ and $p \in \gS^{na}(M_2)$ such that $p$ does not split over $M_1$ and $p\rest M_1'$ does not split over $M_0$, then $p$ does not split over $M_0$. 
\end{enumerate}
\end{fact}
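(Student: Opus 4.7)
The plan is to derive each item of the Fact directly from the definition of $\lambda$-splitting, using amalgamation in $\lambda$ and the universality hypotheses available in items (2) and (3). The combinatorial move common to every step is to transport any candidate witness to splitting through an isomorphism produced by universality, and then rule it out using a non-splitting hypothesis applied over the appropriate base.

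For monotonicity (1) I would argue by contrapositive. If $p\rest M_2$ splits over $M_1$ witnessed by $(N_1,N_2,h)$ with $h:N_1\cong_{M_1}N_2$ and $M_1\lea N_i\lea M_2$, then since $M_0\lea M_1$ the isomorphism $h$ also fixes $M_0$ and $M_0\lea N_i\lea M_2\lea M_3$, so the same triple witnesses that $p$ splits over $M_0$, contradicting the hypothesis. For weak extension and weak uniqueness in item (2), the key tool is that universality of $M_1$ over $M_0$ produces a $\K$-embedding $f:M_2\to M_1$ fixing $M_0$, with $f[M_2]\lea M_1$. For extension, I would pull back $p\rest f[M_2]$ via $f$ (using invariance) to obtain a candidate $q\in\gS(M_2)$; non-splitting of $p$ over $M_0$ applied to the isomorphism $f\rest M_1:M_1\to f[M_1]$ (both submodels of $M_1$ containing $M_0$) gives $f(p)=p\rest f[M_1]$, which forces $q\rest M_1=p$. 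Non-splitting of $q$ over $M_0$ then follows by transporting any hypothetical splitting witness for $q$ into $M_1$ via $f$ and contradicting non-splitting of $p$. For uniqueness, applying non-splitting of $p$ and of $q$ (each over $M_0$) to the isomorphism $f:M_2\to f[M_2]$ yields $f(p)=p\rest f[M_2]$ and $f(q)=q\rest f[M_2]$; since $f[M_2]\lea M_1$, the equality $p\rest M_1=q\rest M_1$ restricts to $p\rest f[M_2]=q\rest f[M_2]$, whence $f(p)=f(q)$ and $p=q$ by invariance.

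For weak transitivity (3) I would suppose toward a contradiction that $p$ splits over $M_0$ witnessed by $N_1,N_2\lea M_2$ and $h:N_1\cong_{M_0}N_2$. Using amalgamation in $\lambda$, build $N_i^\ast\in\K_\lambda$ with $N_i,M_1\lea N_i^\ast\lea M_2$, and by universality of $M_1'$ over $M_1$ produce $\K$-embeddings $g_i:N_i^\ast\to M_1'$ fixing $M_1$. Non-splitting of $p$ over $M_1$ applied to $g_i$ (which fix $M_1$) yields $g_i(p\rest N_i^\ast)=p\rest g_i[N_i^\ast]$, and in particular $g_i(p\rest N_i)=(p\rest M_1')\rest g_i[N_i]$. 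The composed isomorphism $g_2\circ h\circ g_1^{-1}:g_1[N_1]\to g_2[N_2]$ fixes $M_0$, so non-splitting of $p\rest M_1'$ over $M_0$ applied to it gives $(g_2\circ h\circ g_1^{-1})((p\rest M_1')\rest g_1[N_1])=(p\rest M_1')\rest g_2[N_2]$. Chaining these equalities forces $h(p\rest N_1)=p\rest N_2$, contradicting the choice of witness. The main obstacle is exactly this last step: the splitting witness $h$ fixes only $M_0$, not $M_1$, so one cannot directly invoke non-splitting of $p$ over $M_1$; universality of $M_1'$ over $M_1$ is precisely what lets us translate the picture inside $M_1'$, where the two non-splitting assumptions can finally be chained.
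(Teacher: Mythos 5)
Your proof is correct and uses the standard arguments; the paper itself gives no proof of this Fact but only cites \cite{van06} and \cite{sebastien-s-first-good-frame-construction-paper}, which argue along the same lines as you do. The contrapositive for monotonicity, the pullback-through-universality computations for weak extension and weak uniqueness, and the transport-inside-$M_1'$ argument for weak transitivity are all the expected moves, and I have checked that the type-chasing (in particular $q\rest M_1 = p$ in weak extension and $g_2(h(p\rest N_1))=g_2(p\rest N_2)$ in weak transitivity) works out.

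One small slip in part (3): to obtain $N_i^\ast\in\K_\lambda$ with $N_i, M_1\lea N_i^\ast\lea M_2$ you should not invoke amalgamation, since amalgamation only produces an amalgam abstractly, with no reason for it to sit inside $M_2$ as a $\lea$-submodel. The right tool is the L\"owenheim--Skolem--Tarski axiom applied to $|N_i|\cup |M_1|$ inside $M_2$: since $\|N_i\|=\|M_1\|=\lambda\ge\LS(\K)$, this yields $N_i^\ast\in\K_\lambda$ with $N_i, M_1\lea N_i^\ast\lea M_2$ as required. With that replacement the rest of the argument goes through exactly as written.
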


\begin{fact}[{\cite[3.3]{sh394}, \cite[Theorem 2.2.1]{shvi} , \cite[3.8]{leu2}
}]\label{wlocal} (Weak universal local character)
Assume  $\K$ has amalgamation,  no maximal model and is stable in $\lambda$.  If $ \langle M_i : i \leq \lambda \rangle$  is an increasing continuous chain in $\K_\lambda$  with $M_{i + 1}$  universal over $M_i$ for all $i < \lambda$ and $p \in \gS^{na}(M_\lambda)$, then there is  $i < \lambda$ such that $p \rest M_{i + 1}$ does not split over $M_i$.
\end{fact}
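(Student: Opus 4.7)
The plan is to argue by contradiction: suppose that $p\rest M_{i+1}$ splits over $M_i$ for every $i<\lambda$, and produce more than $\lambda$ distinct Galois types over a single model in $\K_\lambda$, contradicting stability in $\lambda$. This is the classical tree-of-types argument for local character of splitting.

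For each $i<\lambda$, unpack the splitting hypothesis to fix witnesses $N_i^1, N_i^2\in\K_\lambda$ with $M_i\lea N_i^\ell\lea M_{i+1}$ and an isomorphism $h_i:N_i^1\cong_{M_i} N_i^2$ such that $h_i(p\rest N_i^1)\neq p\rest N_i^2$ as types over $N_i^2$. Since $\K$ is stable in $\lambda$, has amalgamation and no maximal models, universal extensions exist within $\K_\lambda$, so we may freely amalgamate while staying in $\K_\lambda$ at every stage of what follows.

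Now construct a binary tree of coherent $\K$-embeddings. By induction on $i\le\lambda$ and on $\eta\in 2^i$, build $\K$-embeddings $f_\eta:M_i\to N$ into a common ambient $N\in\K_\lambda$ (itself built by iterating universal extensions), with $\eta\sqsubset\nu\Rightarrow f_\eta\subseteq f_\nu$. At a successor stage $i\to i+1$, given $f_\eta:M_i\to N$, use the universality of $N$ over $f_\eta(M_i)$ to pick two extensions $f_{\eta\concat 0}, f_{\eta\concat 1}:M_{i+1}\to N$ arranged via $h_i$ so that the pushforwards of $p\rest M_{i+1}$ along them restrict to the two distinct types $p\rest N_i^2$ and $h_i(p\rest N_i^1)$ on the image of $N_i^2$, respectively; this uses $h_i$ to twist the $1$-branch. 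Limit stages are handled by coherent unions (Remark \ref{coh}). Each branch $\eta\in 2^\lambda$ then yields $f_\eta:M_\lambda\to N$ and a Galois type $q_\eta\in\gS(N)$ extending the pushforward of $p$.

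By construction, distinct branches $\eta\neq\nu$ that first diverge at level $i$ produce types $q_\eta$ and $q_\nu$ that disagree already on the image of $N_i^2$ in $N$, so $q_\eta\neq q_\nu$. This yields $2^\lambda>\lambda$ distinct Galois types over $N\in\K_\lambda$, contradicting stability in $\lambda$. The main technical obstacle is that the tree has $2^\lambda$ branches, yet all branches must be realized inside a single ambient model of cardinality $\lambda$; making this work requires careful coordination of the amalgamations at every successor stage together with a sufficiently universal $N$, which is the heart of the standard arguments in \cite{sh394, leu2, shvi}. An alternative, more economical route would be to stop the tree construction after $\mu$-many levels, where $\mu\le\lambda$ is least with $2^\mu>\lambda$, which already suffices to exceed the stability bound.
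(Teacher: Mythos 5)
The paper does not give its own proof of this statement---it is stated as a Fact and attributed to \cite[3.3]{sh394}, \cite[3.8]{leu2}, and \cite[Theorem 2.2.1]{shvi}. Your sketch is the standard tree-of-types argument that underlies all three references, so it does follow the ``paper's'' route, and the overall strategy (derive a contradiction to $\lambda$-stability from everywhere-splitting along a universal chain) is correct. A few points deserve sharpening, though.

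First, the cardinal bookkeeping. You frame the obstacle as ``the tree has $2^\lambda$ branches, yet all branches must be realized inside a single ambient model of cardinality $\lambda$.'' That is not quite where the constraint bites: the branches are never objects inside $N$, they only index the types $q_\eta \in \gS(N)$, and $\gS(N)$ is not bounded a priori. What has to fit inside a single $N \in \K_\lambda$ is the set of \emph{nodes} $f_\eta$, $\eta \in 2^{<\delta}$, where $\delta$ is the height of the truncated tree. If you run the construction to height $\lambda$, you need $2^{<\lambda} \le \lambda$, which can fail. Your ``more economical route'' is therefore not an optional refinement but the actual argument: take $\mu \le \lambda$ least with $2^{\mu} > \lambda$; then $2^{<\mu} \le \lambda$ by minimality, the tree of height $\mu$ has $\le \lambda$ nodes, and one obtains $2^{\mu} > \lambda$ pairwise distinct types over a single $N \in \K_\lambda$.

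Second, the distinctness argument needs a fixed comparison base. For $\eta \ne \nu$ first diverging at level $i$, the types $f_\eta(p\rest M_\mu)$ and $f_\nu(p \rest M_\mu)$ live over $f_\eta[M_\mu]$ and $f_\nu[M_\mu]$, which are in general different submodels of $N$; to compare them you must (a) arrange at the successor step that $f_{\eta\concat 0}$ and $f_{\eta \concat 1}$ have the \emph{same} image $A$ of $N_i^2$ (using $h_i$ to ``twist'' one of them so the two pushforwards restrict to $p\rest N_i^2$ and $h_i(p\rest N_i^1)$ respectively), and (b) pass to arbitrary extensions $\bar q_\eta \in \gS(N)$, which remain distinct because they already disagree on the common $A \subseteq f_\eta[M_\mu] \cap f_\nu[M_\mu]$. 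You gesture at (a) but it should be made explicit, since it is exactly why the splitting witnesses enter the construction. With those two points tightened, the sketch matches the cited proofs.
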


\subsection{The weak diamond}

The following principle known as the weak diamond was introduced by Devlin and Shelah \cite{sh65}. The weak diamond is only used in Section 4 of the paper.

\begin{mydef}

Let $S\subseteq \lambda^+$ be a stationary set.  $\Phi_{\lambda^+}^{k}(S)$ holds if and only if for all $ F: {}^{<\lambda^+}(2^{\lambda})  \rightarrow k$  there exists $ g: \lambda^+ \rightarrow k$ such that for all $ f: \lambda^+ \rightarrow 2^{\lambda}$ the set $\{ \alpha \in S : F(f \restriction {\alpha}) = g(\alpha) \}$ is stationary. When $S=\lambda^+$ we write $\Phi^k_{\lambda^+}$ for $\Phi^k_{\lambda^+}(S)$.
\end{mydef}

The proofs of the following two facts can be consulted in \cite[\S 15]{ramibook}.
\begin{fact}
    Let $S\subseteq \lambda^+$ be a stationary set and $k<\omega$. If $\Phi^k_{\lambda^+}(S)$ holds, then for all $$F:\underbrace{ {}^{<\lambda^+} (2^\lambda)\times \ldots \times {}^{<\lambda^+} (2^\lambda)}_{n\text{ times}}\to k$$ there is $ g: \lambda^+ \rightarrow k$ such that for all $ f_i: \lambda^+ \rightarrow 2^{\lambda}$ for $i<n$ the set $$\{ \alpha \in S : F(f_1 \restriction {\alpha}, \ldots, f_{n-1} \restriction \alpha) = g(\alpha) \}$$ is stationary. 
\end{fact}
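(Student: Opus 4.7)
The plan is to reduce the $n$-variable weak diamond to the already available unary form via a componentwise coding. First I would fix a bijection $\pi \colon (2^\lambda)^n \to 2^\lambda$, which exists since $n$ is finite and $2^\lambda$ is infinite, and for each $\alpha < \lambda^+$ extend it to the componentwise bijection $\pi_\alpha \colon ({}^{\alpha}(2^\lambda))^n \to {}^{\alpha}(2^\lambda)$ defined by $\pi_\alpha(h_1, \ldots, h_n)(\beta) = \pi(h_1(\beta), \ldots, h_n(\beta))$ for $\beta < \alpha$. The crucial property, immediate from the definition, is that $\pi_\alpha$ commutes with restriction to initial segments: $\pi_\alpha(h_1, \ldots, h_n) \restriction \beta = \pi_\beta(h_1 \restriction \beta, \ldots, h_n \restriction \beta)$ whenever $\beta \leq \alpha$.

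Next I would collapse the given $F \colon ({}^{<\lambda^+}(2^\lambda))^n \to k$ to a unary coloring $\tilde{F} \colon {}^{<\lambda^+}(2^\lambda) \to k$ by setting $\tilde{F}(h) = F(\pi_{\dom h}^{-1}(h))$ for every $h$ in the domain. Applying the hypothesis $\Phi^k_{\lambda^+}(S)$ to $\tilde F$ produces some $g \colon \lambda^+ \to k$ such that for every single $f \colon \lambda^+ \to 2^\lambda$ the set $\{\alpha \in S : \tilde{F}(f\restriction \alpha) = g(\alpha)\}$ is stationary. I would show that this same $g$ witnesses the $n$-variable conclusion.

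Given arbitrary $f_1, \ldots, f_n \colon \lambda^+ \to 2^\lambda$, define $f \colon \lambda^+ \to 2^\lambda$ by $f(\beta) = \pi(f_1(\beta), \ldots, f_n(\beta))$. By the restriction property above, $f \restriction \alpha = \pi_\alpha(f_1 \restriction \alpha, \ldots, f_n \restriction \alpha)$ for every $\alpha < \lambda^+$, so $\tilde{F}(f \restriction \alpha) = F(f_1 \restriction \alpha, \ldots, f_n \restriction \alpha)$ by the definition of $\tilde F$. Consequently the stationary set provided by the unary weak diamond applied to $f$ coincides with $\{\alpha \in S : F(f_1 \restriction \alpha, \ldots, f_n \restriction \alpha) = g(\alpha)\}$, which is exactly what is required.

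I do not expect any genuine obstacle here; the entire argument rests on the trivial observation that a pointwise-defined coding commutes with restrictions. The only piece that deserves care is tracking the domain of $h$ when inverting $\pi_{\dom h}$ inside the definition of $\tilde F$, but this is immediate since each $\pi_\alpha$ is a genuine bijection on sequences of length $\alpha$.
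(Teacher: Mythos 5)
Your proof is correct, and the coding-by-a-pointwise-pairing-bijection argument is the standard (indeed essentially the only natural) way to reduce the $n$-ary weak diamond to the unary one. The paper itself does not give a proof of this fact but refers the reader to Grossberg's manuscript \cite[\S 15]{ramibook}, so there is no in-paper argument to compare against; however, the proof found there proceeds by exactly this kind of componentwise coding, so your approach is the expected one. Two minor remarks worth being aware of: the paper's statement has a small indexing slip (the hypothesis declares $F$ to be $n$-ary and quantifies over $f_i$ for $i<n$, while the displayed set uses $f_1,\dots,f_{n-1}$), and your reading with $n$ genuine arguments $f_1,\dots,f_n$ is the intended one; also, your definition of $\tilde F$ only uses the values of $F$ on the ``diagonal'' tuples whose coordinates all have the same domain $\alpha$, which is perfectly adequate since those are the only tuples appearing in the conclusion.
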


\begin{fact}\label{Ulam}
\
\begin{enumerate}
\item $2^{\lambda} < 2^{\lambda^+}$ if and only if $\Phi_{\lambda^+}^{2}(\lambda^+)$ holds.

\item Suppose that $\Phi^2_{\lambda^+}$ holds. Then there are disjoint stationary sets $S_\alpha \subseteq \lambda^+$ for $\alpha<\lambda^+$ such that $\Phi^2_{\lambda^+}(S_\alpha)$ holds for all $\alpha<\lambda^+$.
\end{enumerate}
\end{fact}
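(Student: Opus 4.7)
The plan is to treat parts (1) and (2) separately, in each case following the classical Devlin--Shelah template referenced in \cite[\S 15]{ramibook}.

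For the ``easy'' direction of (1), namely $\Phi^2_{\lambda^+}(\lambda^+) \Rightarrow 2^\lambda < 2^{\lambda^+}$, I would argue by contrapositive. Assume $2^{\lambda^+} \le 2^\lambda$ and fix a surjection $\pi:2^\lambda \twoheadrightarrow {}^{\lambda^+}2$. Define $F:{}^{<\lambda^+}(2^\lambda)\to 2$ by $F(s) = 1 - \pi(s(0))(|s|)$ for $|s|\ge 1$, and $F(\emptyset)=0$. Given any candidate predictor $g:\lambda^+\to 2$, pick $x\in 2^\lambda$ with $\pi(x)=g$ and let $f:\lambda^+\to 2^\lambda$ be any function with $f(0)=x$. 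Then $F(f\rest\alpha) = 1-g(\alpha)\ne g(\alpha)$ for every $\alpha\ge 1$, so the prediction set is empty, contradicting $\Phi^2_{\lambda^+}(\lambda^+)$.

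The reverse direction of (1) is the actual weak diamond theorem and is the main technical obstacle. My plan is to mimic the Devlin--Shelah tree argument: assuming $\Phi^2_{\lambda^+}(\lambda^+)$ fails, one obtains $F$ such that every $g$ admits a function $f_g:\lambda^+\to 2^\lambda$ and a club $C_g\subseteq \lambda^+$ with $F(f_g\rest\alpha)\ne g(\alpha)$ for all $\alpha\in C_g$. One then codes each $g$ by enough initial data of $f_g$ (using a stabilization argument on a closed unbounded set of $\alpha<\lambda^+$ together with $\lambda^{<\lambda^+}$-style bookkeeping to enumerate the possibilities for $f_g\rest\alpha$ at the first ``stable'' stage) in order to produce an injection ${}^{\lambda^+}2 \hookrightarrow 2^\lambda$, contradicting $2^\lambda < 2^{\lambda^+}$.

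For part (2), the plan is to combine (1) with a splitting argument. Using Solovay's theorem, the regular uncountable cardinal $\lambda^+$ admits a partition into $\lambda^+$ pairwise disjoint stationary subsets $\langle S_\alpha: \alpha < \lambda^+\rangle$. One then observes that the tree argument carried out for the hard direction of (1) actually establishes $\Phi^2_{\lambda^+}(S)$ for any stationary $S\subseteq \lambda^+$: the cardinal hypothesis $2^\lambda < 2^{\lambda^+}$ supplied by (1) is symmetric in the choice of $S$, and the forward proof of the weak diamond theorem nowhere uses that the predicting set is all of $\lambda^+$ (only that it is stationary). Applying this localized version to each $S_\alpha$ yields the desired conclusion. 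The principal obstacle throughout, as noted, is the Devlin--Shelah tree counting, and I would cross-check the bookkeeping against \cite[\S 15]{ramibook}.
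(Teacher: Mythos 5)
Your argument for the easy direction of (1) is correct, and your plan for the hard direction of (1) — the Devlin--Shelah counting argument via the contrapositive — is the right road, even though as written it is only a rough sketch of what is genuinely the most delicate part of the proof.

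The serious problem is in your treatment of (2). You assert that the proof of the hard direction of (1) ``nowhere uses that the predicting set is all of $\lambda^+$ (only that it is stationary),'' and conclude that $2^\lambda < 2^{\lambda^+}$ already gives $\Phi^2_{\lambda^+}(S)$ for \emph{every} stationary $S$, so that (2) falls out of Solovay's theorem. This is not right. The contrapositive counting argument for (1) works as follows: from $\neg\Phi^2_{\lambda^+}(\lambda^+)$ you obtain $F$ such that, for every $g$, there are $f_g$ and a club $C_g$ with $g(\alpha) = 1 - F(f_g \restriction \alpha)$ for all $\alpha \in C_g$. This lets you recover $g$ on a club, i.e.\ off a nonstationary set, and it is precisely this near-total reconstruction of $g$ that drives the injection $^{\lambda^+}2 \hookrightarrow 2^\lambda$. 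If instead you start from $\neg\Phi^2_{\lambda^+}(S)$ for a stationary but \emph{costationary} $S$, the same data only recovers $g$ on $S \cap C_g$; the values of $g$ on $\lambda^+ \setminus S$ are completely unconstrained, and that is $2^{\lambda^+}$-many free bits, so the counting collapses. In fact, the weak diamond ideal $\{S : \Phi^2_{\lambda^+}(S) \text{ fails}\}$ is a normal ideal that in general strictly contains the nonstationary ideal, so ``weak diamond holds on every stationary set'' is simply not a theorem of $2^\lambda < 2^{\lambda^+}$. This is exactly why (2) appears as a separate clause: it asserts the nontrivial fact that this ideal is not $\lambda^+$-saturated, not that it coincides with the nonstationary ideal. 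The standard route to (2) is not ``Solovay plus inspect the proof of (1),'' but a direct argument (e.g.\ pushing $\Phi^2_{\lambda^+}$ up to a many-colours version and using that to split $\lambda^+$, or a maximal-antichain argument against the weak diamond ideal), and you would need to actually carry one of these out.
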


\section{Existence and categoricity above $\lambda^{++}$}

We provide a partial answer to Grossberg's question on existence and obtain a categoricity transfer result for AECs where splitting is continuous in $\lambda$.

\subsection{Existence of a model in $\lambda^{++}$} We focus first on Grossberg's question on existence in $\lambda^{++}$.

\begin{mydef}\label{d-con}
Assume $\K$ is stable in $\lambda$. Splitting is \emph{continuous in $\lambda$} if for any limit ordinal $\delta < \lambda^{+}$ and any increasing continuous chain $\langle M_i : i \le \delta\rangle$ with $M_{i + 1}$ universal over $M_i$ for all $i < \delta$, if $p \in \gS^{na}(M_\delta)$ is such that $p \rest M_i$ does not split over $M_0$ for all $i < \delta$, then $p$ does not split over $M_0$.
\end{mydef}

The following result is folklore, but we provide a proof as we could not find a reference. 

\begin{lem}\label{loc->con} Assume  $\K$ has amalgamation in $\lambda$, no maximal model in $\lambda$ and is stable in $\lambda$.
If $\K$ is $(<\lambda^+, \lambda)$-local, then splitting is continuous in $\lambda$.
\end{lem}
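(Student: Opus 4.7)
The plan is a weak extension plus weak uniqueness plus locality argument. Let $\langle M_i : i \le \delta\rangle$ witness the hypothesis: an increasing continuous chain in $\K_\lambda$ with $M_{i+1}$ universal over $M_i$ for all $i < \delta$, $\delta < \lambda^+$ a limit ordinal, and $p \in \gS(M_\delta)$ with $p \rest M_i$ not splitting over $M_0$ for every $i < \delta$. Since $\delta < \lambda^+$, the union $M_\delta$ lies in $\K_\lambda$. I will treat the case $p \in \gS^{na}(M_\delta)$ (the algebraic case is a direct verification, since a realization of $p$ already belongs to some $M_j$ with $j < \delta$); then $p \rest M_i \in \gS^{na}(M_i)$ for every $i \ge 1$.

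Apply weak extension to the configuration $M_0 \lea M_1 \lea M_\delta$. Since $M_1$ is universal over $M_0$ and $p \rest M_1$ is a non-algebraic type not splitting over $M_0$, there is $q \in \gS^{na}(M_\delta)$ extending $p \rest M_1$ and not splitting over $M_0$. For each $j$ with $1 \le j < \delta$, both $q \rest M_j$ and $p \rest M_j$ extend $p \rest M_1$ and avoid splitting over $M_0$ (the former by monotonicity of non-splitting, the latter by hypothesis); weak uniqueness applied to $M_0 \lea M_1 \lea M_j$ then forces $q \rest M_j = p \rest M_j$. Moreover, $q \rest M_0 = p \rest M_0$ since both restrict to $p \rest M_1$.

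Finally, invoke $(<\lambda^+, \lambda)$-locality on $M_\delta$. The chain $\langle M_j : j < \delta\rangle$ is increasing and continuous, has union $M_\delta \in \K_\lambda$, and has length $\delta < \lambda^+$. The types $q, p \in \gS(M_\delta)$ agree on $M_j$ for every $j < \delta$, so locality yields $q = p$. Since $q$ does not split over $M_0$, neither does $p$, as desired. The only slightly delicate point is matching the subchain to the format of the locality hypothesis and ensuring non-algebraicity of the restrictions needed for weak extension; every remaining step is a direct appeal to the splitting calculus recalled in the preliminaries.
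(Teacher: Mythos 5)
Your proof is correct and follows the same route as the paper's: apply weak extension to $p \rest M_1$ to obtain $q$, show $q \rest M_j = p \rest M_j$ for all $j < \delta$ via monotonicity of non-splitting and weak uniqueness, and conclude $q = p$ by $(<\lambda^+,\lambda)$-locality. The only difference is that you flag the case where $p \rest M_1$ could be algebraic (so weak extension would not directly apply); the paper silently assumes non-algebraicity there, so your extra care is a small improvement rather than a deviation.
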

\begin{proof}
Let $\delta < \lambda^{+}$ be a limit ordinal,  $\langle M_i : i \le \delta\rangle$ be an increasing continuous chain with $M_{i + 1}$ universal over $M_i$ for all $i < \delta$ and $p \in \gS^{na}(M_\delta)$ such that $p \rest M_i$ does not split over $M_0$ for all $i < \delta$. 

Applying the weak extension property to $p\rest M_1$, there is $q \in \gS^{na}(M_\delta)$ such that $q$ extends $p\rest M_1$ and $q$ does not split over $M_0$. We show that $p =q$. Since $\K$ is $(<\lambda^+, \lambda)$-local, it is enough to show that $p\rest M_i=q\rest M_i$ for all $i < \delta$.

 Let $i< \delta$. When $i=0$ or $i=1$ the result is clear as  $p\rest M_1 \leq q$. When $i > 1$ the result follows from weak uniqueness and the fact that $q \rest M_i$ does not split over $M_0$ by monotonicity. 
\end{proof}

\begin{remark}\label{exam} Universal classes \cite[3.7]{universal-class-part-i},  Quasiminimal AECs (in the sense of \cite{vaseyq}) \cite[4.18]{vaseyq},   and many natural AECs of modules (see for example \cite[\S 3]{maz2}) are $(<\lambda^+, \lambda)$-local. Hence splitting is continuous in all of these classes by Lemma \ref{loc->con}. 
\end{remark}

We provide another condition which also implies continuity of splitting in $\lambda$.

\begin{lem}\label{good-split}
If $\K$ has a type-full good $\lambda$-frame, then  splitting is continuous in $\lambda$.
\end{lem}
\begin{proof}
Let $\delta < \lambda^{+}$ be a limit ordinal,  $\langle M_i : i \le \delta\rangle$ be an increasing continuous chain in $\K_\lambda$ with $M_{i + 1}$ universal over $M_i$ for all $i < \delta$ and $p \in \gS^{na}(M_\delta)$ such that $p \rest M_i$ does not split over $M_0$ for all $i < \delta$.

Then there is an $i <\delta$ such that $p$ does not fork over $M_i$ (in the sense of the good $\lambda$-frame) by local character of the good $\lambda$-frame. Hence $p$ does not split over $M_i$ by \cite[4.2]{bgkv} (see also \cite[4.16]{leu2}). Then it follows that $p$ does not split over $M_0$ by weak transitivity of splitting as $p$ does not split over $M_i$, $p \rest M_{i+1}$ does not split over $M_0$ and $M_{i+1}$ is universal over $M_i$.\end{proof}

\begin{remark}\label{r-g-s} Continuity of splitting also follows from $\lambda$-superstability in the sense of \cite[4.23]{vaseyt} by a similar argument to that of Lemma \ref{good-split} (see also \cite[3.16]{leu2}).  $\lambda$-superstability can be derived from categoricity, amalgamation, and arbitrarily large models \cite{BGVV} or from stability, categoricity, amalgamation and tameness \cite[12.1]{vaseyt}.
\end{remark}

\begin{thm}\label{++}
Let $\K$ be an AEC and let $\lambda \ge \LS(\K)$. If  $\K$ has amalgamation in $\lambda$, no maximal model in $\lambda$ and is stable in $\lambda$, and  splitting is continuous in $\lambda$, then $\K$ has a model in $\lambda^{++}$.

\end{thm}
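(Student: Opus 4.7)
The plan is to reduce the theorem to showing that $\K$ has no maximal model in $\lambda^+$; granting this reduction, a chain of length $\lambda^{++}$ in $\K_{\lambda^+}$ built by iterating proper extensions (and applying L\"owenheim-Skolem-Tarski at each step to keep sizes at $\lambda^+$) unions to a model in $\K_{\lambda^{++}}$. The substantive work lies in producing at least one non-maximal model in $\K_{\lambda^+}$.

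First, using stability, amalgamation in $\lambda$, and no maximal model in $\lambda$, I would build an increasing continuous chain $\langle M_i : i < \lambda^+\rangle$ in $\K_\lambda$ with $M_{i+1}$ universal over $M_i$ for every $i < \lambda^+$. Setting $M^\ast := \bigcup_{i < \lambda^+} M_i$ gives a model in $\K_{\lambda^+}$, and the goal is to construct a non-algebraic type $p^\ast \in \gS^{na}(M^\ast)$; its realization in some proper extension of $M^\ast$ would witness that $M^\ast$ is not maximal.

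In parallel with the chain, I would construct a coherent sequence $\langle p_i \in \gS^{na}(M_i) : i < \lambda^+\rangle$ with each $p_i$ not splitting over $M_0$. Pick any non-algebraic $p_0 \in \gS^{na}(M_0)$; it does not split over $M_0$ vacuously. At a successor stage $i+1$ with $i \geq 1$, the model $M_i$ is universal over $M_0$ (since $M_1$ already is and $M_1 \lea M_i$), so the weak extension property applied to $M_0 \lea M_i \lea M_{i+1}$ produces $p_{i+1} \in \gS^{na}(M_{i+1})$ extending $p_i$ and not splitting over $M_0$. The initial step from $p_0$ to $p_1$ is handled by the standard existence of a non-splitting extension to a universal extension available in stable AECs with amalgamation in $\lambda$. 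Coherence is maintained throughout via Remark \ref{coh}. At a limit stage $\delta < \lambda^+$, Remark \ref{coh} yields a coherent $p_\delta \in \gS^{na}(M_\delta)$ above every $p_i$ with $i < \delta$, and the hypothesis of continuity of splitting in $\lambda$ then guarantees that $p_\delta$ itself does not split over $M_0$. One final invocation of Remark \ref{coh} at $\delta = \lambda^+$ produces $p^\ast \in \gS^{na}(M^\ast)$.

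The main obstacle is the limit-stage argument in the construction of the $p_i$: without the continuity of splitting hypothesis there is no way to guarantee that the non-splitting of each $p_i$ over $M_0$ transfers to the coherent union $p_\delta$, and the construction would fail at the very first limit. The successor steps and the base case are routine consequences of the standard non-splitting calculus in stable AECs with amalgamation in $\lambda$, and the passage from ``$M^\ast$ has a proper extension'' to ``$\K$ has a model in $\K_{\lambda^{++}}$'' is a routine chain argument.
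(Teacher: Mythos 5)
Your approach at limits and successors (building a coherent sequence of non-splitting types, invoking continuity of splitting at limits, weak extension at successors) is exactly the machinery the paper uses, and you have correctly identified continuity of splitting as the point that makes the limit stages work. However there are two genuine gaps, one structural and one technical.

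\textbf{The structural gap: one non-maximal model is not enough.} Your first paragraph announces the reduction ``show that $\K$ has no maximal model in $\lambda^+$'' but then declares ``the substantive work lies in producing at least one non-maximal model in $\K_{\lambda^+}$.'' These are not the same, and the weaker statement does not yield a $\lambda^{++}$-chain. You construct a particular $M^\ast = \bigcup_{i<\lambda^+} M_i$ as the union of a universal chain and show it has a proper extension $N^{(1)}$. But nothing tells you that $N^{(1)}$ is itself the union of a universal chain, so the argument does not iterate, and you also cannot rule out the existence of some other maximal model $N$ unrelated to $M^\ast$ (you have no amalgamation in $\lambda^+$ to compare them). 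The paper instead argues by contradiction: \emph{assume} $N$ is a maximal model in $\K_{\lambda^+}$; amalgamation in $\lambda$ together with maximality of $N$ lets you absorb every universal extension in $\K_\lambda$ back into $N$, so the universal chain can be built \emph{inside $N$ with union all of $N$}. The non-algebraic type over its union is then a type over $N$ itself, giving a proper extension of $N$ and the contradiction. Maximality is being used in an essential way to force the chain to exhaust $N$; your construction never engages with a hypothetical maximal model at all.

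\textbf{The base-case gap: you cannot start from an arbitrary $p_0 \in \gS^{na}(M_0)$.} The weak extension property requires a type $p$ over a model $M_1$ that is \emph{universal over} a smaller base $M_0$, with $p$ already not splitting over $M_0$. Your ``$p_0 \in \gS^{na}(M_0)$ does not split over $M_0$ vacuously'' is true but useless: $M_0$ is not universal over itself, so weak extension does not apply, and there is no general principle in this setting that produces a non-algebraic, non-splitting extension of an arbitrary type over its own base to a universal extension (indeed, $p_0$ could be a $\lambda$-algebraic type with no non-algebraic extension at all). The phrase ``standard existence of a non-splitting extension'' is not a theorem here. The correct move, as in the paper, is a two-step setup: first build a universal chain $\langle M_i : i \le \lambda\rangle$ of length $\lambda$, pick any $p \in \gS^{na}(M_\lambda)$, and invoke Fact \ref{wlocal} (weak universal local character) to find $i < \lambda$ with $p \rest M_{i+1}$ not splitting over $M_i$. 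Only then set $N_\ast := M_i$, $N_0 := M_{i+1}$, $p_0 := p \rest M_{i+1}$; now $N_0$ \emph{is} universal over $N_\ast$ and the weak-extension/continuity machinery can run. Without this preparatory use of local character, the construction never leaves the ground.
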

\begin{proof}
We show $\K$ has no maximal models in $\lambda^+$. Assume for the sake of contradiction that there is $N \in \K_{\lambda^+}$ a maximal model. 


 First build a strictly increasing continuous chain $ \langle M_i : i \leq \lambda \rangle$ in $\K_\lambda$ with $M_{i + 1}$  universal over $M_i$ for all $i < \lambda$ and $M_i \lea N$ for every $i < \lambda$. This is possible by stability and amalgamation in $\lambda$ and the maximality of $N$.  Pick  $p \in \gS^{na}(M_\lambda)$. It follows from Fact \ref{wlocal}, that  there exists $i < \lambda$ such that $p \rest M_{i + 1}$ does not split over $M_i$.

Let $\{ n_i : i <\lambda^+ \}$ be an enumeration of $N$ and $N_* = M_i$. We build an increasing continuous chain $ \langle N_i : i < \lambda^+ \rangle$ in $\K_\lambda$ and $ \langle p_i :  i < \lambda^+ \rangle$ a chain of types such that:

\begin{enumerate}
\item  $N_0 = M_{i+1}$ and $p_0 =  p \rest M_{i + 1}$;
\item for every $i <\lambda^+$, $n_i \in N_{i+1}$, $N_i \lea N$ and $N_{i+1}$ is universal over $N_i$;
    \item for every $i < \lambda^+$,  $p_{i} \in \gS^{na}(N_{i})$ does not split over $N_*$;
 \item if $i< j< \lambda^+$, then $p_i\leq p_j$;
\item for every $j < \lambda^+$, $\langle p_i :  i<j\rangle$ is coherent . 
\end{enumerate}

\fbox{Construction} The base step is given by condition (1) and for $i$ limit, the construction can be carried out by coherence of the sequence and the fact that splitting is continuous in $\lambda$ by assumption. So we  do the case when $i=j+1$. Let $L$ be the structure obtained by applying the Löwenheim-Skolem-Tarski axiom to $N_{j} \cup  \{ n_j \}$ in $N$ and $L^* \in \K_\lambda$ a universal model over $L$, $L^*$  exists by stability and amalgamation in $\lambda$.  Using amalgamation in $\lambda$ and the maximality of $N$ there is $f: L^* \xrightarrow[L]{} N$. Let $N_{j+1} = f[L^*]$. As $N_{j}$ is universal over $N_*$, applying the weak extension property to $p_j$ one obtains $p_{j+1} \in \gS^{na}(N_{j+1})$ extending $p_j$ and such that $p_{j+1}$ does not split over $N_*$. It is easy to check that $N_{j+1}$ and $p_{j+1}$ satisfy conditions (2) to (5).

\fbox{Enough} Let  $p^*\in \gS^{na}(\bigcup_{i < \lambda^+} N_i)$ be an upper bound of the coherent sequence $\langle p_i :  i <\lambda^+\rangle$. Since $p^*$ is not algebraic by Remark \ref{coh} and $N = \bigcup_{i < \lambda^+} N_i$ by condition (2) of the construction, it follows that $N$ has a proper extension. This contradicts the assumption that $N$ was a maximal model. \end{proof}

\subsection{Categoricity above $\lambda^{++}$} We obtain a partial solution to the categoricity problem. A key assumption we will use to transfer categoricity that we did not have in the previous section is tameness. 

The following two results are known, but we could not find a reference so we sketch the proofs for the convenience of the reader.  

\begin{fact}\label{model-h}
If $\K$ has amalgamation in $\lambda$, $\K$ is stable in $\lambda$,  and $\K$ is categorical in $\lambda^+$, then the model of cardinality $\lambda^+$ is $\lambda^+$-model-homogeneous above $\lambda$. 
\end{fact}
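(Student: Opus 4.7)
The plan is to construct a saturated model of cardinality $\lambda^+$ and then invoke categoricity in $\lambda^+$ to transfer its saturation to the unique model of $\K_{\lambda^+}$. Since saturation in $\lambda^+$ above $\lambda$ coincides with $\lambda^+$-model-homogeneity above $\lambda$ (as recalled just before the statement, via \cite[\S II.1.4]{shelahaecbook}), this yields the conclusion.

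First I would fix some $N_0 \in \K_\lambda$ (which exists by L\"owenheim-Skolem-Tarski applied to any model in $\K_{\lambda^+}$) and build a $\lea$-increasing continuous chain $\langle N_i : i < \lambda^+ \rangle$ in $\K_\lambda$ in which every $p \in \gS(N_i)$ is realized in $N_{i+1}$. At a successor stage, stability in $\lambda$ gives $|\gS(N_i)| \le \lambda$, so I would enumerate the types as $\langle p_\alpha : \alpha < \lambda \rangle$ and build an auxiliary chain $N_i = L^0 \lea L^1 \lea \cdots$ in $\K_\lambda$ with $L^{\alpha+1}$ realizing $p_\alpha$ over $L^\alpha$; to produce $L^{\alpha+1}$, take any extension of $N_i$ in $\K_\lambda$ realizing $p_\alpha$ (via L\"owenheim-Skolem-Tarski on a witness of $p_\alpha$), amalgamate with $L^\alpha$ over $N_i$ using amalgamation in $\lambda$, and apply L\"owenheim-Skolem-Tarski to the amalgam to stay in $\K_\lambda$. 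Setting $N_{i+1} = \bigcup_{\alpha < \lambda} L^\alpha$ and taking unions at limit stages produces the chain; write $N^* = \bigcup_{i < \lambda^+} N_i \in \K_{\lambda^+}$.

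To verify that $N^*$ is saturated in $\lambda^+$ above $\lambda$, take $M_0 \lea N^*$ with $M_0 \in \K_\lambda$. Regularity of $\lambda^+$ gives some $i < \lambda^+$ with $|M_0| \subseteq |N_i|$, and the coherence axiom then yields $M_0 \lea N_i$. For any $p \in \gS^{na}(M_0)$, amalgamation in $\lambda$ produces a type $\hat p \in \gS(N_i)$ restricting to $p$; by construction $\hat p$ is realized in $N_{i+1} \lea N^*$, and any realization of $\hat p$ also realizes $p$. Categoricity in $\lambda^+$ then gives that the unique model of $\K_{\lambda^+}$ is isomorphic to $N^*$, so it is itself saturated in $\lambda^+$ above $\lambda$ and hence $\lambda^+$-model-homogeneous above $\lambda$.

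The main technical step is the successor stage of the construction: realizing all $\lambda$ types over $N_i$ inside a single model of $\K_\lambda$ requires both stability (to bound $|\gS(N_i)|$) and amalgamation in $\lambda$ (to merge successive realizations without leaving the class at cardinality $\lambda$). A minor wrinkle is that the saturation/model-homogeneity equivalence as stated in the excerpt assumes $\lambda > \LS(\K)$; in the edge case $\lambda = \LS(\K)$ one still recovers the equivalence via a direct back-and-forth on the elements of a would-be extension, at each stage realizing the appropriate one-type in $N^*$ using saturation together with amalgamation in $\lambda$.
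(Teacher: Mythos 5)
Your approach is correct in outline but takes a genuinely different route from the paper, and it has one real gap.

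The paper's proof first reduces, without loss of generality, to the case where $\K$ has joint embedding and no maximal models in $\lambda$ (by restricting to the joint-embeddability class of $\K_\lambda$ that generates the unique model of $\K_{\lambda^+}$). It then builds a strictly increasing continuous chain $\langle M_i : i < \lambda^+\rangle$ with each $M_{i+1}$ \emph{universal} over $M_i$, and shows the union is $\lambda^+$-model-homogeneous above $\lambda$ directly by a cofinality argument, never invoking the saturation/model-homogeneity equivalence. Your proof instead builds each $N_{i+1}$ to merely \emph{realize all types} over $N_i$ (not be universal over it), proves the union is saturated, and then appeals to the equivalence between saturation and model-homogeneity. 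Both routes are standard; the paper's is slightly cleaner because it sidesteps the equivalence, which, as you note, is stated only for $\lambda > \LS(\K)$. Your sketch of a back-and-forth workaround for the edge case $\lambda = \LS(\K)$ is plausible (the saturated-to-homogeneous direction is a back-and-forth over increasing chains of $\K_\lambda$-submodels and only uses amalgamation in $\lambda$), but the paper avoids needing to justify it at all.

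The genuine gap: you never argue that the chain $\langle N_i : i < \lambda^+\rangle$ is strictly increasing, so you cannot conclude $N^* = \bigcup_{i<\lambda^+} N_i \in \K_{\lambda^+}$. If some $N_i$ were $\lea$-maximal, all of $\gS(N_i)$ would be algebraic, the chain would stabilize at $N_i$, and $N^*$ would have cardinality $\lambda$. This is exactly what the paper's WLOG reduction rules out: restricting to the joint-embeddability class of $N_0$ (with $N_0$ a $\K_\lambda$-submodel of the unique $\lambda^+$-model) and arguing that this class has no maximal models in $\lambda$ — roughly, a maximal $N$ in this class would, by repeated amalgamation over an increasing resolution of the $\lambda^+$-model and maximality of $N$, yield a $\K$-embedding of the $\lambda^+$-model into $N$, contradicting $\|N\| = \lambda$. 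Without some such reduction, your construction does not produce a model of cardinality $\lambda^+$. This is a standard and easily supplied step, but it is missing from the write-up as given.
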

\begin{proof}
We can assume without loss of generality that $\K$ has joint embedding and no maximal models in $\lambda$. If not, partition $\K_\lambda$ into equivalence classes given by $M$ is equivalent to  $N$ if they can be $\K$-embedded into a model in $\K_\lambda$, and restrict yourself to the class that generates the model in $\lambda^+$.

Build a strictly increasing continuous chain $ \langle M_i : i < \lambda^+ \rangle$ in $\K_\lambda$ with $M_{i + 1}$  universal over $M_i$ for all $i < \lambda$. This is possible by stability, joint embedding, no maximal,  and amalgamation in $\lambda$. Let $M_{\lambda^+} = \bigcup_{i<\lambda^+} M_i \in \K_{\lambda^+}$. Using a cofinality argument, it is clear that  $M_{\lambda^+}$ is model-homogeneous above $\lambda$. Therefore, the model of cardinality $\lambda^+$ is $\lambda^+$-model-homogeneous above $\lambda$. 
\end{proof}

\begin{fact}\label{w2}
Assume  $\K$ has amalgamation in $\lambda$,  no maximal model in $\lambda$ and is stable in $\lambda$ and $\K$ is categorical in $\lambda^+$. If $N \in \K_{\lambda^+}$ and $p \in \gS(N)$, then there is $M \in \K_\lambda$ such that $M \lea N$ and $p$ does not split over $M$. 
\end{fact}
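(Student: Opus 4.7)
The plan is to run a closing-off argument against the weak universal local character statement of Fact \ref{wlocal}. I may assume $p$ is non-algebraic, since if $p$ is realized by some $a \in |N|$, then any $M \lea N$ in $\K_\lambda$ with $a \in |M|$ trivially works. By Fact \ref{model-h}, the unique model $N \in \K_{\lambda^+}$ is $\lambda^+$-model-homogeneous above $\lambda$, and this allows me to resolve $N = \bigcup_{i < \lambda^+} N_i$ as a strictly increasing continuous chain $\langle N_i : i < \lambda^+ \rangle$ in $\K_\lambda$ with $N_{i+1}$ universal over $N_i$ for every $i < \lambda^+$. Such a chain is built abstractly from stability, amalgamation, and no maximal models in $\lambda$, and its union is isomorphic to $N$ by categoricity in $\lambda^+$.

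Suppose for contradiction that $p$ splits over every $N_i$. For each $i < \lambda^+$ the splitting is witnessed by some $B_1^i, B_2^i \in \K_\lambda$ with $N_i \lea B_1^i, B_2^i \lea N$ together with an isomorphism $h^i : B_1^i \cong_{N_i} B_2^i$ satisfying $h^i(p \rest B_1^i) \neq p \rest B_2^i$. By regularity of $\lambda^+$ (together with coherence), the $\lambda$-sized sets $|B_1^i|, |B_2^i|$ lie inside some $|N_{g(i)}|$ with $g(i) < \lambda^+$, so $B_1^i, B_2^i \lea N_{g(i)}$ and $p \rest N_{g(i)}$ already splits over $N_i$. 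The set $C = \{\alpha < \lambda^+ : \alpha \text{ is a limit and } g[\alpha] \subseteq \alpha\}$ is then a club in $\lambda^+$; choose an increasing continuous sequence $\langle \alpha_i : i \leq \lambda \rangle \subseteq C$ with $\alpha_{i+1} > \alpha_i + 1$ for every $i < \lambda$.

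The chain $\langle N_{\alpha_i} : i \leq \lambda \rangle$ is increasing continuous in $\K_\lambda$, and each $N_{\alpha_{i+1}}$ is universal over $N_{\alpha_i}$ because it contains the universal extension $N_{\alpha_i + 1}$. Applying Fact \ref{wlocal} to this chain and $p \rest N_{\alpha_\lambda}$ yields some $i^* < \lambda$ such that $p \rest N_{\alpha_{i^* + 1}}$ does not split over $N_{\alpha_{i^*}}$. However, $\alpha_{i^* + 1} \in C$ and $\alpha_{i^*} < \alpha_{i^* + 1}$ force $g(\alpha_{i^*}) < \alpha_{i^* + 1}$, so the splitting witnesses for $p$ over $N_{\alpha_{i^*}}$ lie inside $N_{g(\alpha_{i^*})} \lea N_{\alpha_{i^* + 1}}$ and thereby witness splitting of $p \rest N_{\alpha_{i^* + 1}}$ over $N_{\alpha_{i^*}}$ — a contradiction. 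Hence $p$ does not split over some $N_{i^*}$, and $M := N_{i^*}$ is as required. The only mild obstacle is bookkeeping: checking that the subchain extracted from $C$ retains the universality property at successors, which is immediate once one notes that universality is preserved under enlarging the target model.
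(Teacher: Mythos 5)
Your proof is correct and takes essentially the same route as the paper, which cites exactly this closing-off argument (resolve $N$ by a chain with universal successors, collect splitting witnesses, close off on a club, and contradict the weak universal local character of Fact \ref{wlocal}) via the reference to \cite[12.5]{baldwinbook09}. You have simply written out the details that the paper leaves to that reference.
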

\begin{proof}
    It follows from Fact \ref{wlocal} using that $N$ is $\lambda^+$-homogeneous above $\lambda$ by Fact \ref{model-h} and an analogous argument to that of \cite[12.5]{baldwinbook09}.
\end{proof}

The following weakening of amalgamation was isolated in \cite[4.11]{universal-class-part-i} and developed in \cite[\S 4]{universal-class-part-i}. Universal classes and classes with intersections have weak amalgamation.

\begin{defin}\label{w-ap} $\K$ has \emph{weak amalgamation} if whenever $\gtp(a_1/ M, N_1) = \gtp(a_2/M, N_2)$ there are $N_1' \lea N_1$  and $N_2 \lea N_3$  such that $\{a_1\} \cup M \subseteq N_1'$ and $f: N_1' \xrightarrow[M]{} N_3$ is a $\K$-embedding with $f(a_1)=a_2$. 
\end{defin}

\begin{lem}\label{m-lemma}
Let $\K$ be an AEC and let $\lambda \ge \LS(\K)$. Assume  $\K$ has amalgamation in $\lambda$, $\K$ is stable in $\lambda$, and  splitting is continuous in $\lambda$. If $\K$ is categorical in $\lambda^+$, $\K$ is $\lambda$-tame and has weak amalgamation, then $\K_{\geq \lambda} $ has amalgamation  and $\K$ has arbitrarily large models.
\end{lem}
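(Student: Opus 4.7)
The plan is the one sketched in the introduction: build a w-good $\lambda^+$-frame $\mathfrak{s}$ with density and then extend it to a w-good $[\lambda^+,\infty)$-frame with density. Having such an extended frame automatically yields amalgamation and no maximal models in every $\mu \geq \lambda^+$, so combined with the hypothesised amalgamation in $\lambda$ this gives both conclusions of the lemma. As a preparatory step, one restricts to the sub-AEC of models $\K$-embedding into the unique $N^* \in \K_{\lambda^+}$ to obtain joint embedding and no maximal models in $\lambda$ (all other hypotheses transfer), so that Theorem \ref{++} applies to produce a model in $\lambda^{++}$. By Fact \ref{model-h}, $N^*$ is $\lambda^+$-model-homogeneous above $\lambda$.

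For the frame at $\lambda^+$, take the basic types to be all non-algebraic types and declare that $\gtp(a/M,N)$ (with $M \lea N$ in $\K_{\lambda^+}$) does not $\mathfrak{s}$-fork over $M$ iff there is $M_0 \in \K_\lambda$, $M_0 \lea M$, such that $\gtp(a/M',N)$ does not $\lambda$-split over $M_0$ for every $M' \in \K_\lambda$ with $M_0 \lea M' \lea M$. Existence of such a witness $M_0$ for any $p \in \gS^{na}(M)$ is Fact \ref{w2}. Amalgamation in $\K_{\lambda^+}$ follows from the $\lambda^+$-model-homogeneity of $N^*$ together with categoricity, via a standard back-and-forth. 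Uniqueness of non-forking extensions combines weak uniqueness of non-splitting with $\lambda$-tameness; existence of non-forking extensions uses stability and the weak extension property. Continuity at cofinalities below $\lambda^+$ is exactly continuity of splitting; at cofinality $\lambda^+$ one exploits model-homogeneity together with $\lambda$-tameness to reduce to the previous case. Density is immediate since basic equals non-algebraic.

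To pass to a w-good $[\lambda^+,\infty)$-frame with density one lifts non-forking canonically using $\lambda^+$-tameness (inherited from $\lambda$-tameness) and uses weak amalgamation to supply amalgamation in every $\mu \geq \lambda^+$. This extension step is the main obstacle: without full amalgamation in higher cardinalities, one must glue amalgamation diagrams coherently along a transfinite construction in which tameness identifies the types to be amalgamated and weak amalgamation produces the required common extensions, following the techniques developed in \cite{m1} and \cite{sebastien-successive-categ}. Once the extended frame is in hand, $\K$ has amalgamation in every $\mu \geq \lambda^+$ and models of arbitrarily large cardinality, which together with the assumed amalgamation in $\lambda$ completes the proof.
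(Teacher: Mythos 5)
Your high-level plan is the same as the paper's: define a w-good $\lambda^+$-frame via $\lambda$-nonsplitting, verify the axioms using continuity of splitting and tameness, use weak amalgamation (with density and existence) to get amalgamation in $\lambda^+$, and then extend the frame up to $[\lambda^+,\infty)$. However, there is a gap in the central technical step, namely the definition of the frame's non-forking relation.

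You only define when $\gtp(a/M,N)$ ``does not $\mathfrak{s}$-fork over $M$,'' i.e., over the type's own domain; this is the degenerate diagonal case. A frame requires a relation $\dnf$ on quadruples $(M_0, M_1, a, N)$ with $M_0 \lea M_1 \lea N$ all in $\K_{\lambda^+}$, and you never say what it means for $\gtp(a/M_1,N)$ not to fork over a proper $\lea$-submodel $M_0 \in \K_{\lambda^+}$. This is where the real work lies: the paper defines (with $M$ the base, $N$ the domain, $R$ the ambient) that $\gtp(a/N,R)$ does not $\lambda^+$-fork over $M$ iff there is $M' \lea M$ in $\K_\lambda$ such that for every $N' \in \K_\lambda$ with $M' \lea N' \lea N$, there is $M_0' \lea M'$ in $\K_\lambda$ with $M'$ \emph{universal over} $M_0'$ and $\gtp(a/N,R)\rest N'$ not splitting over $M_0'$. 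Notice that the quantifier ranges over $\lambda$-sized submodels of the \emph{domain} $N$ (yours ranges over submodels of $M$, which here is the base), and crucially the witness $M_0'$ is allowed to vary with $N'$ subject to an internal universality constraint. That universality is exactly what makes weak uniqueness and weak extension of non-splitting applicable when proving existence, uniqueness, and transitivity for the frame; without it (or with a fixed $M_0$ as in your statement), those verifications would need a different argument, and it is not clear the axioms go through. Your continuity discussion also inverts the difficulty: the paper handles cofinality $\lambda^+$ via Fact \ref{w2} and transitivity, whereas continuity of splitting is invoked precisely at cofinalities $\leq\lambda$ through a careful chain construction; your sketch mentions tameness for the $\lambda^+$ case, which is not how the paper proceeds. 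Finally, for the lift to $[\lambda^+,\infty)$ the paper appeals to \cite[4.16]{universal-class-part-i} and \cite[3.24]{m1} (and \cite{extendingframes}); your citations point in the right direction but do not identify the actual mechanism.

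In short, the architecture of your proposal matches the paper, but the definition of non-forking is both incomplete (missing the general base case) and missing the internal universality witness that makes the non-splitting toolkit usable, so the frame axioms are not actually verified by what you wrote.
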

\begin{proof}
As before we can assume without loss of generality that $\K$ has joint embedding and no maximal models in $\lambda$. Moreover, we assume that $\K_{<\lambda} = \emptyset$. 

Let  $\s=( \K, \dnf , \gS^{bs})$ be given by:
\begin{itemize}
\item For $M \in \K_{\lambda^+}$, $\gS^{bs}(M)= \gS^{na}(M)$.
\item For $M, N, R \in \K_{\lambda^+}$ we define:
$a \dnf_{M}^{R} N$ if and only if   $M \lea N \lea R$, $a\in |R| - |N|$ and there is $M' \in \K_\lambda$ with $M' \lea M$ such that for every $N' \in \K_{\lambda}$ with $M' \lea N' \lea N$ there is $M_0' \in \K_\lambda$ such that  $M_0' \lea M'$, $M'$ is universal over $M_0'$ and  $\gtp(a/N, R)\rest N'$  does not split over $M_0'$.  We say that $\gtp(a/N, R)$ does not $\lambda^+$-fork over $M$.

\end{itemize}

\underline{Claim}: $\s=( \K, \dnf , \gS^{bs})$ is a w-good $\lambda^+$-frame with density.

  \underline{Proof of Claim}: This frame was first considered in \cite[4.2,3.8]{sebastien-s-first-good-frame-construction-paper} under different assumptions, we show that everything still goes through in our  setting. First observe that all the models in $\lambda^+$ are $\lambda^+$-model-homogeneous  by Fact \ref{model-h} so we can apply the results of \cite[\S 4, 5]{sebastien-s-first-good-frame-construction-paper}. $\s$ is a pre-$\lambda^+$-frame by \cite[4.6]{sebastien-s-first-good-frame-construction-paper}, $\K_{\lambda^+}$ has no maximal models by Theorem \ref{++} and has joint embedding by categoricity in $\lambda^+$, $\s$  has: density by \cite[4.9]{sebastien-s-first-good-frame-construction-paper}, uniqueness by $\lambda$-tameness and \cite[5.3]{sebastien-s-first-good-frame-construction-paper}, and transitivity by \cite[4.10]{sebastien-s-first-good-frame-construction-paper}. We show continuity,  existence of non-forking extensions, and amalgamation in $\lambda^+$ as these are shown in \cite{sebastien-s-first-good-frame-construction-paper} under additional assumptions.

 \begin{itemize}

 \item \underline{Continuity}:  Let $\delta < \lambda^{++}$ be a limit ordinal which we may assume to be a regular cardinal, $\langle M_i : i < \delta \rangle $ be an increasing continuous chain in $ \K_{\lambda^+}$ and $p \in \gS^{na} (M_\delta)$ such that for every $i < \delta$,  $p\rest M_i$ does not $\lambda^+$-fork over $M_0$.    There is $M^* \in \K_\lambda$ such that $M^* \lea M_\delta$ and  $p$ does not split over $M^*$ by Fact \ref{w2}. There are two cases to consider:
 
 \underline{Case 1}: $\delta=\lambda^+$. Then there is $i < \lambda^+$, such that $M^* \lea M_i$. Hence $p$ does not $\lambda^+$-fork over $M_i$ by \cite[4.8]{sebastien-s-first-good-frame-construction-paper}. Then by the assumption that $p\rest M_i$ does not $\lambda^+$-fork over $M_0$ and transitivity of $\s$, we have that $p$ does not $\lambda^+$-fork over $M_0$.

\underline{Case 2}: $\delta \leq \lambda$.  For each $i < \delta$, there is $M_0^i \in \K_\lambda$ such that $M_0^i \lea M_0$ and $p\rest M_i$ does not split over $M_0^i$  by \cite[4.8]{sebastien-s-first-good-frame-construction-paper}. Since $\delta \leq \lambda$, using stability in $\lambda$, monotonicity of splitting and that $M_0$ is $\lambda^+$-model-homogeneous, there are $M_{0,0} \lea  M_{0, 1} \in \K_\lambda$ such that $M_{0,1} \lea M_0$, $M_{0,1}$ is universal over $M_{0,0}$ and for every $i < \delta$, $p\rest M_i$ does not split over $M_{0,0}$. 

Let $N^*, N^{**} \lea M_\delta$ both in $\K_\lambda$ such that $N^{**}$ is universal over $N^*$ and $M_{0,0} \cup M^* \subseteq N^*$. Using stability in $\lambda$, monotonicity of splitting and that  the $M_i$'s are $\lambda^+$-model-homogeneous, one can build  $ \langle N_i : i \leq \delta \rangle$ in $\K_{\lambda}$  increasing continuous such that $N_0 =M_{0,1}$, $N_{i+1}$ is universal over $N_i$, $N_i \lea M_i$, $N^{**} \cap M_{i+1} \subseteq N_{i+1}$ and $p\rest N_i$ does not split over $M_{0,0}$.  Since splitting is continuous in $\lambda$ by assumption, $p\rest N_\delta$ does not split over $M_{0,0}$. 
 
 We show that $M_{0,1} \lea M_0$ witnesses that $p$ does not $\lambda^+$-fork over $M_0$.  Let $N' \in \K_\lambda$ with $M_{0,1} \lea N' \lea M_\delta$ and $M_0' = M_{0,0}$, we show that $p\rest N'$ does not split over $M_{0,0}$. Let $L \in \K_\lambda$ be the structure obtained by applying the Löwenheim-Skolem-Tarski axiom to $N_\delta \cup N^{**} \cup N'$ in $M_\delta$. By monotonicity of splitting $p\rest L$ does not split over $N^*$. Since  $p\rest N_\delta$ does not split over $M_{0,0}$ and  $N_\delta$ is universal over $N^*$ because $N^{**} \lea N_\delta$, then  $p\rest L$ does not split over $M_{0,0}$ by weak transitivity. Therefore $p\rest N'$ does not split over $M_{0,0}$ by monotonicity of splitting.

\item  \underline{Existence of non-forking extension}: Let $M \lea N$ both in $\K_{\lambda^+}$ and $p \in \gS^{na}(M)$. There is $M^* \in \K_\lambda$ such that $M^* \lea M$ and  $p$ does not split over $M^*$ by Fact \ref{w2}. First build an increasing continuous chain $ \langle M_i : i <\lambda^+ \rangle$ in $\K_\lambda$ with   $M_i \lea M$  for all $i < \lambda^+$, $M_{0}$ is universal over $M^*$ and $M =\bigcup_{i < \lambda^+} M_i$ . 

Let $\{ n_i : i <\lambda^+ \}$ be an enumeration of $N$. We build, as in Theorem \ref{++} using that $N$ is $\lambda^+$-model-homogeneous, an increasing continuous chain $ \langle N_i : i < \lambda^+ \rangle$ in $\K_\lambda$ and $ \langle p_i :  i < \lambda^+ \rangle$ a coherent sequence of types such that:

\begin{enumerate}
\item  $N_0 = M_{0}$ and $p_0 =  p \rest M_{0}$;
\item for every $i <\lambda^+$, $n_i \in N_{i+1}$, $N_i \lea N$ and $N_{i+1}$ is universal over $N_i$;
    \item for every $i < \lambda^+$,  $p_{i} \in \gS^{na}(N_{i})$ does not split over $M^*$;
 \item if $i< j< \lambda^+$, then $p_i\leq p_j$;
\item for every $j < \lambda^+$, $\langle p_i :  i<j\rangle$ is coherent . 
\end{enumerate}

 Let  $p_{\lambda^+}\in \gS^{na}(\bigcup_{i < \lambda^+} N_i) =\gS^{na}(N)$ be an upper bound of the coherent sequence $\langle p_i :  i <\lambda^+\rangle$. We show that $p_{\lambda^+} \geq p$ and that $p_{\lambda^+}$ does not $\lambda^+$-fork over $M$.

 We show that for every $i < \lambda^+$, $p_i\rest M_i=p\rest M_i$. This is enough to show that   $p_{\lambda^+} \geq p$ as $\K$ is $\lambda$-tame. Let $i < \lambda^+$. Observe $p_i\rest M_i$ does not split over $M^*$ by condition (3), $p\rest M_i$ does not split over $M^*$ by monotonicity of splitting, $(p_i\rest M_i)\rest M_0 = p_0 = p\rest M_0= (p\rest M_i)\rest M_0 $ by conditions (1), (4) and $M_0$ is universal over $M^*$, then $p_i\rest M_i=p\rest M_i$ by weak uniqueness.

 We show that $M_{0} \lea M$ witnesses that $p$ does not $\lambda^+$-fork over $M$.  Let $N' \in \K_\lambda$ with $M_{0} \lea N' \lea N$ and $M_0' = M^*$. Observe that $p\rest N'$ does not split over $M^*$ by condition (3) and monotonicity of splitting. 
 \item \underline{Amalgamation in $\lambda^+$}: It follows from density and existence of non-forking extension of $\s$ and weak amalgamation by \cite[4.16]{universal-class-part-i}.   $\dagger_{\text{Claim}}$
 \end{itemize}

Since $\K$ has weak amalgamation, is $\lambda$-tame and $\s$ is a w-good $\lambda$-frame with density, one can show that  $\K$ has a $[\lambda^+, \infty)$-w-good frame with density following the arguments of \cite[4.16]{universal-class-part-i} and \cite[3.24]{m1} (see also \cite{extendingframes}). In particular, $\K_{\geq \lambda}$ has  amalgamation and $\K$ has arbitrarily large models. 
\end{proof}

\begin{thm}\label{trans}
Let $\K$ be an AEC with weak amalgamation and let $\lambda \ge \LS(\K)$ be such that $\K$ is $\lambda$-tame. Assume  $\K$ has amalgamation in $\lambda$, $\K$ is stable in $\lambda$, and  splitting is continuous in $\lambda$. If $\K$ is categorical in $\lambda$ and $\lambda^+$, then $\K$ is categorical in all $\mu \ge \lambda$.
\end{thm}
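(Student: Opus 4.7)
The plan is to reduce Theorem \ref{trans} directly to Lemma \ref{m-lemma} combined with the Grossberg--VanDieren upward categoricity transfer. The conceptual heart of the argument has already been packaged into Lemma \ref{m-lemma}, so what remains is essentially a hypothesis check and a single citation.

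First, I would verify that every hypothesis of Lemma \ref{m-lemma} is present in the statement of Theorem \ref{trans}. The lemma asks for amalgamation in $\lambda$, stability in $\lambda$, continuity of splitting in $\lambda$, categoricity in $\lambda^+$, $\lambda$-tameness, and weak amalgamation; all of these are assumed in Theorem \ref{trans}. Invoking Lemma \ref{m-lemma} then yields that $\K_{\geq \lambda}$ has amalgamation and that $\K$ has arbitrarily large models.

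Second, with amalgamation in $\K_{\geq \lambda}$, arbitrarily large models, $\lambda$-tameness, and categoricity in the successor cardinal $\lambda^+$ all in hand, the hypotheses of Grossberg and VanDieren's upward categoricity transfer theorem \cite[5.2, 6.3]{tamenessthree} are satisfied. Applying that result gives categoricity of $\K$ in every $\mu > \lambda$. Combined with the assumed categoricity in $\lambda$ itself, this establishes categoricity in every $\mu \geq \lambda$, as required.

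The main obstacle has already been confronted inside Lemma \ref{m-lemma}: one must build a w-good $\lambda^+$-frame with density out of the local data (stability, continuity of splitting, categoricity in $\lambda^+$, weak amalgamation, and $\lambda$-tameness), verify its frame axioms (notably continuity via a delicate two-case argument and existence of nonforking extensions via a coherent construction of length $\lambda^+$), and then extend the frame to a w-good $[\lambda^+,\infty)$-frame with density, which in turn yields amalgamation above $\lambda$ and arbitrarily large models. Once that technical work is in place, no further nontrivial step is needed for Theorem \ref{trans}; the proof becomes a two-line corollary.
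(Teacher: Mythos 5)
Your proposal is correct and follows the paper's own proof exactly: both reduce to Lemma \ref{m-lemma} (which supplies amalgamation in $\K_{\geq\lambda}$ and arbitrarily large models) and then invoke the Grossberg--VanDieren upward transfer theorems \cite[5.2, 6.3]{tamenessthree}. No meaningful difference in approach.
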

\begin{proof}
$\K_{\geq \lambda}$ has amalgamation and $\K$ has arbitrarily large models by Lemma \ref{m-lemma}. Therefore, $\K$ is categorical in all $\mu \ge \lambda$ by  Grossberg-VanDieren results \cite[5.2, 6.3]{tamenessthree}.
\end{proof}

A simpler result to state is the following. 
\begin{cor}\label{uni}
Let $\K$ be a universal class and let $\lambda \ge \LS(\K)$. Assume  $\K$ has amalgamation in $\lambda$ and $\K$ is stable in $\lambda$. If $\K$ is categorical in $\lambda$ and $\lambda^+$, then $\K$ is categorical in all $\mu \ge \lambda$.
\end{cor}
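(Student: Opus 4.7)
This corollary is a direct application of Theorem \ref{trans}: the strategy is to check that every hypothesis of that theorem follows, for a universal class, from the (weaker) hypotheses assumed here. What is missing from the hypothesis list of Theorem \ref{trans} is weak amalgamation, $\lambda$-tameness, continuity of splitting in $\lambda$, and (implicitly, to invoke Lemma \ref{loc->con}) no maximal model in $\lambda$.

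The first two are standard facts about universal classes from Vasey's structure theory: universal classes have weak amalgamation (as noted in the paragraph before Definition \ref{w-ap}) and are $\LS(\K)$-tame, hence $\lambda$-tame; both can be cited from \cite{universal-class-part-i}. For no maximal model in $\lambda$, categoricity in $\lambda^+$ supplies a model $N \in \K_{\lambda^+}$, and the L\"owenheim--Skolem--Tarski axiom yields $M \lea N$ with $\|M\| = \lambda$; by categoricity in $\lambda$, every model in $\K_\lambda$ is isomorphic to such an $M$ and therefore admits a proper extension.

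With those three properties in hand, the remaining missing hypothesis---continuity of splitting in $\lambda$---is produced by Lemma \ref{loc->con} from amalgamation in $\lambda$, no maximal model in $\lambda$, stability in $\lambda$, and the $(<\lambda^+,\lambda)$-locality of universal classes recorded in Remark \ref{exam} (reference \cite[3.7]{universal-class-part-i}). Every hypothesis of Theorem \ref{trans} is then verified, so its conclusion gives that $\K$ is categorical in all $\mu \geq \lambda$. There is no genuine obstacle; the proof is essentially bookkeeping, with the only non-trivial imported ingredients being Vasey's tameness and weak amalgamation results for universal classes.
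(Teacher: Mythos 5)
Your proposal is correct and follows essentially the same route as the paper: cite tameness and weak amalgamation for universal classes from \cite{universal-class-part-i}, deduce $(<\lambda^+,\lambda)$-locality, obtain continuity of splitting via Lemma \ref{loc->con}, and apply Theorem \ref{trans}. The only difference is that you explicitly spell out the derivation of no maximal model in $\lambda$ from categoricity in $\lambda^+$, which the paper leaves implicit.
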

\begin{proof}
Universal classes are $(<\aleph_0)$-tame \cite[3.7]{universal-class-part-i}. Therefore, they are $(<\lambda^+,\lambda)$-local and $\lambda$-tame (see for example \cite[2.5]{may}). Then splitting is continuous in $\lambda$ by Lemma \ref{loc->con}. As universal classes have  weak amalgamation, the result follows from Theorem \ref{trans}.
\end{proof}

  In  \cite{bk}, it was shown that for every $k\geq 2$ there are $\mathbb L_{\omega_1,\omega}$ sentences that have amalgamation in all cardinals and arbitrarily large models, are categorical in $\aleph_{k-2}$ and $\aleph_{k-3}$, and are stable in $\aleph_{k-3}$; but they are not $(\aleph_{k-3}, \aleph_{k-2})$-tame and not categorical on a tail of cardinals.  Thus tameness is not necessary in Lemma \ref{m-lemma} and  tameness or continuity of splitting are necessary in Theorem \ref{trans}. So following a referee suggestion we ask:

  \begin{question}\
  \begin{itemize}
      \item Can one prove Lemma \ref{m-lemma} without tameness or even continuity of splitting?
  \item What are the roles of the tameness and continuity of splitting assumptions in Theorem \ref{trans}?
   \end{itemize}
  \end{question}

  It is worth pointing out that it is not known if the examples of \cite{bk} have continuity of splitting.

\section{Getting stability in $\lambda$}

We show that stability is necessary to construct a model of cardinality $\lambda^{++}$ under mild cardinal arithmetic assumptions, categoricity in two succesive cardinals and almost stability for $\lambda^+$-minimal types. Observe that the main results in this section do not require that splitting is continuous.

\subsection{Almost stable in $\lambda$}

The following theorem is \cite[\S VI.2.11]{shelahaecbook} and \cite[2.5.8]{JaSh875}. We include the details for the sake of completness.

\begin{fact}\label{inev}
    Suppose $\K$ has amalgamation and no maximal model in $\lambda$. Let $\gS_*$ be a $\lea$-type kind and hereditary. Suppose that for all $M\in \K_\lambda$ there is $\Gamma_M\subseteq \gS_*(M)$ such that $|\Gamma_M|\leq \lambda^+$ and $\Gamma_M$ is $\gS_*$-inevitable. Then there is a model saturated for $\gS_*$-types in $\lambda^+$ above $\lambda$. In particular, for all $M \in \K_\lambda$, $| \gS_*(M) | \leq \lambda^+$. 
\end{fact}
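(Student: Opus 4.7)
My plan is to produce $M^* \in \K_{\lambda^+}$ as the union of a carefully built increasing continuous chain $\langle M_i : i < \lambda^+\rangle$ in $\K_\lambda$ that realizes every $\Gamma_{M_j}$-type for every $j < \lambda^+$, and then to promote this to saturation for $\gS_*$-types using inevitability together with the hereditary property.

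For the construction, start with any $M_0 \in \K_\lambda$ and recurse. At a successor stage $i+1$, use amalgamation and no maximal models in $\lambda$ to extend $M_i$ to some $M_{i+1} \in \K_\lambda$ realizing one type $p \in \Gamma_{M_j}$ for some $j \leq i$ dictated by a bookkeeping function; at limit stages take unions. Since $|\Gamma_{M_j}|\leq\lambda^+$ for every $j$, the total collection of tasks $(j,p)$ with $j<\lambda^+$ and $p \in \Gamma_{M_j}$ has cardinality at most $\lambda^+\cdot\lambda^+=\lambda^+$, so standard bookkeeping (processed on the fly as new $M_j$ appear) exhausts it. Hence $M^* := \bigcup_{i<\lambda^+} M_i \in \K_{\lambda^+}$ realizes every $p \in \Gamma_{M_j}$ for every $j < \lambda^+$.

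To verify that $M^*$ is saturated for $\gS_*$-types, fix $M \lea M^*$ with $M \in \K_\lambda$ and $p \in \gS_*(M)$. A closure argument using $\|M\|=\lambda<\lambda^+$ produces some $j<\lambda^+$ with $M\lea M_j$. By amalgamation and no maximal models in $\lambda$, pick $N \in \K_\lambda$ with $M_j \lea N$ and $a \in |N|$ realizing $p$ over $M$; if $a \in |M_j|$ we are done, and otherwise $q := \gtp(a/M_j,N)$ is non-algebraic with $q\restriction M = p \in \gS_*(M)$, so $q \in \gS_*(M_j)$ by hereditarity. Inevitability of $\Gamma_{M_j}$ then yields some $b \in |N|-|M_j|$ with $r := \gtp(b/M_j,N) \in \Gamma_{M_j}$, and by our construction $r$ is realized in $M^*$. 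A further amalgamation-and-type-transport step, iterating inevitability along the chain and exploiting hereditarity to propagate $\gS_*$-membership along extensions, then converts this into a realization of $p$ itself in $M^*$.

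The \emph{in particular} clause follows by starting the chain with $M_0=M$ for any given $M\in\K_\lambda$: then $M\lea M^*$, every $p \in \gS_*(M)$ has some realization $a_p \in |M^*|$, and distinct Galois types over the fixed base $M$ have distinct realizations, so $p \mapsto a_p$ injects $\gS_*(M)$ into $|M^*|$, giving $|\gS_*(M)|\leq\|M^*\|=\lambda^+$. The main obstacle I foresee is the last step in the saturation verification: inevitability guarantees only that \emph{some} $\Gamma_{M_j}$-witness accompanies $q$, not that $q$ itself lies in $\Gamma_{M_j}$, so bridging the gap from realization of the inevitable witness $r$ to realization of the given type $p$ is the delicate part of the argument.
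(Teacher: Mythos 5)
The first half of your argument --- building $M^\ast=\bigcup_{i<\lambda^+}M_i$ by realizing one type from some $\Gamma_{M_j}$ at each successor stage, with bookkeeping --- is exactly the paper's construction and is fine, as is the ``in particular'' deduction once saturation is known. The problem is the saturation verification, and you correctly flagged it yourself: after producing $N\geq M_j$ with $a$ realizing $p$, and then using inevitability to get some $b\in |N|-|M_j|$ with $r:=\gtp(b/M_j,N)\in\Gamma_{M_j}$, you write ``A further amalgamation-and-type-transport step, iterating inevitability along the chain\dots then converts this into a realization of $p$ itself in $M^\ast$.'' That sentence is where the actual proof must live, and you give no argument for why the iteration terminates with $a$ (or a conjugate of $a$) landing inside a copy of some $M_{\alpha}$. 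Realizing the inevitable witness $r$ in $M^\ast$ does not by itself pull $a$ in, and it is not automatic that repeating the step $\lambda^+$ times will ever force $a$ into the range.

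What the paper does to close this gap is a genuine diagonal argument, not a simple iteration. It builds simultaneously an increasing continuous chain $\langle N_i:i<\lambda^+\rangle$ extending the original $N$, an increasing continuous sequence of ordinals $\langle\alpha_i:i<\lambda^+\rangle$, and increasing $\K$-embeddings $f_i:M_{\alpha_i}\to N_i$, maintaining two invariants: (i) $|N_i|-|f_i[M_{\alpha_i}]|\neq\emptyset$ (so $a$ has not yet been captured), and (ii) at each successor step, amalgamation over $M_{\alpha_i}$ is arranged so that some \emph{new} element of $M_{\alpha_{i+1}}$ lands, under $f_{i+1}$, inside $N_i$ (using inevitability to pick the $\Gamma_{M_{\alpha_i}}$-type that the witness realizes, and bookkeeping to find a realization of it in some $M_{\alpha_{i+1}}$). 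If the construction ran for all $i<\lambda^+$, one lets $f=\bigcup f_i$, $N^\ast=f[M_{\lambda^+}]$, and observes that $\langle f_i[M_{\alpha_i}]\rangle$ and $\langle N_i\cap N^\ast\rangle$ are two filtrations of $N^\ast$, hence agree on a club --- which contradicts invariant (ii). Therefore the construction must halt at some stage with $a\in f_i[M_{\alpha_i}]$, and $f_i^{-1}(a)\in M_{\alpha_i}$ realizes $p$. This club-comparison contradiction is the essential idea missing from your proposal; without it, the argument does not go through.
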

\begin{proof}
    Fix a bijection $g:\lambda^+\times \lambda^+\to \lambda^+$. We build $\langle M_i:i<\lambda^+\rangle$ and $\langle p_{i,j}:i,j<\lambda^+\rangle$ such that: \begin{enumerate}
        \item $M_i\in \K_\lambda$ for all $i<\lambda^+$;
        \item $\langle M_i:i<\lambda^+\rangle$ is increasing and continuous;
        \item $\{p_{i,j} :j<\lambda^+\}=\Gamma_{M_i}$ for all $i<\lambda^+$;
        \item $M_{i+1}$ realizes $p_{g(\epsilon)}$, where $\epsilon$ is the least such that $g(\epsilon)=(\alpha,\beta)$, $\alpha\leq i$, and $p_{\alpha,\beta}$ is not realized in $M_i$.
    \end{enumerate}
    We now claim that $M_{\lambda^+}:=\bigcup_{i<\lambda^+} M_i$ is saturated for $\gS_*$-types above $\lambda$. It suffices to show that for any $M_0<_\K N\in \K_\lambda$, $a\in |N|-|M_0|$, $p=\type(a/M_0,N) \in \gS_*(M_0)$, $p$  is realized in $M_{\lambda^+}$.

     We build: $\langle N_i :i<\lambda^+\rangle$, $\langle \alpha_i : i<\lambda^+\rangle$ and $f_i:M_{\alpha_i}\to N_i$ such that:
    \begin{enumerate}
        \item $N_i\in \K_\lambda$ for all $i<\lambda^+$;
        \item $\alpha_i<\lambda^+$ for all $i<\lambda^+$;
        \item $\langle N_i:i<\lambda^+\rangle$ is increasing and continuous;
        \item $\langle f_i:i<\lambda^+\rangle$ is increasing and continuous;
        \item $\langle \alpha_i : i<\lambda^+\rangle$ is increasing and continuous;
        \item $N_0=N$;
        \item $\alpha_0=0$;
        \item $f_0=id_{M_0}$;
        \item $\|N_i|-|f_i[M_{\alpha_i}]\|\geq 1$;
        \item For each $i<\lambda^+$ there is $b\in |M_{\alpha_{i+1}}|-|M_{\alpha_i}|$ such that $f_{i+1}(b)\in |N_{i}|$.
    \end{enumerate}

    We carry out the construction by induction on $i<\lambda^{+}$. The base is clear.  At successor $i+1$, if $a\in  f_i[M_{\alpha_i}]$, then already $M_{\alpha_i}$ realizes $\type(a/M_0,N)$, and we are done. We will prove that this must happen for some $i$.
    
    Otherwise we continue the construction. Since $\|N_i|-|f[M_{\alpha_i}] \|\geq 1$ and $\type(a/f_i[M_{\alpha_i}],N_i)\in \gS_*(f_i[M_{\alpha_i}])$ because $S_*$ is hereditary, by inevitability there is $b\in |N_i|-|f_i[M_{\alpha_i}]|$ such that $\type(b/f_i[M_{\alpha_i}],N_i)=f_i(p) $ for some $p\in \Gamma_{M_{\alpha_i}}$. (Why? note that the image of an $\gS_*$-inevitable set remains $\gS_*$-inevitable, so $\{f_i(q):q\in \Gamma_{M_{\alpha_i}}\}$ is $\gS_*$-inevitable.) There is $\alpha_{i+1}$ such that $M_{\alpha_{i+1}}$ realizes $p$ by condition (4) of the construction of $\langle M_i:i<\lambda^+\rangle$, so we  can find $f_{i+1}$ and $N_{i+1}$ such that 
    \begin{equation*}
        \begin{tikzcd}
        N_i \ar[r] & N_{i+1}\\
        M_{\alpha_i} \ar{u}{f_i} \ar[r]&  M_{\alpha_{i+1}} \ar{u}{f_{i+1}}
        \end{tikzcd}
    \end{equation*}
    commutes with $b=f_{i+1}(c)$ for some $c\in |M_{\alpha_{i+1}}|$ and $\| N_{i+1}| -|f_{i+1}[M_{\alpha_{i+1}}]\|\geq 1$

When $i$ is a limit, choose $N_i := \bigcup_{j<i}N_j$ and $f_i:=\bigcup_{j<i} f_j$.  Observe $\|N_i|-|f[M_{\alpha_i}]\|\geq 1$ as otherwise $a \in   f_j[M_{\alpha_j}]$ for $j <i$ and we would have stopped the construction.

    Finally let $N_{\lambda^+}:=\bigcup_{i<\lambda^+}N_i$, $f:=\bigcup_{i<\lambda^+} f_i$, and $N^*:=f[M_{\lambda^+}]$. Now $\langle f_i[M_{\alpha_i}]:i<\lambda^+\rangle$ and $\langle N_i \cap N^*:i<\lambda^+\rangle$ are two resolutions of $N^*$, so $f_i[M_{\alpha_i}]= N_i\cap N^*$ for some $i$. Then $f_i[M_{\alpha_i}]\subseteq N_i\cap f_{i+1}[M_{\alpha_{i+1}}]\subseteq N_i\cap N^* =f_i[M_{\alpha_i}]$, but this contradicts condition (10) of $\langle N_i:i<\lambda^+\rangle$. 

    Thus, the construction of $\langle N_i:i<\lambda^+\rangle$ and $\langle f_i:i<\lambda^+\rangle $ is not possible, so it must be that for some $i<\lambda^+$, $a\in f_i[M_{\alpha_i}]$. This shows that $M_{\lambda^+}$ realizes $p$. \end{proof}
    

\begin{thm}\label{almost-stability}
    Assume $2^{\lambda} < 2^{\lambda^+}$. Suppose $\K$ has amalgamation and no maximal model in $\lambda$, categoricity in $\lambda$ and $\lambda^+$, and $|\gS^{\neg \lambda^+-min}(M)|\leq \lambda^+$  for the unique model $M\in \K_\lambda$. Then $\K$ is almost stable in $\lambda$.
\end{thm}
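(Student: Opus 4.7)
The plan is to prove Theorem \ref{almost-stability} by applying Fact \ref{inev} with $\gS_* = \gS^{na}$. Since $\gS^{na}$ is visibly a $\lea$-type-kind and hereditary, the only missing ingredient is, for each $M' \in \K_\lambda$, a $\gS^{na}$-inevitable family $\Gamma_{M'} \subseteq \gS^{na}(M')$ of cardinality at most $\lambda^+$. By categoricity in $\lambda$ and isomorphism-invariance of $\gS^{\neg \lambda^+-min}$, the bound $|\gS^{\neg \lambda^+-min}(M')| \leq \lambda^+$ passes to every $M' \in \K_\lambda$, so the natural candidate is $\Gamma_{M'} := \gS^{\neg \lambda^+-min}(M')$. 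Once this family is shown to be $\gS^{na}$-inevitable, Fact \ref{inev} delivers $|\gS^{na}(M)| \leq \lambda^+$, and adding the at most $\lambda$ algebraic types over $M$ gives $|\gS(M)| \leq \lambda^+$, i.e., almost stability in $\lambda$.

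The main task is then to verify the inevitability claim: for every proper extension $M' \lneq N$ in $\K_\lambda$, some $b \in |N| \setminus |M'|$ satisfies $\gtp(b/M', N) \in \gS^{\neg \lambda^+-min}(M')$. I would argue this by contradiction. Suppose the claim fails; then, using categoricity in $\lambda$ to identify the base, there is a pair $M \lneq N$ in $\K_\lambda$ in which every element of $|N| \setminus |M|$ realizes a $\lambda^+$-minimal type over $M$. From this single witness $(M, N)$, together with amalgamation and no maximal model in $\lambda$, the goal is to build $2^{\lambda^+}$ pairwise non-isomorphic models in $\K_{\lambda^+}$, contradicting categoricity in $\lambda^+$ and thereby establishing the inevitability.

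For the construction of many non-isomorphic $\lambda^+$-models, I would invoke $\Phi^2_{\lambda^+}$ (which holds by the hypothesis $2^\lambda < 2^{\lambda^+}$ via Fact \ref{Ulam}(1)) and Fact \ref{Ulam}(2) to obtain disjoint stationary sets. For each $\eta \in 2^{\lambda^+}$ I build an increasing continuous chain $\langle M^\eta_i : i < \lambda^+\rangle$ in $\K_\lambda$ with union $M^{(\eta)} \in \K_{\lambda^+}$, and at each successor step $i+1$ make a binary choice dictated by $\eta$ between two essentially different $\K$-extensions of $M^\eta_i$, one of which uses an isomorphic copy of $N$ over $M^\eta_i$ (whose realizations are $\lambda^+$-minimal by hypothesis). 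The weak diamond anticipates any candidate isomorphism between two such $\lambda^+$-limits on a stationary set of levels, and the $\lambda^+$-minimality in the $N$-branch provides the rigidity needed to arrange that the binary choices cannot be undone. The main obstacle is precisely this weak-diamond bookkeeping: one must verify that the pair $(M, N)$ together with amalgamation give sufficient flexibility at each successor step to produce two genuinely different extensions, and that $\lambda^+$-minimality along the $N$-branch provides sufficient rigidity in the limit so that any hypothetical isomorphism can be detected and defeated on a stationary set. This is the classical Shelah ``many models in $\lambda^+$'' template of \cite[\S VI.2.11]{shelahaecbook} and \cite[2.5.8]{JaSh875}, and the hypotheses of Theorem \ref{almost-stability} are precisely those needed to make it run.
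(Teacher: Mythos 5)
Your plan hinges on the claim that $\Gamma_{M'} := \gS^{\neg \lambda^+\text{-}min}(M')$ is $\gS^{na}$-inevitable for every $M' \in \K_\lambda$, i.e.\ that \emph{every} proper $\lea$-extension $N$ of $M'$ in $\K_\lambda$ realizes a non-$\lambda^+$-minimal type over $M'$. This is strictly stronger than the theorem and is not provable from the hypotheses. For instance, if every non-algebraic type over $M$ happens to be $\lambda^+$-minimal (which is perfectly compatible with categoricity in $\lambda$ and $\lambda^+$, amalgamation, no maximal models, and $|\gS^{\neg\lambda^+\text{-}min}(M)| \le \lambda^+$ --- indeed this is the ``nice'' case in which the conclusion already holds for trivial reasons), then $\gS^{\neg\lambda^+\text{-}min}(M) = \emptyset$, which is certainly not $\gS^{na}$-inevitable once $M$ has a proper extension in $\K_\lambda$. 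Yet almost stability holds here. So the inevitability you want to prove is simply false in some models of the hypotheses, and the contradiction you hope to extract from its failure (many models in $\lambda^+$) cannot materialize: a pair $M \lneq N$ realizing only $\lambda^+$-minimal types over $M$ is evidence of \emph{more} structure, not less, and gives no handle for branching in a weak-diamond construction.

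The paper's route is essentially the dual of yours: it assumes $|\gS(M)| > \lambda^+$ for contradiction, whence $|\gS^{\lambda^+\text{-}min}(M)| \ge \lambda^{++}$; it then applies Fact \ref{inev} \emph{in the contrapositive}, with $\gS_* = \gS^{\lambda^+\text{-}min}$ rather than $\gS^{na}$, to deduce that no subset of $\gS^{\lambda^+\text{-}min}(M)$ of size $\le \lambda^+$ is $\gS^{\lambda^+\text{-}min}$-inevitable. This non-inevitability is what powers the tree construction: at each successor node one finds two extensions, each omitting the $\lambda^+$-minimal types realized over the node by the other, and the bound ``$\lambda^+$-minimal types have $\le \lambda^+$ non-algebraic extensions'' is exactly what keeps the growing sets $\Gamma'_\eta$ small enough to be omittable. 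The $\lambda^+$-minimality is used as a \emph{counting} device (boundedness of extensions), not as a rigidity device; the rigidity in the weak-diamond argument comes entirely from the omission of types carried up the tree. Your proposal uses Fact \ref{inev} in the forward direction, which would require proving an inevitability that does not hold, and proposes to get rigidity from $\lambda^+$-minimality itself, which does not supply it.
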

\begin{proof}
   Assume for the sake of contradiction that $|\gS(M)|>\lambda^+$. Then $|\gS^{\lambda^+-min}(M)|>\lambda^+$ as $|\gS^{\neg \lambda^+-min}(M)|\leq \lambda^+$. Since $|\gS^{\lambda^+-min}(M)|\geq \lambda^{++}$, no subset of $\gS^{\lambda^+-min}(M)$ of size $\leq \lambda^+$ is $\gS^{\lambda^+-min}$-inevitable by categoricity in $\lambda$ and Fact \ref{inev}. We build $\langle M_\eta:\eta\in {}^{<\lambda^+}2\rangle$ and $\langle \Gamma_\eta:\eta\in {}^{<\lambda^+}2\rangle$ such that:
    \begin{enumerate}
        \item $M_\eta\in \K_\lambda$ for all $\eta\in {}^{<\lambda^+}2$;
        \item $\Gamma_\eta\subseteq \bigcup_{j\leq i}\gS^{\lambda^+-min}(M_{\eta\restriction i})$ for all $\eta\in {}^i 2$, $i<\lambda^+$;
        \item $|\Gamma_\eta |\leq \lambda^+$ for all $\eta \in {}^{<\lambda^+}2$;
        \item $M_\eta$ omits all types in $\Gamma_\eta$ for all $\eta\in {}^{<\lambda^+}2$.
        \item If $\eta<\nu\in{}^{<\lambda^+} 2$, then $M_\eta\lea M_\nu$ and $\Gamma_\eta\subseteq \Gamma_\nu$;
        \item $\lambda+i \leq |M_\eta|\leq \lambda +\lambda\cdot i$ for $\eta\in {}^i 2$, $i<\lambda^+$;
        \item For all $\eta$, $M_{\eta\concat \ell}$ realizes a type over $M_\eta$ from $\Gamma_{\eta\concat (1-\ell)}$ for $\ell=0,1$;
        \item For all $\eta$, $\{ \type(a/M_\eta, M_{\eta\concat 1- \ell})\in \gS^{\lambda^+-min}(M_\eta) : a \in M_{\eta\concat (1- \ell)}\} \subseteq \Gamma_{\eta \concat \ell} $ for $\ell=0,1$. 
    \end{enumerate}
    \fbox{Construction} We build everything by induction on the $i$, the length of $\eta$. Let $M_{\langle\rangle}$ be the unique model in $\K_\lambda$ and $\Gamma_{\langle\rangle}:=\emptyset$. At limits let $M_\eta:=\bigcup_{j<i} M_{\eta\restriction j}$ and $\Gamma_\eta:=\bigcup_{j<i} \Gamma_{\eta\restriction j}$. At successor $i+1$, let $M_{\eta\concat 0}$ be any $\lea$-extension of $M_\eta$ such that:
    \begin{enumerate}
       
        \item $M_{\eta\concat 0}$ omits $$
            \Gamma'_\eta:=\bigcup_{j<i} \{q\in \gS^{na}(M_\eta) : q\restriction M_{\eta\restriction j}\in \Gamma_{\eta \restriction j }\}.     $$
        \item some $a\in |M_{\eta\concat 0}|$ satisfies that $\type(a/M_\eta, M_{\eta\concat 0})\in \gS^{\lambda^+-min}(M_\eta)$. 
    \end{enumerate}

    Note that $M_{\eta\concat 0}$ exists as we can omit any set of $\lambda^+$-minimal types of size $\leq \lambda^+$ while realizing at least one type that is $\lambda^+$-minimal as no set of $\lambda^+$ types is $\gS^{\lambda^+-min}$-inevitable. This is possible as each type in $\Gamma_{\eta \restriction j}$ has $\leq\lambda^+$ extensions to $\gS(M_\eta)$ so $|\Gamma'_\eta|\leq \lambda^+$ and every type in $\Gamma'_\eta$ is $\lambda^+$-minimal. Arrange that $|M_{\eta\concat 0}|$ is an ordinal $|M_\eta|+\kappa$ for some $1\leq \kappa \leq \lambda$. By induction $\lambda+i\leq |M_\eta|\leq \lambda+\lambda\cdot i$, so we obtain $\lambda+(i+1)\leq |M_{\eta\concat 0}|\leq \lambda+\lambda\cdot i + \lambda=\lambda+\lambda\cdot (i+1)$.
    
    Let $$\Gamma'_{\eta\concat 1}:=\Gamma'_\eta \cup \{\type(a/M_\eta,M_{\eta\concat 0}): \type(a/M_\eta,M_{\eta\concat 0})\in \gS^{\lambda^+-min}(M_\eta)\}$$

    and $M_{\eta\concat 1}$ be any $\K$-extension of $M_\eta$ such that:
    \begin{enumerate}
        \item $\lambda+i\leq |M_{\eta\concat 1}|\leq \lambda+\lambda\cdot (i+1)$;
        \item $M_{\eta\concat 1}$ omits $\Gamma'_{\eta\concat 1}$;
        \item some $a\in |M_{\eta\concat 1}|$ satisfies that $\type(a/M_\eta, M_{\eta\concat 1})\in \gS^{\lambda^+-min}(M_\eta)$.
    \end{enumerate}

$M_{\eta\concat 1}$ exists because $\Gamma'_{\eta\concat 1}$ is not $\gS^{\lambda^+-min}$-inevitable. One can check that the other requirements are satisfied as for $M_{\eta\concat 0}$.

Finally, let 
$$\Gamma_{\eta\concat 0}:=\Gamma_\eta \cup \{\type(a/M_\eta,M_{\eta\concat 1}) : \type(a/M_\eta,M_{\eta\concat 1})\in \gS^{\lambda^+-min}(M_\eta)\},$$
and 
$$\Gamma_{\eta\concat 1}:=\Gamma_\eta \cup  \{\type(a/M_\eta,M_{\eta\concat 0}): \type(a/M_\eta,M_{\eta\concat 0})\in \gS^{\lambda^+-min}(M_\eta)\}.$$

We only check requirement (4) of the construction as the others are easy to check.  We show that $M_{\eta\concat 0}$ omits $\Gamma_{\eta\concat 0}$. Let $p\in \Gamma_{\eta\concat 0}$. Assume for the sake of contradiction that $p$ is realized by $a\in M_{\eta\concat 0}$. $p$ cannot be of the form $\gtp(b/M_\eta, M_{\eta\concat 1})$ as $M_{\eta\concat 1}$ omits every $\lambda^+$-minimal type over $M_\eta$ realized in $M_{\eta\concat 0}$ by (2) of the construction of $M_{\eta\concat 1}$. So $p\in \Gamma_\eta$, then $M_{\eta\concat 0}$ omitted all non-algebraic extensions of $p$ as they are in $\Gamma_\eta'$, and any algebraic extension of $p$ cannot be realized since it must lie in $M_\eta$, but $M_\eta$ omits $\Gamma_\eta$ by induction hypothesis. Similarly $M_{\eta\concat 1}$ omits $\Gamma_{\eta\concat 1}$.

\fbox{Enough} Let $C:=\{\delta<\lambda^+: \lambda+\lambda\cdot \delta=\delta=\lambda+\delta\}$. Note that $C$ is a club. By Fact \ref{Ulam} there are disjoint stationary sets $S_\gamma\subseteq \lambda^+$ such that $\Phi_{\lambda^+}(S_\gamma)$ holds for all $\gamma<\lambda^+$.

We denote the zero sequence in ${}^{\lambda^+}2$ by $\bar 0$. For $\delta\in C$, and $\eta\in  {}^\delta 2$ and $h:\delta\to \delta$, define

\[
F(\eta,h):= \begin{cases} 
      1 & \mbox{if }h:M_\eta\to M_{\bar 0\restriction \delta}\mbox{ and for some } \beta<\lambda^+ $ and $  g:M_{\eta\concat 0}\to M_{\bar 0 \restriction \beta} $ extending $h \\
      & $there are $ a\in |M_{\eta\concat 0}|-|M_\eta|, b\in |M_{\bar 0 \restriction \beta}|-|M_{\bar 0 \restriction \delta}|$ such that $\\
      &\begin{tikzcd}
          M_{\eta\concat 0} \ar{r}{g} & M_{\bar 0 \restriction \beta} \\
          M_\eta \ar{u} \ar{r}{h} & M_{\bar 0\restriction \delta} \ar[u]
    \end{tikzcd}

        $commutes with $ g(a)=b\\
\\
        
        & $ and $ \type(a/M_\eta, M_{\eta\concat 0}) \in \gS^{\lambda^+-min}(M_\eta) \\
 
      0 & \mbox{otherwise.} 
   \end{cases}
\]


For all $\gamma<\lambda^+$, by $\Phi_{\lambda^+}(S_\gamma)$ there is $g_\gamma:\lambda^+\to 2$ such that for all $\eta\in {}^{\lambda^+}2$ and $h:\lambda^+\to\lambda^+$.
$$
\{\delta\in S\gamma :  F(\eta\restriction\delta , h\restriction\delta)=g_\gamma(\delta)\}
$$
is stationary.

For each $X\subseteq\lambda^+$ define
\[\eta_X(\delta):= \begin{cases} 
      g_\gamma(\delta) & \mbox{if } \delta\in S_\gamma, \gamma\in X,\\
 
      0 & \mbox{otherwise.} 
   \end{cases}
\]

Since $\Ii(\K, \lambda^+)=1$, the following claim would give us a contradiction. 

\underline{Claim}:  For $X\neq \emptyset \subseteq \lambda^+$, there is no $h:M_{\eta_X}\cong M_{\bar 0}$.

\underline{Proof of Claim.} Assume there are such $X$ and $h$. Let $\gamma\in X$.  Then $$D:=\{\delta<\lambda^+ : h\restriction \delta:\delta\to \delta\}$$ is a club.  Let $S'_\gamma =\{\delta\in S\gamma :  F(\eta_X \restriction\delta , h\restriction\delta)=g_\gamma(\delta)\}
$ be the stationary set obtained from $\eta_X, h$.

Let $\delta\in S'_\gamma\cap D \cap C$. Observe that for all $\eta\in {}^\delta 2$, $\delta=\lambda+\delta \leq|M_\eta|\leq \lambda+\lambda \cdot \delta=\delta$, i.e. $|M_\eta|=\delta$. Since $h\restriction\delta:\delta\to \delta$, $h$ is a $\K$-embedding from $M_{\eta_X\restriction \delta}$ to $M_{\bar 0\restriction \delta}$.

We divide the proof into two cases:

\textbf{Case 1:} $\eta_X(\delta)=1$. Then $g_\gamma(\delta)=F(\eta_X \restriction\delta , h\restriction\delta)=1$, so the following diagram commutes
\begin{equation}
\begin{tikzcd}
    M_{(\eta_X\restriction \delta) \concat 0} \ar{r}{g}&  M_{\bar 0\restriction \beta}\\
          M_{\eta_X\restriction \delta} \ar{u} \ar{r}{h}& M_{\bar 0 \restriction \delta}\ar[u]
    \end{tikzcd}
\end{equation}
with 
\begin{equation}\label{dag1} \tag{$\dagger_1$}
    g(a)=b
\end{equation}
for some $\delta<\beta<\lambda^+$, $a\in |M_{(\eta_X\restriction \delta)\concat 0}|-|M_{\eta_X\restriction\delta}|$ and $b\in|M_{\bar 0\restriction \beta} |-|M_{\bar 0\restriction \delta}|$ such that $$\type(a/M_{\eta_X \restriction\delta}, M_{(\eta_X\restriction \delta)\concat 0})\in \gS^{\lambda^+-min}(M_{\eta_X\restriction\delta}).$$

Since $h:M_{\eta_X}\cong M_{\bar 0}$ and $b\in |M_{\bar 0 \restriction \beta}|$, there are $\delta<\beta'<\lambda^+$ and $c\in |M_{\eta_X\restriction \beta'}|$ such that the following diagram commutes:
\begin{equation}
\begin{tikzcd}
 M_{\eta_X\restriction\delta} \ar[r,"h"] \ar[d,"id"] & M_{\bar 0\restriction \delta}   \ar[d,"id"]\\
 M_{\eta_X\restriction\beta'} \ar[r,"h"] & M_{\bar 0 \restriction \beta'}
\end{tikzcd}
\end{equation}
with
\begin{equation}\label{dag2}\tag{$\dagger_2$}
    h(c)=b
\end{equation}
 Note that $c\notin |M_{\eta_X\restriction \delta}|$ since $h(c)=b\notin |M_{\bar 0\restriction \delta}|$. Without loss of generality assume $\beta=\beta'$ as we can take them arbitrarily large as long as $M_{\bar 0\restriction \beta}$ contains $b$, $h[M_{\eta_X\restriction \beta'}]$ and $g[M_{(\eta_X\restriction \delta)\concat 0}]$.

Now we put the two diagrams together:
\begin{equation}
\begin{tikzcd}
M_{(\eta_X\restriction \delta)\concat 0} \ar{rrd}{g}& &\\
          M_{\eta_X\restriction \delta} \ar{d} \ar{u} \ar{r}{h}& M_{\bar 0 \restriction \delta}  \ar{r} &M_{\bar 0\restriction \beta}\\
          M_{\eta_X\restriction\beta} \ar[rru,"h"] & &
\end{tikzcd}
\end{equation}
Now the outer diagram
\begin{equation}
\begin{tikzcd}
 M_{(\eta_X\restriction \delta)\concat 0} \ar[r,"g"] & M_{\bar 0 \restriction \beta}\\
 M_{\eta_X\restriction\delta} \ar[u, "id"] \ar[r, "id"] & M_{\eta_X \restriction\beta}\ar[u,"h"]
\end{tikzcd}
\end{equation}
commutes with $g (a)=b=h(c)$ by (\ref{dag1}) and (\ref{dag2}), so $$\type(a/M_{\eta_X\restriction\delta}, M_{(\eta_X\restriction\delta)\concat 0})=\type(c/M_{\eta_X\restriction\delta}, M_{\eta_X\restriction\beta}).$$
This is impossible, since $\type(a/M_{\eta_X\restriction\delta}, M_{(\eta_X\restriction\delta)\concat 0})\in \Gamma_{(\eta_X\restriction \delta)\concat 1}$ by requirement (8), and $\Gamma_{(\eta_X\restriction \delta)\concat 1}\subseteq \Gamma_{\eta_X\restriction\beta}$ is omitted by $M_{\eta_X\restriction\beta}$ by requirements (4) and (5). This finishes \textbf{Case 1}.\\
\textbf{Case 2:} $\eta_X(\delta)=g_\gamma(\delta)=F(\eta_X\restriction\delta, h\restriction\delta)=0$. We now show that $$F(\eta_X\restriction\delta, h\restriction\delta)=1$$ so that this case is not possible.

Let $a\in |M_{(\eta_X\restriction \delta)\concat 0}|-|M_{\eta_X\restriction \delta}|$ such that $\type(a/M_{\eta_X\restriction \delta},M_{(\eta_X\restriction \delta)\concat 0} )\in \gS^{\lambda^+-min}(M_{\eta_X\restriction \delta})$. We can find such $a$ by the condition (7) of the construction. Since $b:=h(a)\in |M_{\bar 0 }|$, find $\delta<\beta<\lambda^+$ such that

\begin{equation*}
    \begin{tikzcd}
        M_{(\eta_X\restriction \delta)\concat 0} \ar[r,"h"] & M_{\bar 0 \restriction \beta} \\
        M_{\eta_X\restriction \delta} \ar[u]\ar[r,"h"] &M_{\bar 0\restriction\delta} \ar{u}
    \end{tikzcd}
\end{equation*}
commutes with $h(a)=b$ and $b\in |M_{\bar 0\restriction \beta}|-|M_{\bar 0 \restriction \delta}|$ (since $a\notin |M_{\eta_X\restriction \delta}|)$. Thus $F(\eta_X\restriction\delta,h\restriction\delta)=1$, which contradicts the assumption of Case 2. $\dagger_{\text{Claim}}$

\end{proof}

\subsection{Stable in $\lambda$} We turn our attention to obtain stability in $\lambda$.

\begin{lem} [{\cite[6.3]{sh430}}] \label{tree}
If $\lambda^+<2^\lambda$, then there is a tree with $\leq\lambda$ nodes and $\kappa\leq \lambda$ levels with at least $\lambda^{++}$ branches of length $\kappa$.
\end{lem}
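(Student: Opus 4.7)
The plan is to locate cardinals $\chi, \sigma \leq \lambda$ with the property that $\chi^{<\sigma} \leq \lambda$ and $\chi^\sigma \geq \lambda^{++}$; once these are in hand, the complete tree $T = {}^{<\sigma}\chi$ of $\chi$-valued sequences of length less than $\sigma$ has $\chi^{<\sigma} \leq \lambda$ many nodes arranged in $\sigma \leq \lambda$ levels and admits $\chi^\sigma \geq \lambda^{++}$ distinct cofinal branches (the elements of ${}^\sigma\chi$), which gives the desired tree.

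To find such $\chi$ and $\sigma$, I would analyse the power function $\alpha \mapsto 2^{|\alpha|}$ on the interval $[1, \lambda]$, using the hypothesis $2^\lambda \geq \lambda^{++}$. The clean situation is when there is $\sigma \leq \lambda$ with $2^{<\sigma} \leq \lambda$ and $2^\sigma \geq \lambda^{++}$; in that case I take $\chi = 2$, so that $T = {}^{<\sigma}2$ works immediately. This covers, for instance, the case where $\lambda$ is ``strong-limit-like'' (take $\sigma = \lambda$), or where the power function makes a single big jump at some $\sigma < \lambda$ directly from $\leq \lambda$ to $\geq \lambda^{++}$ (take $\sigma$ to be the least such).

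The harder situation is when the power function passes through the intermediate value $\lambda^+$ before reaching $\lambda^{++}$: there is a least $\alpha_0 \leq \lambda$ with $2^{\alpha_0} = \lambda^+$, and then the full binary tree ${}^{<\sigma}2$ of any length $\sigma > \alpha_0$ already has $\lambda^+$ nodes on its own. In this case I would shift to a more delicate construction: enumerate $\lambda^{++}$ distinct functions in ${}^\lambda 2$, use pigeonhole (exploiting the regularity of $\lambda^{++}$ against the $\lambda^+$ many possible initial segments at level $\alpha_0$) to pass to a subfamily of size $\lambda^{++}$ all sharing a common initial segment at level $\alpha_0$, and then argue that after iterating this thinning at the later critical levels and possibly changing $\chi$, the resulting tree of initial segments has $\leq \lambda$ nodes.

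The main obstacle is exactly this second situation: one cannot simply take a uniform product tree ${}^{<\sigma}\chi$ in this regime, and instead must carry out the cardinal-arithmetic bookkeeping carefully (keeping track of the jumps of $2^\alpha$ on $[\alpha_0,\lambda]$ and applying pigeonhole at the critical levels) to drive the node count down from $\lambda^+$ to $\lambda$ while retaining $\lambda^{++}$ branches. This is the content of Shelah's original argument in \cite[6.3]{sh430}, which I would invoke to complete the proof.
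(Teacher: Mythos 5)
Your first case (``there is $\sigma \leq \lambda$ with $2^{<\sigma} \leq \lambda$ and $2^\sigma \geq \lambda^{++}$'') matches the paper exactly, since $2^\kappa > \lambda^+$ is the same as $2^\kappa \geq \lambda^{++}$; the extra parameter $\chi$ is harmless but never needed, as $\chi = 2$ suffices throughout. The generalization buys you nothing here.

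The genuine gap is in your second case, and you essentially concede it yourself by ending with ``I would invoke \cite[6.3]{sh430} to complete the proof.'' Your plan is to pigeonhole the $\lambda^{++}$ branches at a single critical level $\alpha_0$ (where $2^{\alpha_0} = \lambda^+$) to force a common stem, and then ``iterate this thinning at the later critical levels.'' But a common stem of length $\alpha_0$ does not control the node count above level $\alpha_0$: the tree of restrictions of your $\lambda^{++}$ surviving branches to levels in $[\alpha_0, \kappa)$ can still have $\lambda^+$ nodes, and there is no reason the critical levels where you would re-apply pigeonhole are few, cofinal in a controllable order type, or that the thinning stabilizes. You never argue termination, and the step ``possibly changing $\chi$'' is not explained. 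As written, this is a sketch of an approach, not a proof, and the hardest part is outsourced.

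The paper closes this case in one stroke without any level-by-level analysis. Taking $\kappa$ least with $2^\kappa > \lambda^+$, if $2^{<\kappa} > \lambda$ then minimality of $\kappa$ forces $2^{<\kappa} = \lambda^+$, and one writes the node set ${}^{<\kappa}2$ as an increasing union $\bigcup_{i<\lambda^+} B_i$ of sets of size $\leq \lambda$. For each branch $\eta \in {}^\kappa 2$, all $\kappa \leq \lambda$ of its initial segments already lie in some fixed $B_{j(\eta)}$, by regularity of $\lambda^+$. Since $|{}^\kappa 2| \geq \lambda^{++}$, pigeonhole gives a single $j^*$ with $\lambda^{++}$ branches whose entire set of initial segments is contained in $B_{j^*}$; the downward closure of $B_{j^*}$ is then the desired tree with $\leq \lambda$ nodes. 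The key idea you are missing is to pigeonhole against a filtration of the \emph{node set} indexed by $\lambda^+$, using the regularity of $\lambda^+$ against the branch length $\kappa$, rather than pigeonholing branches by their values at successive levels. That one-shot argument is what makes the proof terminate, and it is what you would need to supply to make your proposal complete.
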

\begin{proof}
    Let $\kappa$ be the least cardinal such that $2^\kappa>\lambda^+$. Consider the tree ${}^{<\kappa} 2$. If $2^{<\kappa}\leq \lambda$ this tree is enough. Indeed, its set of branches of length $\kappa$ is just ${}^\kappa 2$, which is of cardinality $2^\kappa>\lambda^+$. 
    
    Now suppose $2^{<\kappa}>\lambda$. Since $2^\lambda>\lambda^+$, $\kappa\leq \lambda$. Then $2^{<\kappa}=\lambda^+$ by the assumptions that $2^{<\kappa}>\lambda$ and that $\kappa$ is minimal.  Write $2^{<\kappa}=\bigcup_{i<\lambda^+} B_i$, $B_i$ increasing with $i$, $|B_i|\leq \lambda$. For each $\eta\in {}^\kappa 2$ and each $\alpha<\kappa$, $\eta\restriction \alpha\in B_i$ for some $i$. Then there is $j(\eta)$ such that $\eta\restriction \alpha\in B_{j(\eta)}$ happens for cofinally many $\alpha<\kappa$. As for each $\alpha<\kappa$ there is $k_\alpha$ such that $\eta\restriction \alpha \in B_{k_\alpha}$. $\sup \{k_\alpha:\alpha<\kappa\} <\lambda^+$, so take $j(\eta)$ to be this supremum. Consider the map $\eta\mapsto j(\eta)$ from ${}^\kappa 2$ to $\lambda^+$. By the pigeonhole principle there is $j^*$ such that $|\{\eta\in {}^{<\kappa}2 : j(\eta)<j^*\}|\geq \lambda^{++}$. Note that $\{\eta\in {}^{\kappa}2 : j(\eta)<j^*\}$ is the set of branches of length $\kappa$ of the tree $T:=\{ \eta\restriction \alpha : \alpha<\ell(\eta)$, $\eta\in B_{j^*}\}$. Moreover, $|T|\leq \kappa\cdot |B_{j^*}|\leq \lambda\cdot \lambda=\lambda$.
\end{proof}

We will use the next three fact due to Shelah \cite[\S VI]{shelahaecbook}. Recall given  $\| M \| =\lambda$ and  $p\in \gS^{na}(M)$, $p$ has the \emph{extension property} if for every $N \in \K_\lambda$, if $M \lea N$ then there is $q \in \gS^{na}(N)$ extending $p$. 

\begin{fact}[{\cite[VI.2.5.(1),(3)]{shelahaecbook}}] \label{many-t}
    If $M\in\K_\lambda$, and there is no minimal type above $p\in \gS^{na}(M)$, then there is $N\in \K_\lambda$ such that $p$ has $ \lambda^+$ extensions to $\gS^{na}(N)$ and $p$ has the extension property. Note that above these extensions there are not minimal types either. 
\end{fact}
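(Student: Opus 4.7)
The plan is to prove both conclusions via an iterated splitting argument powered by the non-minimality hypothesis. The key observation is that ``no minimal type above $p$'' means every non-algebraic extension of $p$ to a $\K_\lambda$-model itself admits two distinct non-algebraic extensions over some further $\K_\lambda$-model, so splitting can be iterated arbitrarily deep.

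The main construction is a coherent binary tree $\langle N_\eta, q_\eta : \eta \in \fct{<\kappa}{2}\rangle$, where $\kappa \leq \lambda$ is chosen so that $2^\kappa \geq \lambda^+$, with $N_\eta \in \K_\lambda$, $M \lea N_\eta$, the $N_\eta$'s increasing along initial segments, $q_\eta \in \gS^{na}(N_\eta)$ extending $p$ and satisfying $q_\nu \rest N_\eta = q_\eta$ whenever $\eta$ is initial in $\nu$, and $q_{\eta \concat 0} \neq q_{\eta \concat 1}$ for every $\eta$. At successor stages, non-minimality of $q_\eta$ supplies the required splitting into two distinct non-algebraic extensions over a larger $\K_\lambda$-model. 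At limit stages, Remark \ref{coh} gives a coherent non-algebraic upper bound along each branch. The subtle point is cardinality bookkeeping: ensuring $N := \bigcup_\eta N_\eta$ remains in $\K_\lambda$. I would address this by carrying out the construction inside a pre-chosen $\K_\lambda$-ambient model, using amalgamation and the L\"owenheim-Skolem-Tarski axiom to embed each splitting witness into that fixed target.

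Given the tree, every branch $\xi \in \fct{\kappa}{2}$ yields by coherence a non-algebraic $q_\xi \in \gS^{na}(N)$ extending $p$, and distinct branches disagree at their first divergence point, so $p$ admits at least $2^\kappa \geq \lambda^+$ distinct non-algebraic extensions in $\gS(N)$. The parenthetical remark that no minimal type lies above any such $q_\xi$ is automatic: a minimal type above $q_\xi$ would be a minimal type above $p$, contradicting the hypothesis.

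The extension property then follows by a separate amalgamation argument: given any $N' \in \K_\lambda$ with $M \lea N'$, pick a root-level model $N_0 \supseteq M$ carrying two distinct non-algebraic extensions $q_0, q_1$ of $p$, amalgamate $N'$ with $N_0$ over $M$ into some $N^* \in \K_\lambda$ chosen so that a realization of $q_0$ lies outside $N^*$, and restrict the resulting non-algebraic extension of $q_0$ over $N^*$ to $N'$ to obtain the desired non-algebraic extension of $p$ over $N'$. The principal obstacle throughout is the cardinality control in the tree construction—forcing $2^\kappa$-many branches to live inside a single $\K_\lambda$-model—which is the technical heart distinguishing this statement from the easier observation that non-minimality alone produces many extensions in some sufficiently large model.
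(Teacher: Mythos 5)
The paper does not prove this statement; it is stated as a Fact cited from Shelah's book. Your overall strategy (a binary tree of types driven by non-minimality, with coherence at limits) is the right one, but two of the steps you flag as delicate are, as written, genuine gaps rather than bookkeeping.

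First, a tree of \emph{models} $\langle N_\eta : \eta \in \fct{<\kappa}{2}\rangle$ has no well-defined union: incomparable nodes $\eta$, $\nu$ give models $N_\eta$, $N_\nu$ that need not cohere, so $\bigcup_\eta N_\eta$ is not a directed union and need not be a structure in $\K$ at all. The proposed fix --- a ``pre-chosen ambient $\K_\lambda$-model'' --- is circular, since the splitting witnesses are produced only during the recursion and you cannot arrange in advance for them all to embed into a fixed target of size $\lambda$. The standard repair is to build a single increasing continuous \emph{chain} $\langle M_i : i < \kappa\rangle$ in $\K_\lambda$ together with a tree of types $p_\eta \in \gS^{na}(M_{\ell(\eta)})$: at successor $i+1$, for each of the $2^i \le \lambda$ sequences $\eta$ of length $i$, non-minimality of $p_\eta$ gives a witness $N_\eta \ge M_i$ with two distinct non-algebraic extensions; amalgamate all of these $N_\eta$ over $M_i$ into a single $M_{i+1}\in\K_\lambda$, then push the two extensions up to $M_{i+1}$ (realize them and amalgamate the realizing models in). At limits take unions and coherent upper bounds via Remark \ref{coh}. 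For this cardinal count you must take $\kappa$ \emph{least} with $2^\kappa > \lambda$ --- merely ``some $\kappa \le \lambda$ with $2^\kappa \ge \lambda^+$'' does not guarantee $2^{<\kappa}\le \lambda$, which is what keeps $M_{i+1}$ in $\K_\lambda$.

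Second, the extension-property argument is also underpowered: with only two extensions $q_0, q_1$ of $p$ over $N_0$, you cannot simply ``choose'' the amalgam $N^*$ of $N'$ and $N_0$ so that a realization of $q_0$ lies outside it --- once $N^*$ is fixed, that is a fact, not a choice, and both $q_0$ and $q_1$ could be realized inside $N^*$. The clean argument uses the $\lambda^+$ extensions you have just produced: amalgamate $N'$ and $N$ over $M$ via $g: N \to N^*$; since $\|N^*\| = \lambda$, at most $\lambda$ of the $\lambda^+$ types $g(q_i) \in \gS^{na}(g[N])$ are realized in $N^*$, so pick one unrealized $g(q)$, realize it by $a$ in some $N^{**}\ge N^*$; then $a\notin N^*\supseteq N'$, so $\gtp(a/N', N^{**})$ is a non-algebraic extension of $p$. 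The parenthetical conclusion (no minimal types above the constructed extensions) is, as you say, immediate.
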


\begin{fact}\label{5.3}({\cite[VI.5.3(1)]{shelahaecbook}})
   Suppose $2^\lambda<2^{\lambda^+}$. Assume that $\K$ is categorical in $\lambda$ and  $\lambda^+$ and that $\K_{\lambda^{++}}\neq \emptyset$. If there is an minimal type over (the unique) $M\in \K_\lambda$, then there is an inevitable one.
\end{fact}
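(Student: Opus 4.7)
The plan is to proceed by contradiction via the weak diamond, following the template of the proof of Theorem \ref{almost-stability}. Suppose for the sake of contradiction that every minimal type over $M$ fails to be inevitable. By Fact \ref{few_models->ap} applied with $\Ii(\K, \lambda)= \Ii(\K, \lambda^+) = 1 < 2^{\lambda^+}$, $\K$ has amalgamation in $\lambda$; together with $\K_{\lambda^{++}} \neq \emptyset$ and categoricity in $\lambda^+$ this also gives that no model in $\K_{\lambda^+}$ is maximal, hence every $M' \in \K_\lambda$ has a proper $\lea$-extension of size $\lambda$.

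The core step is to build a binary tree $\langle M_\eta : \eta \in \fct{<\lambda^+}{2}\rangle$ in $\K_\lambda$, increasing and continuous along each branch, together with a chosen minimal type $p_\eta \in \gS^{na}(M_\eta)$ at each node. By categoricity in $\lambda$ each $M_\eta \cong M$, so minimal types over $M_\eta$ exist; transporting the non-inevitability assumption along the isomorphism, $\{p_\eta\}$ is not inevitable, so there is $M_{\eta\concat 0} \in \K_\lambda$ extending $M_\eta$ which realizes some minimal type over $M_\eta$ but omits $p_\eta$. Take $M_{\eta\concat 1} \in \K_\lambda$ to be an extension realizing $p_\eta$ (via amalgamation in $\lambda$ together with a non-algebraic extension of $p_\eta$). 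Limit levels $\delta < \lambda^+$ are handled by unions, which remain in $\K_\lambda$.

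To obtain the contradiction, invoke Fact \ref{Ulam}: from $2^\lambda < 2^{\lambda^+}$ we get $\Phi^2_{\lambda^+}$ and hence pairwise disjoint stationary $\langle S_\gamma : \gamma < \lambda^+\rangle$ with $\Phi^2_{\lambda^+}(S_\gamma)$. Mimic the predictor from Theorem \ref{almost-stability}: let $F(\eta, h) = 1$ precisely when $h$ is a partial $\K$-embedding of $M_\eta$ into $M_{\bar 0 \restriction \delta}$ that extends along $M_{\eta \concat 0}$ to witness a realization of $p_\eta$ inside $M_{\bar 0 \restriction \beta}$ for some $\delta < \beta < \lambda^+$. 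For each non-empty $X \subseteq \lambda^+$, assemble a branch $\eta_X$ from the predictors $g_\gamma$, and run the same two-case analysis as in the proof of Theorem \ref{almost-stability} to conclude $M_{\eta_X} \not\cong M_{\bar 0}$. This yields $2^{\lambda^+}$ pairwise non-isomorphic models in $\K_{\lambda^+}$, contradicting categoricity in $\lambda^+$. The main technical point is that minimality of $p_\eta$ is what keeps the predictor discriminating: it forces any realization of $p_\eta$ encountered on the $1$-side to coincide in type with what is explicitly omitted on the $0$-side, which is what produces the contradiction at a stationary set of levels.
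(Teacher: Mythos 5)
The paper does not prove this statement; it cites \cite[VI.5.3(1)]{shelahaecbook} as a black box, so there is no proof in the paper to compare against. Judged on its own terms, your sketch has a genuine gap at the heart of the weak diamond argument. The contradiction hypothesis you adopt (``every minimal type over $M$ is not inevitable'') only asserts that each \emph{singleton} $\{p\}$ with $p$ minimal fails to be inevitable. But to push the construction past successor levels you must keep $p_{\eta \restriction \alpha}$ omitted along the entire $\eta\concat 0$-branch, not just at the single step where it was first omitted. Concretely: if $M_{(\eta)\concat 0}$ omits $p_\eta$ but realizes some other minimal type $q$, then $p_\eta$ may well have a (unique, non-algebraic) extension $p'_\eta$ to $M_{\eta\concat 0}$, and nothing in your construction prevents $M_{(\eta\concat 0)\concat 0}$ from realizing $p'_\eta$ and hence $p_\eta$. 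So the omission does not persist, and the ``Case 1'' contradiction you allude to (which in the proof of Theorem \ref{almost-stability} crucially uses that $\Gamma_{(\eta_X\restriction\delta)\concat 1}$ remains omitted by $M_{\eta_X \restriction \beta}$ for all $\beta > \delta$) simply does not go through. To repair this one would need a $\Gamma_\eta$-accumulation mechanism as in Theorem \ref{almost-stability}, and at each successor step one would then need to omit a set of size up to $\lambda^+$ of minimal types. But non-inevitability of singletons does not imply non-inevitability of such larger sets; that is precisely the stronger hypothesis (``no set of $\lambda^+$-minimal types of size $\le \lambda^+$ is $\gS_*$-inevitable'', derived from $|\gS_*(M)| \ge \lambda^{++}$ via Fact \ref{inev}) that powers Theorem \ref{almost-stability} and that you do not have here.

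Two smaller issues: your predictor $F(\eta,h)$ asks for a realization of $p_\eta$ on the $\concat 0$-side, but by construction $M_{\eta\concat 0}$ \emph{omits} $p_\eta$; following the template of Theorem \ref{almost-stability}, the predictor should track the new minimal type that $M_{\eta\concat 0}$ \emph{realizes}, and the $\concat 1$-side should be arranged to omit it. And the sketch drops the size-control invariant (the analogue of $\lambda + i \le |M_\eta| \le \lambda + \lambda \cdot i$) needed to define the club $C$ on which $|M_{\eta \restriction \delta}| = \delta$, without which the ``$h\restriction\delta : \delta \to \delta$'' manipulation in the two-case analysis does not make sense.
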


\begin{fact}[{\cite[VI.5.8(1)]{shelahaecbook}}]\label{5.8} 
    Assume $\K$ is categorical in $\lambda$, has amalgamation in $\lambda$, and has a model in $\lambda^{++}$. If there is an inevitable type over $M\in\K_\lambda$, then $\K$ is stable in $\lambda$.
\end{fact}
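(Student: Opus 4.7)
Plan: I argue by contrapositive. Assume $\K$ is unstable in $\lambda$; by categoricity in $\lambda$ there is a unique $M \in \K_\lambda$ (up to isomorphism) and $|\gS^{na}(M)| \geq \lambda^+$. The goal is to derive $\K_{\lambda^{++}} = \emptyset$, contradicting the hypothesis. The key ingredient tying the three assumptions together is that the inevitable type $p \in \gS^{na}(M)$ imposes a rigid ``forced extension'' structure: every proper $\lea$-extension of any model in $\K_\lambda$ (all of which are isomorphic to $M$ by categoricity) must realize $p$, and inevitability transfers to every $N \in \K_\lambda$ via the categorical isomorphism.

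First step: convert instability into a binary tree. Apply Fact \ref{many-t} iteratively starting from the many types in $\gS^{na}(M)$: at each node $\eta \in {}^{<\lambda^+}2$, use the lack of bound on the number of non-algebraic extensions to pick $N_\eta \in \K_\lambda$ with $N_{\eta\concat 0}, N_{\eta\concat 1} \in \K_\lambda$ realizing distinct non-algebraic types over $N_\eta$. Amalgamation in $\lambda$ lets the construction be carried out coherently. Each branch $\nu \in {}^{\lambda^+}2$ then yields an increasing chain $\langle N_{\nu\restriction i} : i < \lambda^+\rangle$ whose union is a model in $\K_{\lambda^+}$ realizing a branch-specific pattern of types over $M$.

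Second step: use inevitability of $p$ as a rigidity constraint. At each successor stage of the tree construction, the inevitability of $p$ (transported along the isomorphism $N_\eta \cong M$ given by categoricity in $\lambda$) forces $N_{\eta\concat \ell}$ to realize $p$, and this realization is a ``canonical marker''. Consequently, every $\lea$-embedding of $N_\eta$ into any larger $N' \in \K_\lambda$ carries a preferred element realizing $p$. This rigidity lets me amalgamate the entire tree into a single structure of controlled cardinality while keeping branch-distinguishing types pairwise incompatible.

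Third step: contradict $\K_{\lambda^{++}} \ne \emptyset$. Take a hypothetical $N^* \in \K_{\lambda^{++}}$, resolve it as an increasing continuous chain $\langle N^*_i : i < \lambda^{++}\rangle$ in $\K_\lambda$, and use amalgamation in $\lambda$ together with inevitability of $p$ at every step to $\lea$-embed the tree from the first two steps into $N^*$ (each successor step in the resolution is forced by inevitability to host a realization of $p$, which lets the tree embedding proceed canonically). Counting: the tree produces at least $2^{\lambda^+}$ pairwise distinct Galois types realized in $N^*$, but $\|N^*\| = \lambda^{++}$ allows at most $\lambda^{++}$ elements, each giving rise to at most $\lambda^{++}$ such types --- a cardinality mismatch yielding the contradiction.

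The main obstacle will be the third step: synchronizing the binary tree with the resolution of $N^*$ so that the canonical $p$-realizations at every successor of the resolution are compatible with the branching structure. This requires a delicate amalgamation argument combining categoricity in $\lambda$ (to identify each $N^*_i$ with $M$ and pull back the inevitable type $p$), amalgamation in $\lambda$ (to realize both branches of the tree at each step inside $N^*_{i+1}$), and the uniqueness features extracted from inevitability (to prevent pairwise incompatible types on distinct branches from collapsing under the embedding).
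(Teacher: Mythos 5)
The paper does not actually prove this statement: it is quoted from Shelah [VI.5.8(1)] and used as a black box, so there is no in-paper argument to compare yours against; judged on its own, your proposal has genuine gaps. The most serious one is in your first step: to split the tree at a node you need Fact \ref{many-t}, whose hypothesis is that there is \emph{no minimal type above} the current type. Instability in $\lambda$ does not give you that, and in the intended application of Fact \ref{5.8} (immediately after Fact \ref{5.3}) there \emph{is} a minimal type, so branches of your tree can simply stop splitting. Even granting splitting at every successor, getting a coherent binary tree of height $\lambda^+$ with $2^{\lambda^+}$ branches carrying pairwise distinct types does not follow from $|\gS(M)| \ge \lambda^+$; this is exactly why Theorem \ref{main-2} only builds a tree of height $\kappa \le \lambda$ supplied by Lemma \ref{tree}, settles for $\lambda^{++}$ branches, and contradicts \emph{almost} stability rather than stability.

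The second gap is the endgame. Realizing ``$2^{\lambda^+}$ pairwise distinct Galois types'' in a model of cardinality $\lambda^{++}$ is contradictory only if $2^{\lambda^+} > \lambda^{++}$, which is not a theorem of ZFC. Moreover, the branch types live in $\gS(M)$ and are already realized in the branch models of size $\lambda^+$, so producing them says nothing about whether a model of size $\lambda^{++}$ exists: the model $N^*$ never enters your argument in an essential way, so you cannot conclude $\K_{\lambda^{++}} = \emptyset$ from it. Finally, inevitability is used only metaphorically (``rigidity constraint'', ``canonical marker''). By definition it guarantees only that every proper $\K_\lambda$-extension of $M$ realizes \emph{some} type from the inevitable set \emph{somewhere}; it provides no uniqueness of realizations, no preferred elements, and no mechanism preventing distinct branch types from being identified under amalgamation. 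A correct proof must use the inevitable type and the model in $\lambda^{++}$ together in a substantive way (e.g., to force realizations cofinally along resolutions and bound $|\gS(M)|$ from above), not merely as decoration on a non-structure construction.
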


We obtain the main result of this section which is the forward direction of result mentioned in the abstract.

\begin{thm}\label{main-2} Suppose $\lambda^+ < 2^{\lambda} < 2^{\lambda^+}$.
 Assume $\K$ is categorical in $\lambda$ and $\lambda^+$, $\K_{\lambda^{++}} \neq \emptyset$ and $|\gS^{\neg\lambda^+-min}(M)|\leq \lambda^+$ for the unique model $M\in \K_\lambda$. Then $\K$ is stable in $\lambda$.
\end{thm}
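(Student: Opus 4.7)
The plan is to follow the two-step approach indicated in the introduction. The first step is to upgrade our hypotheses to almost stability; the second step is to produce a minimal type and invoke Shelah's machinery. To set up the first step, observe that the remaining hypotheses of Theorem \ref{almost-stability} follow from our assumptions: Fact \ref{few_models->ap}, applied with $2^{\lambda} < 2^{\lambda^+}$ and $\Ii(\K,\lambda) = 1 = \Ii(\K,\lambda^+)$, yields amalgamation in $\lambda$; and $\K_{\lambda^{++}} \neq \emptyset$ together with categoricity in $\lambda$ and the L\"owenheim--Skolem--Tarski axiom shows that the unique $M \in \K_\lambda$ has a proper $\lea$-extension in $\K_\lambda$ (extract a $\lea$-submodel of size $\lambda$ from a model of size $\lambda^{++}$ and use LST to produce a strictly larger submodel of size $\lambda$; up to isomorphism this yields a proper extension of $M$). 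Thus Theorem \ref{almost-stability} applies and we conclude $|\gS(M)| \leq \lambda^+$.

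For the second step, I would show by contradiction that there is a minimal type in $\gS^{na}(M)$. Suppose not; then the propagation clause of Fact \ref{many-t} ensures that no $p \in \gS^{na}(M')$ (for any $M' \in \K_\lambda$ extending $M$) is minimal either. Using Lemma \ref{tree} (whose hypothesis $\lambda^+ < 2^\lambda$ is among our assumptions), fix a tree $T \subseteq {}^{<\kappa}2$ with $|T| \leq \lambda$, $\kappa \leq \lambda$ levels, and at least $\lambda^{++}$ branches of length $\kappa$. I would build along $T$ an increasing chain $\langle M_i : i \leq \kappa \rangle$ in $\K_\lambda$ and a coherent tree of non-algebraic types $\langle p_\eta : \eta \in T \rangle$ with $p_\eta \in \gS^{na}(M_{\ell(\eta)})$ such that the immediate $T$-successors of each $\eta$ carry pairwise distinct extensions of $p_\eta$. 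At successor stage $i+1$, for each of the $\leq \lambda$ nodes $\eta$ at level $i$, Fact \ref{many-t} supplies $N_\eta \in \K_\lambda$ extending $M_i$ with at least two distinct non-algebraic extensions of $p_\eta$; I then use amalgamation in $\lambda$ iteratively to fold the $N_\eta$ into a single $M_{i+1} \in \K_\lambda$ and lift each $p_{\eta\concat \ell}$ to $\gS^{na}(M_{i+1})$ (distinct liftings remain distinct since they already differ below). At limit stages Remark \ref{coh} supplies coherent upper bounds. Finally, each of the $\geq \lambda^{++}$ branches of $T$ determines a distinct type over $M_\kappa \in \K_\lambda$, contradicting the almost stability established in the first step.

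With a minimal type over $M$ in hand, Fact \ref{5.3} (applicable under $2^\lambda < 2^{\lambda^+}$, categoricity in $\lambda$ and $\lambda^+$, and $\K_{\lambda^{++}} \neq \emptyset$) upgrades it to an \emph{inevitable} minimal type over $M$. Fact \ref{5.8} then concludes that $\K$ is stable in $\lambda$, invoking categoricity in $\lambda$, amalgamation in $\lambda$, and the existence of a model in $\lambda^{++}$.

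The main obstacle I anticipate is the tree construction in the second step: one must organize things so that all $\lambda^{++}$ branch-types live as distinct elements of $\gS(M_\kappa)$ for a single model $M_\kappa$ of cardinality $\lambda$. Lemma \ref{tree} handles the combinatorics by keeping the total tree small ($\leq \lambda$ nodes), so that at each level amalgamation in $\lambda$ is applied at most $\lambda$ times and $M_i \in \K_\lambda$ is preserved. The propagation clause ``above these extensions there are not minimal types either'' in Fact \ref{many-t} guarantees that the recursion can always be continued. Once these bookkeeping issues are handled, the remaining pieces are direct invocations of Shelah's results.
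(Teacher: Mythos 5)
Your proposal takes essentially the same route as the paper's proof: use Theorem \ref{almost-stability} to get almost stability, then assume there is no minimal type and build a coherent tree of types along the tree from Lemma \ref{tree} via Fact \ref{many-t}, amalgamating at each level and using Remark \ref{coh} at limits, to contradict almost stability; finally invoke Fact \ref{5.3} and Fact \ref{5.8}. You additionally spell out the verification that amalgamation in $\lambda$ (via Fact \ref{few_models->ap}) and no maximal model in $\lambda$ (via $\K_{\lambda^{++}}\neq\emptyset$ and LST) hold so that Theorem \ref{almost-stability} applies, which the paper leaves implicit. One small remark: the fact that no type over any $M'\in\K_\lambda$ is minimal follows immediately from categoricity in $\lambda$ (all such $M'$ are isomorphic to $M$), independently of the propagation clause of Fact \ref{many-t}; the propagation clause is what keeps the \emph{extension property} available at successor stages, which your bookkeeping correctly relies on.
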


\begin{proof} We show that there is a minimal type in $\K_\lambda$.  This is enough as it implies the existence of an inevitable minimal type by Fact \ref{5.3}, which in turn implies that $\K$ is  stable in $\lambda$ by Fact \ref{5.8}. 
    
    Assume for the sake of contradiction  that there is not a minimal type in $\K_\lambda$. Build $\langle M_i : i<\kappa\rangle$ and $\langle p_\eta:\eta\in T\rangle$, where $T$ is the tree from Lemma \ref{tree} which exists as $\lambda^+ < 2^{\lambda}$, such that:
\begin{enumerate}
    \item $M_i\in \K_\lambda$ for all $i<\kappa$;
    \item $\langle M_i: i<\kappa\rangle$ is increasing and continuous;
    \item For $\eta\leq\nu$, $p_\eta\leq p_\nu$;
    \item For all $\eta$ of rank $i$ and $\nu_0 \neq \nu_1\in T$, both of rank $i+1$ and extending $\eta$, $p_{\nu_0}\neq p_{\nu_1}$. 
    \item For all $i<\kappa$, $\langle p_{\eta\restriction \alpha}: \alpha<i\rangle$ is coherent.
\end{enumerate}
\fbox{Construction}  This is done by induction on the rank of $\eta\in T$. At stage $0$ let $p_{\langle\rangle}$ be any type (hence not minimal and having no minimal types above it). At successor stage, assume we have built the tree of types up to rank $i$ and we build the next level. Without loss of generality assume each $\eta$ of rank $i$ has $\lambda$ extensions $\{\eta^\alpha : \alpha<\lambda\}$ at the next level. We find $N_\eta$ and $\{ p_\eta^\alpha : \alpha<\lambda\}\subseteq \gS^{na}(N_\eta)$ distinct extensions of $p_\eta$ for each $\eta$, these exist by Fact \ref{many-t}. Amalgamate $N_\eta$ over $M_i$ for all $\eta$ to obtain $M_{i+1}$ and $f_\eta^i:N_\eta \to M_{i+1}$ such that $f_\eta \rest M_i = id_{M_i}$. For each $\eta$ and $\alpha < \lambda^+$, extend each $f_\eta^\alpha(p^\alpha_\eta)$ to $p_{\eta^\alpha}\in \gS^{na}(M_{i+1})$. Note that this is possible as any type above which there is no minimal type has the extension property. This finishes the successor case. At limit stage take directed colimits using Remark \ref{coh}. 


. \\
\fbox{Enough} Take $M:=\bigcup_{i<\kappa} M_i$, and $p_\eta$ be an upper bound of $\langle p_{\eta \restriction i} : i <\kappa \rangle$ for each branch (of length $\kappa$) of $T$. It is clear that $p_\eta \neq p_\nu$ if $\eta \neq \nu \in T$ by condition (4) of the construction. Therefore, $| \gS(M) |\geq \lambda^{++}$.  This is a contradiction as  $\K$ is almost stable in $\lambda$  by Theorem \ref{almost-stability}.
\end{proof}

A more natural result to state is the following. 
\begin{cor}\label{c-stable}
Suppose $\lambda^+ < 2^{\lambda} < 2^{\lambda^+}$.
 Assume $\K$ is categorical in $\lambda$ and $\lambda^+$, and is almost stable in $\lambda$. If $\K_{\lambda^{++}} \neq \emptyset$, then $\K$ is stable in $\lambda$.
\end{cor}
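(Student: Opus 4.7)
The plan is to derive Corollary \ref{c-stable} directly from Theorem \ref{main-2} by observing that almost stability in $\lambda$ subsumes the only hypothesis of that theorem not already present among the corollary's assumptions, namely the bound $|\gS^{\neg\lambda^+\text{-min}}(M)| \le \lambda^+$ for the unique model $M \in \K_\lambda$. All remaining hypotheses of Theorem \ref{main-2} (the cardinal arithmetic $\lambda^+ < 2^\lambda < 2^{\lambda^+}$, categoricity in $\lambda$ and in $\lambda^+$, and $\K_{\lambda^{++}} \ne \emptyset$) appear verbatim in the corollary's statement.

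The key observation is a trivial containment: by Definition 2.4, $\mu$-minimality is defined only for non-algebraic types, so $\gS^{\neg\lambda^+\text{-min}}(M) \subseteq \gS^{na}(M) \subseteq \gS(M)$ for every $M \in \K_\lambda$. Almost stability of $\K$ in $\lambda$ asserts precisely that $|\gS(M)| \le \lambda^+$ for every $M \in \K_\lambda$, and in particular for the unique one singled out by categoricity in $\lambda$. Combining these yields $|\gS^{\neg\lambda^+\text{-min}}(M)| \le \lambda^+$ for that $M$, exactly what Theorem \ref{main-2} requires.

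With that verification complete, one simply invokes Theorem \ref{main-2} to conclude that $\K$ is stable in $\lambda$, finishing the proof. There is no genuine obstacle to overcome; the corollary is a reformulation of the theorem with a hypothesis that is cleaner to state and easier to recognize in practice, since ``almost stable in $\lambda$'' is a standard notion, whereas the explicit bound on $|\gS^{\neg\lambda^+\text{-min}}(M)|$ is a somewhat technical intermediate quantity that appears naturally in the proof of Theorem \ref{main-2} via Theorem \ref{almost-stability} but is less familiar on its own.
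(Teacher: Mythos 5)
Your proposal is correct and takes exactly the intended route: the paper states this corollary without proof precisely because it is the immediate specialization of Theorem~\ref{main-2} once one notes that $\gS^{\neg\lambda^+\text{-min}}(M) \subseteq \gS(M)$, so almost stability trivially supplies the bound $|\gS^{\neg\lambda^+\text{-min}}(M)| \le \lambda^+$.
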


We are ready to prove the main equivalence of the paper.

\begin{thm}\label{*} Suppose $\lambda^+ < 2^{\lambda} < 2^{\lambda^+}$.
 Assume $\K$ is categorical in $\lambda$ and $\lambda^+$, $\K$ is almost stable for non-$\lambda^+$-minimal types in $\lambda$ and splitting is continuous in $\lambda$. The following are equivalent.
 
 \begin{enumerate}
 \item  $\K$ has a model in $\lambda^{++}$.
 \item $\K$ is stable in $\lambda$.
 \end{enumerate}
 \end{thm}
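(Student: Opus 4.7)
The plan is to observe that both directions follow immediately from the two main theorems already developed in the paper, once we verify that the stated hypotheses imply the technical assumptions of Theorem \ref{++} and Theorem \ref{main-2}. No new argument is really needed; the content is the compatibility of the two sets of hypotheses.

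For the direction $(2)\Rightarrow (1)$, I would invoke Theorem \ref{++}, whose hypotheses are amalgamation in $\lambda$, no maximal model in $\lambda$, stability in $\lambda$, and continuity of splitting in $\lambda$. Stability and continuity of splitting are given. Amalgamation in $\lambda$ follows from Fact \ref{few_models->ap} applied with the cardinal arithmetic assumption $2^\lambda < 2^{\lambda^+}$ together with categoricity in $\lambda$ and $\lambda^+$ (note $\Ii(\K,\lambda^+)=1<2^{\lambda^+}$). No maximal model in $\lambda$ follows from categoricity in $\lambda^+$: since $\K_{\lambda^+}\neq \emptyset$, fix $N\in \K_{\lambda^+}$; any $M\in \K_\lambda$ is, by categoricity in $\lambda$, isomorphic to some $\K$-substructure of $N$ obtained from a resolution of $N$, hence has a proper $\K$-extension.

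For the direction $(1)\Rightarrow (2)$, I would apply Theorem \ref{main-2} directly. Its hypotheses are $\lambda^+<2^\lambda<2^{\lambda^+}$ (assumed), categoricity in $\lambda$ and $\lambda^+$ (assumed), $\K_{\lambda^{++}}\neq \emptyset$ (which is exactly condition (1)), and $|\gS^{\neg\lambda^+-min}(M)|\leq \lambda^+$ for the unique $M\in \K_\lambda$, which is precisely the assumption that $\K$ is almost stable for non-$\lambda^+$-minimal types in $\lambda$. All hypotheses are met, so Theorem \ref{main-2} yields stability in $\lambda$.

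The hardest thing to get right is just keeping track that each bookkeeping hypothesis of the two prior theorems is actually available here; in particular, one must note that continuity of splitting (used for $(2)\Rightarrow(1)$) is not needed for $(1)\Rightarrow(2)$, and conversely the cardinal arithmetic $\lambda^+<2^\lambda<2^{\lambda^+}$ and the bound on non-$\lambda^+$-minimal types are used only in the $(1)\Rightarrow(2)$ direction. Since nothing beyond Fact \ref{few_models->ap}, Theorem \ref{++}, and Theorem \ref{main-2} is required, the proof should be only a few lines.
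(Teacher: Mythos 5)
Your proof is correct and matches the paper's own argument almost exactly: the forward direction is Theorem \ref{main-2} applied directly (after noting that almost stability for non-$\lambda^+$-minimal types is literally the bound $|\gS^{\neg\lambda^+\text{-min}}(M)|\le\lambda^+$), and the reverse direction is Theorem \ref{++}, with amalgamation in $\lambda$ supplied by Fact \ref{few_models->ap} and no maximal models in $\lambda$ supplied by categoricity in $\lambda$ plus $\K_{\lambda^+}\neq\emptyset$. There is nothing to add.
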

  \begin{proof}
  $(1) \Rightarrow (2)$: Theorem \ref{main-2}.
  
  $(2) \Rightarrow (1)$: $\K$ has amalgamation in $\lambda$ by Fact \ref{few_models->ap}, no maximal models in $\lambda$ by categoricity in $\lambda$ and $\K_{\lambda^+} \neq \emptyset$ and is stable in $\lambda$ by assumption. Moreover, splitting is continuous in $\lambda$ by assumption. Therefore $\K$ has a model in $\lambda^{++}$ by Theorem \ref{++}.  \end{proof}

  \begin{remark}
  Theorem \ref{main-2} can be used to replace the stability assumption in $\lambda$ of Theorem \ref{trans} by the weaker assumptions of: $\lambda^+ < 2^{\lambda} < 2^{\lambda^+}$,  $\K_{\lambda^{++}} \neq \emptyset$ and $|\gS^{\neg\lambda^+-min}(M)|\leq \lambda^+$ for the unique model $M\in \K_\lambda$.
  \end{remark}

\subsection{Further results} We obtain stability in $\lambda$ without the assumption of arbitrarily large models under different assumptions of those of Theorem \ref{main-2}. 

  In \cite[\S VI.4.2]{shelahaecbook}, it is shown that assuming $2^\lambda<2^{\lambda^+}$, then one of three statements about $\lambda$ hold, which we denote by (A), (B) and (C). For our purpose, there is no need to present them explicitly.

\begin{fact}[{\cite[\S VI.4.5(4)]{shelahaecbook}}]\label{minimal-dense-1} Assume $2^\lambda<2^{\lambda^+}$ and statement (A) holds for $\lambda$. If $\K$ has amalgamationin $\lambda$, $1\leq \Ii(\K,\lambda^+)<2^{\lambda^+}$, then for every non-algebraic type over any $M\in \K_\lambda$ there is a minimal type above it.
\end{fact}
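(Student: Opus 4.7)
The plan is to argue by contradiction: suppose there exists $M \in \K_\lambda$ and a non-algebraic type $p\in \gS^{na}(M)$ such that no extension of $p$ admits a minimal type above it. The high-level strategy is to construct $2^{\lambda^+}$ pairwise non-isomorphic models in $\K_{\lambda^+}$, contradicting $\Ii(\K,\lambda^+) < 2^{\lambda^+}$. This is a classical weak-diamond style non-structure argument in the spirit of the proof of Theorem \ref{almost-stability}.

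First I would exploit Fact \ref{many-t} recursively. Since no extension of $p$ is minimal, each non-algebraic extension has $\lambda^+$ further non-algebraic extensions (with the extension property), and none of those are minimal either. This lets me build, by induction on the length of $\eta \in {}^{<\lambda^+}2$, an increasing continuous chain $\langle M_i : i < \lambda^+\rangle$ of models in $\K_\lambda$ with $M_0 = M$, together with a tree $\langle p_\eta : \eta \in {}^{<\lambda^+}2\rangle$ of non-algebraic types with $p_\eta \in \gS^{na}(M_{\ell(\eta)})$, coherent along branches, such that $p_{\eta \concat 0} \neq p_{\eta\concat 1}$ for every $\eta$. At successor stages one uses amalgamation in $\lambda$ to glue the two extensions and then the extension property to transfer them to $M_{i+1}$; at limits one uses coherence (Remark \ref{coh}).

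Next, for each $\eta \in {}^{\lambda^+}2$ I would extract a limit type $p_\eta \in \gS^{na}(\bigcup_i M_i)$, and then, by amalgamating along $\eta$, build a model $N_\eta \in \K_{\lambda^+}$ which realizes $p_\eta$ while omitting, in a suitably coded way, the types $p_\nu$ for $\nu$ differing from $\eta$. The non-isomorphism of the $N_\eta$'s is obtained by a weak diamond argument: $2^\lambda < 2^{\lambda^+}$ provides $\Phi^2_{\lambda^+}$ (Fact \ref{Ulam}), and one defines a coloring $F$ on $\lambda$-sized approximations to a potential isomorphism that detects, on a stationary set, which branch direction was chosen at stage $\delta$. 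This is essentially the same scheme used in Theorem \ref{almost-stability}, where the crucial input was that no $\le\lambda^+$-sized family of candidate types is inevitable; here the role of inevitability failure is played by the absence of minimal types.

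The main obstacle is the role of statement (A) of \cite[\S VI.4.2]{shelahaecbook}, which I have suppressed above. The construction in the previous paragraph, carried out naively, uses $2^{\lambda^+}$ branches but needs the resulting models to be genuinely distinguishable by a first-order weak-diamond prediction, and this is where the Shelah trichotomy intervenes. In cases (B) and (C) the obstruction is controlled by different combinatorial invariants (and yields different non-structure theorems), while (A) is precisely the hypothesis under which the tree-of-types coding transfers cleanly into $2^{\lambda^+}$ non-isomorphic models in $\K_{\lambda^+}$. So in practice the hard step is verifying that hypothesis (A), together with amalgamation in $\lambda$, permits the amalgamation moves at successor stages of the construction of the $N_\eta$ to proceed without collapsing distinct branches onto isomorphic models; once that is in place, the contradiction with $\Ii(\K,\lambda^+) < 2^{\lambda^+}$ is immediate and we conclude that minimal types are dense above every non-algebraic type.
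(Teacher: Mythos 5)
The paper does not actually prove this fact: it is quoted verbatim from Shelah's \cite[\S VI.4.5(4)]{shelahaecbook}, and the surrounding text explicitly declines to even state the hypothesis (A), saying ``for our purpose, there is no need to present them explicitly.'' So there is no internal proof of the paper to compare against.

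Your proposal has a genuine gap, and to your credit you identify it yourself in the final paragraph. The entire content of Shelah's trichotomy (A)/(B)/(C) is that the naive ``tree of types plus weak diamond'' argument does \emph{not} transfer into $2^{\lambda^+}$ non-isomorphic models without additional combinatorial input; statement (A) is precisely the case of the trichotomy under which the coding works. Writing ``once that is in place, the contradiction is immediate'' is not a proof but a restatement of the fact being proved: the heart of the matter is verifying that (A) permits the many-models coding, and you do not, and cannot, engage with this since you have suppressed what (A) says. There is also a second unjustified step: in the middle paragraph you assert that for each branch $\eta \in {}^{\lambda^+}2$ one can build $N_\eta \in \K_{\lambda^+}$ ``realizing $p_\eta$ while omitting, in a suitably coded way, the types $p_\nu$ for $\nu$ differing from $\eta$.'' Omitting those types is exactly the non-trivial construction; absent a hypothesis like failure of $\gS_*$-inevitability (which is what drives the omitting argument in the proof of Theorem~\ref{almost-stability}) or the hypothesis (A) itself, there is no reason such $N_\eta$ exist. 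Note also that the tree-of-types argument that does appear in the paper, in Theorem~\ref{main-2}, is aimed at a different conclusion (too many Galois types over a single model in $\lambda$, contradicting almost-stability), not at producing $2^{\lambda^+}$ non-isomorphic models in $\lambda^+$; conflating the two constructions is part of why your sketch does not close.
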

\begin{fact}[{\cite[\S VI.4.9(2)]{shelahaecbook}}]\label{minimal-dense-2}
     Assume $2^\lambda<2^{\lambda^+}<2^{\lambda^{++}}$ and statement (B) or (C) hold for $\lambda$. If $\K$ has amalgamation in $\lambda$, is categorical in $\lambda^+$, $\K_{\lambda^{++}}\neq \emptyset$, and $|\gS(N)|<2^{\lambda^+}$ for the unique model in $\K_{\lambda^+}$, then for every non-algebraic type over any $M\in \K_\lambda$ there is a minimal type above it.
\end{fact}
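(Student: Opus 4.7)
The plan is to argue by contradiction, roughly in the same spirit as the proof of Theorem \ref{main-2} but pushed one cardinal higher. Suppose there is a non-algebraic $p \in \gS^{na}(M)$ with $M \in \K_\lambda$ such that no minimal type lies above any extension of $p$. The goal is to manufacture $2^{\lambda^+}$ pairwise distinct types over the unique $N \in \K_{\lambda^+}$, contradicting the hypothesis $|\gS(N)| < 2^{\lambda^+}$, and thereby forcing density of minimal types.

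First I would fix an increasing continuous resolution $\langle M_i : i < \lambda^+ \rangle$ of $N$ in $\K_\lambda$ and build a binary tree of types $\langle p_\eta : \eta \in {}^{<\lambda^+}2\rangle$ with $p_\eta \in \gS^{na}(M_{\ell(\eta)})$, $p_\nu \leq p_\eta$ whenever $\nu \leq \eta$, $p_{\eta\concat 0}\neq p_{\eta\concat 1}$, and each branch coherent in the sense of Remark \ref{coh}. At successor stages, the hypothesis that no minimal type sits above $p_\eta$ lets us apply a variant of Fact \ref{many-t} to find two distinct non-algebraic extensions of $p_\eta$ in some $N_\eta \in \K_\lambda$; amalgamation in $\lambda$ places all the $N_\eta$ inside $M_{i+1}$, and the extension property (which is automatic above types admitting no minimal extensions) lifts the split types to $\gS^{na}(M_{i+1})$. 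Limit stages use coherent upper bounds. Each branch $\eta \in {}^{\lambda^+}2$ then determines a type $p_\eta^* \in \gS^{na}(N)$, and the distinctness $p_{\eta \concat 0}\neq p_{\eta \concat 1}$ at the first splitting level forces the $p_\eta^*$ to be pairwise distinct, yielding $|\gS(N)| \geq 2^{\lambda^+}$.

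The main obstacle is that the above naive tree construction uses only the assumptions of Theorem \ref{main-2}, so it cannot be the whole story; otherwise the extra hypotheses of the fact would be redundant. The subtlety hides in cases (B) and (C) of Shelah's trichotomy for $\lambda$: unlike case (A), these do not immediately produce $\lambda^+$ incompatible extensions at each step inside a single model, and one must instead code the splitting into a larger object and extract types via a weak-diamond prediction. Concretely, I expect the proof to combine a weak diamond $\Phi^2_{\lambda^+}$ (available from $2^\lambda < 2^{\lambda^+}$) to defeat collapses at the level $\lambda^+$ using categoricity in $\lambda^+$, with a second weak diamond $\Phi^2_{\lambda^{++}}$ (available from $2^{\lambda^+} < 2^{\lambda^{++}}$) to ensure that the $\lambda^+$-many local witnesses assemble into a global tree of types over a single $N$; the exact combinatorial role of (B) versus (C) is to identify the function $F$ to be predicted so that distinct branches remain distinct once pulled up to $\gS(N)$.

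The hard part, then, will not be the tree construction itself, which is parallel to Theorem \ref{main-2}, but rather the case analysis forced by the trichotomy: (A) is handled separately by Fact \ref{minimal-dense-1}, and the present fact must absorb the two remaining cases uniformly, which is exactly why the additional cardinal arithmetic at $\lambda^{++}$, categoricity in $\lambda^+$, and the bound on $|\gS(N)|$ all appear simultaneously in the hypotheses.
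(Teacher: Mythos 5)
This statement is cited from Shelah's book (\cite[\S VI.4.9(2)]{shelahaecbook}) and the paper supplies no proof of its own, so there is nothing internal to compare your sketch against; your attempt has to be judged on its own terms, and on those terms it has a real gap that you only partially identify.

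The concrete problem is in your ``naive'' tree construction, and it is worse than a mismatch of hypotheses. You propose a binary tree $\langle p_\eta : \eta \in {}^{<\lambda^+}2\rangle$ of types over a fixed resolution $\langle M_i : i<\lambda^+\rangle$ of $N$, with the splitting at each successor step supplied by amalgamating the witnessing models $N_\eta$ into $M_{i+1}$. But at level $i$ there are $2^{|i|}$ nodes, and as soon as $2^{|i|}>\lambda$ (which happens already at $i=\lambda$, and possibly earlier) you cannot amalgamate all the $N_\eta$ into a single model of size $\lambda$. The paper's Theorem \ref{main-2} avoids exactly this by invoking Lemma \ref{tree}, which produces a tree with only $\lambda$ nodes but $\lambda^{++}$ branches; that lemma needs $\lambda^+<2^\lambda$, a hypothesis the present Fact deliberately does not assume. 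So the tree must branch over a tree of models, not a chain, which means the branches end up as types over $2^{\lambda^+}$ a priori different $\lambda^+$-sized models. Categoricity in $\lambda^+$ identifies these models, but the identifying isomorphisms can collapse distinct branch types, and it is precisely at this point that the weak diamond and the (B)/(C) case distinction are needed to guarantee many branches survive. You gesture at this (``code the splitting into a larger object and extract types via a weak-diamond prediction''), which is the right instinct, but you leave the actual prediction function, the use of $2^{\lambda^+}<2^{\lambda^{++}}$, and the role of (B) versus (C) unspecified. As written the proposal is an outline of a plan you acknowledge you cannot complete, not a proof.
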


\begin{lem}\label{main-2.1-shelah} Suppose that $2^{\lambda} < 2^{\lambda^+}<2^{\lambda^{++}}$.
 Assume $\K$ has amalgamation in $\lambda$ and is categorical in $\lambda^+$, and $|\gS(N)|<2^{\lambda^+}$ for the unique model $N\in \K_{\lambda^+}$. Then for every non-algebraic type over any $M\in \K_\lambda$ there is a minimal type above it.
\end{lem}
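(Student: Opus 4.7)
The approach is to invoke Shelah's trichotomy from \cite[\S VI.4.2]{shelahaecbook}: since $2^\lambda < 2^{\lambda^+}$, one of statements (A), (B), (C) holds at $\lambda$, and in each case I would apply the corresponding density fact that has just been recalled from \cite{shelahaecbook}.

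If (A) holds, I would apply Fact \ref{minimal-dense-1}. Amalgamation in $\lambda$ is assumed, and categoricity in $\lambda^+$ together with the existence of the unique model $N \in \K_{\lambda^+}$ yields $1 \leq \Ii(\K,\lambda^+) = 1 < 2^{\lambda^+}$, so all hypotheses are met.

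If (B) or (C) holds, I would aim to apply Fact \ref{minimal-dense-2}. Amalgamation in $\lambda$, categoricity in $\lambda^+$, the cardinal inequalities $2^\lambda < 2^{\lambda^+} < 2^{\lambda^{++}}$, and $|\gS(N)| < 2^{\lambda^+}$ are all immediate from the hypotheses of the lemma; the only missing piece is $\K_{\lambda^{++}} \neq \emptyset$.

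The main obstacle is therefore to secure $\K_{\lambda^{++}} \neq \emptyset$ in cases (B) and (C). If it already holds, the lemma follows directly. Otherwise $N$ is a maximal model, and I would argue for the conclusion by contradiction: suppose some $M \in \K_\lambda$ and some $p \in \gS^{na}(M)$ admit no minimal type above them. Iterating Fact \ref{many-t} at successor stages (each application produces two distinct non-algebraic extensions, both still admitting no minimal type above, as noted in the statement of that fact) and taking coherent limits via Remark \ref{coh}, I would build a coherent binary tree $\langle (M_\eta, p_\eta) : \eta \in {}^{<\lambda^+}2\rangle$ with $M_\eta \in \K_\lambda$ and $p_\eta \in \gS^{na}(M_\eta)$. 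Each branch $\nu \in {}^{\lambda^+}2$ yields a distinct non-algebraic type over $M_\nu := \bigcup_{\alpha < \lambda^+} M_{\nu \restriction \alpha} \in \K_{\lambda^+}$, and by categoricity in $\lambda^+$ every such $M_\nu$ is isomorphic to $N$; transporting the resulting $2^{\lambda^+}$ distinct types into $\gS(N)$ then contradicts $|\gS(N)| < 2^{\lambda^+}$. The delicate point is arranging that distinct branches give rise to genuinely distinct types over $N$ (rather than merely over their own branch unions), which requires careful amalgamation bookkeeping at successor stages of the construction.
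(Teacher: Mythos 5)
Your trichotomy case split is exactly what the paper does, and your handling of case~(A) is correct. The issue is entirely in how you try to secure $\K_{\lambda^{++}} \neq \emptyset$ for cases (B)/(C), and there you have picked a route that is both harder than necessary and genuinely incomplete.

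The paper's argument for $\K_{\lambda^{++}} \neq \emptyset$ uses a \emph{single coherent chain}, not a tree. Assume toward a contradiction that some $p \in \gS^{na}(M)$, $M \in \K_\lambda$, has no minimal type above it. By Fact~\ref{many-t}, $p$ has the extension property, and every non-algebraic extension of $p$ again has no minimal type above it, so the extension property persists. One therefore builds an increasing continuous chain $\langle M_i : i < \lambda^+\rangle$ in $\K_\lambda$ together with a coherent sequence $\langle p_i : i < \lambda^+\rangle$ of non-algebraic extensions of $p$, using the extension property at successors and Remark~\ref{coh} at limits. The upper bound is a non-algebraic type over $\bigcup_i M_i \in \K_{\lambda^+}$, so this union is non-maximal; by categoricity in $\lambda^+$, $N$ is non-maximal, and a routine iteration (through $\lambda^{++}$ many proper extensions, invoking categoricity at each step of size $\lambda^+$) produces a model of size $\lambda^{++}$. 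Now the trichotomy applies in both cases and yields density, contradicting our assumption.

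Your binary tree $\langle (M_\eta, p_\eta) : \eta \in {}^{<\lambda^+}2\rangle$, by contrast, does not close up. The ``delicate point'' you flag is a genuine gap, not a bookkeeping detail: distinct branches $\nu$ produce types $p_\nu$ over distinct models $M_\nu \in \K_{\lambda^+}$, and with amalgamation available only in $\lambda$ there is no ambient monster in which to compare them. Transporting via arbitrary isomorphisms $h_\nu : M_\nu \cong N$ does not preserve the distinctions coded at splitting nodes, so $|\gS(N)| \ge 2^{\lambda^+}$ does not follow. Trying to fix this by amalgamating all level-$i$ models into one also fails: level $i$ of ${}^{<\lambda^+}2$ has $2^{|i|}$ nodes, which exceeds $\lambda$ well before $i$ reaches $\lambda^+$, so the amalgam cannot stay in $\K_\lambda$. (Making such a tree argument work does require extra machinery such as the weak diamond, which the paper deliberately avoids here.) Replace the tree with the single-chain construction above and your proof becomes both correct and much shorter.
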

\begin{proof}
    If there are no minimal types, one can keep extending a non-algebraic type (not minimal) to a model in $\lambda^+$. Since that extension is non-algebraic, with categoricity in $\lambda^+$ one can show that $\K_{\lambda^{++}}\neq \emptyset$. If statement (A) holds for $\lambda$, we use Fact \ref{minimal-dense-1}. If (B) or (C) hold, and we use Fact \ref{minimal-dense-2}.
\end{proof}

\begin{thm}\label{stability-+}
    Suppose that $2^{\lambda} < 2^{\lambda^+}<2^{\lambda^{++}}$.
 Assume $\K$ is categorical in $\lambda$ and $\lambda^+$, $\K_{\lambda^{++}}\neq \emptyset,$ and $|\gS(N)|<2^{\lambda^+}$ for the unique model $N\in \K_{\lambda^+}$. Then $\K$ is stable in $\lambda$.
\end{thm}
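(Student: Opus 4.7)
The plan is to combine the preceding Facts and Lemmas in a short chain: amalgamation in $\lambda$ $\Rightarrow$ existence of a minimal type $\Rightarrow$ existence of an inevitable minimal type $\Rightarrow$ stability in $\lambda$.

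First, I would verify that $\K$ has amalgamation in $\lambda$. Since $2^{\lambda}<2^{\lambda^+}$, $\Ii(\K,\lambda)=1$ by categoricity in $\lambda$, and $1\leq \Ii(\K,\lambda^+)=1<2^{\lambda^+}$ by categoricity in $\lambda^+$, Fact \ref{few_models->ap} applies and yields amalgamation in $\lambda$. This is the key preliminary needed to use the machinery of Facts \ref{5.3} and \ref{5.8}.

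Next, I would invoke Lemma \ref{main-2.1-shelah} to conclude that for every non-algebraic type over any $M\in \K_\lambda$, there is a minimal type above it. The hypotheses of the lemma are met: $2^\lambda<2^{\lambda^+}<2^{\lambda^{++}}$, amalgamation in $\lambda$ (just established), categoricity in $\lambda^+$, and $|\gS(N)|<2^{\lambda^+}$ for the unique $N\in \K_{\lambda^+}$. In particular, picking any non-algebraic type over the unique $M\in \K_\lambda$ (which exists because $\K_{\lambda^+}\neq \emptyset$), there is a minimal type over $M$.

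Now I would apply Fact \ref{5.3}: under $2^\lambda<2^{\lambda^+}$, categoricity in $\lambda$ and $\lambda^+$, and $\K_{\lambda^{++}}\neq \emptyset$, the existence of a minimal type over the unique $M\in \K_\lambda$ upgrades to the existence of an \emph{inevitable} minimal type over $M$. Finally, Fact \ref{5.8} applies since $\K$ is categorical in $\lambda$, has amalgamation in $\lambda$, and has a model in $\lambda^{++}$; this yields that $\K$ is stable in $\lambda$, completing the proof.

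There is no real obstacle here: the argument is a clean assembly of results already proved in this section (and in Shelah's book). The only genuine content is verifying the hypotheses of Fact \ref{few_models->ap} at the start so that amalgamation in $\lambda$ is available for both Lemma \ref{main-2.1-shelah} and Fact \ref{5.8}.
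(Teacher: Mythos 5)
Your proposal is correct and follows essentially the same route as the paper's own proof: amalgamation in $\lambda$ via Fact \ref{few_models->ap}, a minimal type via Lemma \ref{main-2.1-shelah}, an inevitable minimal type via Fact \ref{5.3}, and finally stability via Fact \ref{5.8}. The only addition is your explicit check that a non-algebraic type over $M$ exists, which is harmless and matches the paper's implicit use of $\K_{\lambda^+}\neq\emptyset$.
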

\begin{proof}
    By Fact \ref{few_models->ap} $\K$ has amalgamation in $\lambda$. By Lemma \ref{main-2.1-shelah} there is a minimal type. This is enough as it implies the existence of an inevitable minimal type by Fact \ref{5.3}, which in turn implies that $\K$ is  stable in $\lambda$ by Fact \ref{5.8}. 
\end{proof}

\begin{remark}\label{prev-remark}
  Theorem \ref{stability-+} can be used to replace the stability assumption in $\lambda$ of Theorem \ref{trans} by the weaker assumptions of: $2^{\lambda} < 2^{\lambda^+}<2^{\lambda^{++}}$,  $\K_{\lambda^{++}} \neq \emptyset$ and  $|\gS(N)|<2^{\lambda^+}$ for the unique model $N\in \K_{\lambda^+}$.
  \end{remark}

A simple result to state following the idea of Remark \ref{prev-remark} is the following. 

\begin{cor} Assume $2^\lambda < 2^{\lambda^+} < 2^{\lambda^{++}}$. Let $\K$ be a universal class and let $\lambda \ge \LS(\K)$. If $\K$ is categorical in $\lambda$,  $\lambda^+$, $\K_{\lambda^{++}} \neq \emptyset$ and $|\gS(N)| < 2^{\lambda^+}$ for the unique $N \in \K_{\lambda^+}$, then $\K$ is categorical in all $\mu \ge \lambda$.
\end{cor}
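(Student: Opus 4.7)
The plan is to reduce this corollary to the combination of Theorem \ref{stability-+} (which supplies stability in $\lambda$ from the cardinal-arithmetic and few-types hypotheses) with Theorem \ref{trans} (which transfers categoricity all the way up from stability and the mild locality machinery). In spirit it mirrors Corollary \ref{uni}, but with the stability hypothesis replaced by a derivation via Shelah-type combinatorics. The soft side-conditions that distinguish universal classes — weak amalgamation and $(<\aleph_0)$-tameness \cite[3.7]{universal-class-part-i} — will do the heavy lifting on the locality side, since $(<\aleph_0)$-tameness trivially gives both $\lambda$-tameness and $(<\lambda^+,\lambda)$-locality (see \cite[2.5]{may}).

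Concretely, I would proceed as follows. \emph{Step 1: amalgamation in $\lambda$.} The assumption $2^{\lambda}<2^{\lambda^+}$ together with $\mathbb{I}(\K,\lambda)=1$ and $1\le \mathbb{I}(\K,\lambda^+)=1<2^{\lambda^+}$ lets me apply Fact \ref{few_models->ap} to obtain amalgamation in $\lambda$. \emph{Step 2: stability in $\lambda$.} With amalgamation in $\lambda$ now in hand and the remaining hypotheses ($2^\lambda<2^{\lambda^+}<2^{\lambda^{++}}$, categoricity in $\lambda$ and $\lambda^+$, $\K_{\lambda^{++}}\neq\emptyset$, and $|\gS(N)|<2^{\lambda^+}$ for the unique $N\in\K_{\lambda^+}$), Theorem \ref{stability-+} directly yields that $\K$ is stable in $\lambda$. \emph{Step 3: no maximal model in $\lambda$.} Since $\K_{\lambda^+}\neq\emptyset$, any member of $\K_{\lambda^+}$ contains, by the Löwenheim–Skolem–Tarski axiom, a $\K$-substructure of cardinality $\lambda$, which by categoricity in $\lambda$ is isomorphic to the unique $M\in\K_\lambda$; hence $M$ has a proper $\K$-extension.

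\emph{Step 4: continuity of splitting.} Universal classes are $(<\aleph_0)$-tame, so in particular $(<\lambda^+,\lambda)$-local. With amalgamation in $\lambda$, no maximal model in $\lambda$, and stability in $\lambda$ all available, Lemma \ref{loc->con} hands us continuity of splitting in $\lambda$. \emph{Step 5: categoricity transfer.} All hypotheses of Theorem \ref{trans} are then satisfied — weak amalgamation and $\lambda$-tameness from the universal class structure, amalgamation in $\lambda$ from Step 1, stability in $\lambda$ from Step 2, continuity of splitting from Step 4, and categoricity in $\lambda$ and $\lambda^+$ by assumption — so Theorem \ref{trans} yields categoricity of $\K$ in all $\mu\ge\lambda$, which is the conclusion.

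There is really no single hard step: the proof is a careful bookkeeping exercise that threads together Fact \ref{few_models->ap}, Theorem \ref{stability-+}, Lemma \ref{loc->con}, and Theorem \ref{trans} in order. The only subtlety worth double-checking is that Step 2 is logically independent of Step 4 (Theorem \ref{stability-+} does not assume continuity of splitting, only Steps 1 and the few-types hypothesis), so there is no circularity in deducing stability before continuity of splitting.
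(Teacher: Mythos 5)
Your proof is correct and follows essentially the same route as the paper: the paper's proof is simply ``Theorem \ref{stability-+} plus Corollary \ref{uni},'' and your Steps 1, 3, 4, 5 are precisely the unfolding of Corollary \ref{uni}'s proof (amalgamation via Fact \ref{few_models->ap}, no maximal models from $\K_{\lambda^+}\neq\emptyset$, $(<\aleph_0)$-tameness of universal classes giving $(<\lambda^+,\lambda)$-locality and $\lambda$-tameness, Lemma \ref{loc->con} for continuity of splitting, then Theorem \ref{trans}), while your Step 2 is the invocation of Theorem \ref{stability-+}. The ``subtlety'' you flag at the end is also correctly resolved — Theorem \ref{stability-+} does not need continuity of splitting, so the order of deduction is fine.
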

\begin{proof}
The result follows from Theorem \ref{stability-+} and Corollary \ref{uni}. 
\end{proof}

We finish this section by observing that Fact \ref{inev} can be used to significantly simplify the proof of the following result due to Shelah. 
\begin{fact}[{\cite[VI.1.18, VI.1.20]{shelahaecbook}}]\label{additional_result}
    Assume $\K$ has amalgamation and no maximal models in $\lambda$. If $N\in \K_\lambda$, $\Gamma\subseteq \gS^{na}(N)$ and $|\Gamma|>\lambda^+$. Then we can find $N^*$ and $\langle N_i:i<\lambda^{++}\rangle$ such that:
    \begin{enumerate}
        \item $N\lea N^*<_\K N_i\in \K_\lambda$;
        \item For all $i\neq j<\lambda^{++}$ and $c_i\in |N_i|-|N^*|$, $c_j\in |N_j|-|N^*|$, $\type(c_i/N^*,N_i)\neq \type(c_j/N^*,N_j)$;
        \item there are $a_i\in |N_i|$ for $i<\lambda^{++}$ such that $\type(a_i/N,N_i)\in \Gamma$ is not realized in $N^*$, and these types are pairwise distinct; moreover $\type(a_i/N,N_i)$ is not realized in $N_j$ for $j<i$.
    \end{enumerate}
\end{fact}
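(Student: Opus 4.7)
The plan is to apply Fact~\ref{inev} contrapositively to a type kind derived from $\Gamma$, yielding a model $N^* \geq N$ with many $\Gamma$-extensions and over which no small set of types is inevitable. The sequence $\langle N_i, a_i : i < \lambda^{++}\rangle$ is then built inductively via repeated non-inevitability.

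First I would define a $\leq_{\K_\lambda}$-type-kind $\gS_\Gamma$ by setting, for each $M \in \K_\lambda$,
\[
\gS_\Gamma(M) := \{p \in \gS^{na}(M) : \exists\, f : N \to M \text{ a } \K\text{-embedding with the pullback of } p \rest f[N] \text{ in } \Gamma\}.
\]
One checks $\gS_\Gamma$ commutes with isomorphisms and is hereditary; taking $f = \operatorname{id}_N$ gives $\Gamma \subseteq \gS_\Gamma(N)$, so $|\gS_\Gamma(N)| \geq |\Gamma| > \lambda^+$. The contrapositive of Fact~\ref{inev} then forces its hypothesis to fail: running the construction in its proof starting from $M_0 = N$ (which would otherwise build a saturated model forcing $|\gS_\Gamma(N)| \leq \lambda^+$), the construction must halt at some stage with a model $N^* \in \K_\lambda$, $N \lea N^*$, such that no subset of $\gS_\Gamma(N^*)$ of size $\leq \lambda^+$ is $\gS_\Gamma$-inevitable. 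In particular $|\gS_\Gamma(N^*)| > \lambda^+$.

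I would then construct $\langle N_i, a_i : i < \lambda^{++}\rangle$ by induction on $i$. At stage $i$, with $\langle N_j, a_j : j < i\rangle$ already built, set $\text{Bad}_i := \{\gtp(c/N^*, N_j) : j < i,\, c \in |N_j|-|N^*|\}$ and $\text{Used}_i := \{\gtp(a_j/N, N_j) : j < i\}$; both have cardinality at most $\lambda^+$. Applying non-$\gS_\Gamma$-inevitability to a carefully chosen subset of $\gS_\Gamma(N^*)$ of size $\leq \lambda^+$ (encompassing $\text{Bad}_i \cap \gS_\Gamma(N^*)$ and the $\gS_\Gamma$-extensions of types in $\text{Used}_i$), one obtains $N_i \geq N^*$ in $\K_\lambda$ and $a_i \in |N_i|-|N^*|$ whose type over $N^*$ lies in $\gS_\Gamma(N^*)$ and restricts to a fresh element of $\Gamma$, while no $b \in |N_i|-|N^*|$ realizes a type in that forbidden subset. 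Conditions (1) and (3) of the conclusion then follow immediately.

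The hard part will be condition (2): ensuring that \emph{every} element of $|N_i|-|N^*|$ realizes a type over $N^*$ outside $\text{Bad}_i$, not merely those whose type lies in $\gS_\Gamma(N^*)$. I would handle this by also invoking the contrapositive of Fact~\ref{inev} for the type kind $\gS^{na}$ (for which $|\gS^{na}(N)| \geq |\Gamma| > \lambda^+$ holds as well) and combining the two non-inevitabilities so that $b$-types outside $\gS_\Gamma(N^*)$ are likewise excluded. Once this is secured, the full conclusion is verified routinely.
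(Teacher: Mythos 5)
Your overall strategy — extract $N^*$ by contraposing Fact~\ref{inev} applied to a $\Gamma$-derived type kind, then build $\langle N_i : i < \lambda^{++}\rangle$ by iteratively invoking non-inevitability of the accumulated set of types over $N^*$ — is the same as the paper's. But two of your choices complicate the argument and one of them introduces a genuine gap.

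First, the type kind. The paper simply takes $\gS_*(M)$ to be the set of non-algebraic extensions of $\Gamma$ to $\gS^{na}(M)$, for $M$ ranging over $\lea$-extensions of $N$ (and then notes in a footnote that the proof of Fact~\ref{inev} goes through with the domain restricted in this way). Your $\gS_\Gamma(M)$, defined on all of $\K_\lambda$ via arbitrary embeddings $f : N \to M$, creates a problem you then have to fight: a type $p \in \gS_\Gamma(N^*)$ only witnesses membership in $\Gamma$ via \emph{some} embedding $f$, which need not be the inclusion $N \lea N^*$, so you cannot conclude that $\gtp(a_i/N, N_i) \in \Gamma$ where the restriction is taken along $N \lea N^* \lea N_i$. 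Condition~(3) of the statement refers to the natural inclusion, so your construction as described does not deliver it. Restricting the type kind to extensions of $N$, as the paper does, makes this issue disappear.

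Second, the ``hard part'' you identify in condition~(2) is not actually a gap in the paper's route, and your proposed workaround (``combine the non-inevitabilities of $\gS_\Gamma$ and $\gS^{na}$'') is too vague to assess and not obviously sound — the two non-inevitabilities in general produce two different witnessing extensions. The correct observation is that the definition of ``$\Gamma$ is $\gS_*$-inevitable'' in this paper allows $\Gamma$ to be an arbitrary subset of $\gS^{na}(M)$, not only of $\gS_*(M)$, and the proof of Fact~\ref{inev} only ever needs its inevitable sets to consist of non-algebraic types (so they can be realized in the chain); it never uses $\Gamma_M \subseteq \gS_*(M)$. Hence the contrapositive gives: over $N^*$ there is \emph{no} $\gS_*$-inevitable set of size $\leq \lambda^+$ at all. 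Applying this to the full set $\text{Bad}_i = \bigcup_{j<i}\{\gtp(c/N^*, N_j) : c \in |N_j| - |N^*|\}$, which has size $\leq \lambda^+$, yields an $N_i$ with an element realizing a $\gS_*$-type over $N^*$ while \emph{no} element of $|N_i|-|N^*|$ — of whatever type, in $\gS_*$ or not — realizes anything in $\text{Bad}_i$. That is exactly condition~(2), in one stroke and with a single type kind. Your instinct to worry about the $b$'s whose types lie outside $\gS_\Gamma(N^*)$ was sound, but the resolution is to read the inevitability definition carefully, not to bolt on a second type kind.

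Minor: your phrase ``the construction must halt at some stage'' is not how the contrapositive of Fact~\ref{inev} works — it is a pure existence argument, not an algorithm. You can get $N \lea N^*$ cleanly by restricting the type kind's domain to $\lea$-extensions of $N$ (as the paper does), so that every model quantified over in Fact~\ref{inev} already extends $N$.
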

\begin{proof} For every $M \in \K_\lambda$ such that $N \lea M$, let  $\gS_*(M)$ be the set of non-algebraic extensions of $\Gamma$ to $\gS^{na}(M)$.  Then it follows from  Fact \ref{inev}\footnote{The conditions here are slightly different as those of Fact \ref{inev}, as we only look at types over $\lea$-extensions of $N$ instead of types over all models in $\K_\lambda$. However  one can check that the proof of Theorem \ref{inev} goes through in this setting. } that for some $N\lea N^*\in \K_\lambda$, there is no $\gS_*$-inevitable set of types of size $\leq \lambda^+$; as otherwise $| \gS_*(N) | = |\Gamma| \leq \lambda^+$.

We build  $\langle N_i:i<\lambda^{++}\rangle$ by induction. Let $N_0$ be such that $N^*\lea N_0$, $\|N_0|-|N^*\|=\lambda$, and there is $a_0\in |N_0|-|N_i|$ realizing a type from $\gS_*(N^*)$. Let a realization of this type be $a_0$. At stage $i$, choose $N_i$ such that $N_i$ omits $\bigcup_{j<i}\{\type(c/N^*,N_j): c\in |N_j|-|N_*|\}$, $\|N_i|-|N^*\|=\lambda$, and $N_i$ realizes $p$ a type from $\gS_*(N^*)$. We can find such $N_i$ since the set is not $\gS_*$-inevitable. Let a realization of $p$ be $a_i$. 
\end{proof}

\end{document}